\journal{J. Theoret. Probab.}
\newtheorem{theorem}{Theorem}[section]
\newtheorem{proposition}[theorem]{Proposition}
\theoremstyle{definition}
\newtheorem{example}[theorem]{Example}
\newtheorem{remark}[theorem]{Remark}
\numberwithin{equation}{section}
\newtheorem{lemma}{Lemma}[section]
\journal{arXiv}
\begin{document}
\begin{CJK}{UTF8}{gbsn}
	
	\begin{frontmatter}
		
		\title{{Existence, uniqueness and comparison theorem on  unbounded solutions of general time interval BSDEs with sub-quadratic generators}\tnoteref{found}}
		\tnotetext[found]{Partially supported by National Natural Science Foundation of China (No. 12171471),  the Natural Science Foundation of Jiangsu Province (No. BK20231057), and  the Graduate Innovation Program of China University of Mining and Technology (No. 2023WLJCRCZL143).
			\vspace{0.2cm}}

		\author{Chuang Gu}
		\ead{guchuang1026@163.com}
		
		\author{Yan Wang}
		\ead{wangyan\_shuxue@163.com}
		
		\author{Shengjun Fan\corref{cor}}
		\ead{shengjunfan@cumt.edu.cn}
		
		\cortext[cor]{Corresponding author}
		
		\address {School of Mathematics,
			China University of Mining and Technology,
			Jiangsu 221116, P.R. China}
		 \vspace{-0.5cm}
		
		\vspace{0.2cm}
		\begin{abstract}
			This paper is devoted to the existence, uniqueness and comparison theorem on unbounded solutions of one-dimensional backward stochastic differential equations (BSDEs) with sub-quadratic generators, where the  terminal time is allowed to be finite or infinite. We first establish  existence of the unbounded solutions for this kind of BSDEs with generator $g$  satisfying a time-varying one-sided  linear growth  in the first unknown variable  $y$ and a time-varying sub-quadratic growth  in the second unknown variable $z$. Then, the  uniqueness and comparison theorem of the  unbounded solutions for this kind of BSDEs are proved under a time-varying extended convexity assumption. These results generalized those obtained in \cite{12} to the general time interval BSDEs.  Finally, several  sufficient conditions ensuring that the  uniqueness holds are put forward and verified via some innovative ideas, which are explored at the first time even though for the case of finite time interval BSDEs.
			\vspace{0.2cm}
		\end{abstract}
		
		\begin{keyword}
			Existence and uniqueness; Unbounded solutions; Backward stochastic differential equation;    \\  \hspace*{1.75cm}  Sub-quadratic growth;  Comparison theorem; General time interval. \vspace{0.2cm}
			
			\MSC[2024] 60H10\vspace{0.2cm}
		\end{keyword}
		
	\end{frontmatter}
	\vspace{-0.4cm}

\section{Introduction } \label{Title:1}
	Throughout this paper,  let $d$ be a positive integer and $(B_{t})_{t\geq 0}$ a standard $d$-dimensional Brownian motion defined on a complete probability space $(\Omega,\mathcal{F},\mathbb{P})$,  $(\mathcal{F}_{t})_{t\geq 0}$  the completed natural $\sigma$-algebra generated by $(B_{t})_{t\geq 0}$ and $\mathcal{F}_{T}:=\mathcal{F}$, where the  terminal time  $T$ is a positive real or $T=+\infty$. Let $|x|$ represent the Euclidian norm for $x\in \mathbb{R}^{d}$ and $x\cdot y$ the usual scalar inner product for $x,y\in \mathbb{R}^{d}$.  We consider the following one-dimensional backward stochastic differential equation (BSDE for short):
	\begin{equation}\label{eq1.1}
		Y_{t}=\xi +\int_{t}^{T}g(s,Y_{s},Z_{s})ds-\int_{t}^{T}Z_{s}\cdot dB_{s},~~~~~t\in [0,T], \tag{1.1}
	\end{equation}
	where the terminal value $\xi$ is an $\mathcal{F}_{T}$-measurable random variable, and the generator  $g(\omega, t, y, z): \Omega\times [0,T]\times \mathbb{R} \times \mathbb{R}^{d} \mapsto \mathbb{R}$ is an $(\mathcal{F}_{t})$-progressively measurable stochastic process for each $(y,z)$ and almost everywhere continuous  in $(y,z)$. The triple $(\xi, T, g)$ is called the parameters of BSDE $(\ref{eq1.1})$ and  $(Y_{t},Z_{t})_{t\in[0,T]}$ is a solution of BSDE $(\ref{eq1.1})$, which is a pair of $(\mathcal{F}_{t})$-progressively measurable processes taking values in $\mathbb{R}\times \mathbb{R}^{d}$  and verifying $(\ref{eq1.1})$. BSDE (\ref{eq1.1}) is usually denoted by BSDE ($\xi,g$).	General nonlinear BSDEs were initially studied in 1990 by Pardoux and Peng \cite{20}, which  proved an existence and uniqueness result for $L^{2}$ solutions of multidimensional BSDEs with square-integrable parameters and uniformly Lipschitz continuous generators. Since then, BSDEs have  captured much attention from a plenty of scholars.  These equations have also gradually turned into important  tools in various fields such as partial differential equations, mathematical finance, stochastic control, nonlinear mathematical expectation and so on.  The interested readers can see \cite{5,8,9,12,16,17,18,21,22,23} for more details.

	In this paper, we are concerned with  BSDE (\ref{eq1.1}) with generator $g$ satisfying the following condition:
	\begin{equation}\label{eq1.2}
		\tag{1.2}
		{\rm{d}}\mathbb{P}\times {\rm{d}}t-a.e.,~~~|g(\omega,t,y,z)|\leq f_{t}(\omega)+\beta(t)|y|+\gamma(t)|z|^{\alpha},~~~\forall~(y,z)\in \mathbb{R} \times \mathbb{R}^{d},
	\end{equation}
where $(f_{t})_{t\in[0,T]}$ is a nonnegative $(\mathcal{F}_{t})$-progressively measurable process, $\beta(\cdot)$ and $\gamma(\cdot)$ are two nonnegative deterministic functions depending only on the time variable $t$, and $\alpha\in(1,2)$.  It is well known that  $L^{2}$ solutions,  $L^{p}~(p>1)$ solutions,  $L^{1}$ solutions and bounded solutions for general time interval BSDEs have been extensively studied. For example, under the condition that the  generator $g$ satisfies (\ref{eq1.2}) with $\alpha=1$, Chen and Wang \cite{4} first proposed and investigated  existence and uniqueness  for the $L^{2}$ solution of a general time interval BSDE and Fan \cite{10}, Fan and Jiang \cite{14}, Zong and Hu \cite{25,26} established  existence and uniqueness for the $L^{p}~(p>1)$ solution of a general time interval  BSDE. Furthermore,  Fan \cite{10,11} addressed existence and uniqueness for the $L^{1}$ solution of a general time interval BSDE when generator $g$ satisfies  (\ref{eq1.2}) with $\alpha\in(0,1)$.   Fan \cite{10}, Luo and Fan \cite{19}  studied  existence and uniqueness for the bounded solution of a general time interval BSDE with a so-called quadratic generator $g$ satisfying  (\ref{eq1.2}) with $\alpha=2$, see  Kobylanski \cite{18} for the case of finite time interval  BSDEs. In particular, we would like to mention that Bahlali et al \cite{1} proposed an existence and uniqueness result for the $L^{2}$ solution  of a finite time interval  quadratic BSDE with generator $g$ taking the form $f(y)|z|^{2}$, where $f$ is required to be globally integrable on $\mathbb{R}$ so that   $f$ being  a positive constant is not sufficient to ensure the existence of a solution. The existence and uniqueness  of the $L^{p}~(p>1)$ solutions for this kind of quadratic BSDEs was further investigated  in Yang \cite{24}, where $f$ is globally integrable on $\mathbb{R}$ and bounded on any compact subset of $\mathbb{R}$.

On the other hand, Briand and Hu \cite{2}  put forward  the localization procedure  to obtain  existence for the unbounded solution of a finite time interval  quadratic BSDE with  terminal conditions $(\xi,f_{\cdot})$ admitting only a certain exponential moment, which is the weakest integrability condition  to  guarantee  existence  of the  unbounded solution. The uniqueness of the unbounded solution was established in  Briand and Hu \cite{3} for a finite time interval quadratic BSDE with convex generators and unbounded terminal conditions $(\xi,f_{\cdot})$  admitting every exponential moments  by utilizing the $\theta$-difference technique. Some further investigations can be found in Delbaen et al \cite{6,7} and Fan et al \cite{13} for the unbounded solution of a finite time interval quadratic BSDE.
Recently, by virtue of the localization procedure, Fan and Hu \cite{12} explored  existence  for the unbounded solution of a finite time interval BSDE with  terminal conditions $(\xi,f_{\cdot})$ admitting every sub-exponential moments when the generator $g$ satisfies  (\ref{eq1.2}) with $\alpha\in(1,2)$ and both $\beta(\cdot)$ and $\gamma(\cdot)$ are nonnegative constants,  and used the $\theta$-difference technique to establish uniqueness for the unbounded solution under an extended convexity condition of the sub-quadratic generator $g$, which is typically satisfied by  a locally Lipschitz perturbation of a proper convex function.  It should be noted that all of the above studies mentioned in this paragraph  have been made for a finite time interval  BSDE. A question arises naturally: do these results hold for a general time interval BSDE? The present paper will focus on whether the results of \cite{12} can be extended to a general time interval BSDE.

More especially, this paper is devoted to  existence, uniqueness and comparison theorem  for the unbounded solution of a general time interval BSDE with  sub-quadratic generator $g$ satisfying (\ref{eq1.2}) with $\alpha\in(1,2)$ and  terminal conditions $(\xi,f_{\cdot})$ admitting every  sub-exponential moments. Our results generalize the corresponding ones reported in Fan and Hu \cite{12} to the infinite time interval case. Firstly, in order to obtain  existence of the  unbounded solution to a general time interval BSDE, two new time-varying assumptions (EX1) and (EX2) on the generator $g$ are introduced, see the beginning of Section \ref{Title:3} for more details. They extend assumption (H1$^{\prime\prime}$) of Fan and Hu \cite{12} to the infinite time interval case. We would emphasis that the two deterministic functions $\beta(\cdot)$ and $\gamma(\cdot)$ in (EX1) and (EX2) are required to satisfy $\beta(\cdot)\in L^{1}([0,T];\mathbb{R}_{+})$ and $\gamma(\cdot)\in L^{\frac{2}{2-\alpha}}([0,T];\mathbb{R}_{+})$, which may be the most appropriate integrability conditions for them in order to guarantee existence of the unbounded solution. When $\alpha=1$, $\frac{2}{2-\alpha}$ equals to 2, this is just the case in  Chen and Wang \cite{4}, Fan \cite{10} and Zong and Hu \cite{25,26}. Our whole idea of the proof is first to establish a uniform a priori estimate on the first process $Y_{\cdot}$ in  any bounded solution $(Y_{\cdot},Z_{\cdot})$ of BSDE ($\xi,g$) by constructing a proper test function $\psi(s,x)$ and applying It$\rm{\hat{o}}$-Tanaka$^{\prime}$s formula to $\psi(s,\hat{Y}_{s})$, where $\hat{Y}_{\cdot}$ is defined in (\ref{eq3.4}), and then to verify   existence for the unbounded solution of BSDE ($\xi,g$) by using the  localization procedure proposed in Briand and Hu \cite{2}, see the first two steps of the proof of Theorem \ref{The:3.2} in Section \ref{Title:3} for more details. Secondly, in order to study a comparison theorem for the unbounded solution of a general time interval BSDE, we put forward a new time-varying extended convexity assumption (UN), see the beginning of Section \ref{Title:4.1} for details. It should be mentioned that due to the presence of the term $\gamma(t)\left[{\rm{ln}}(e+|z_{2}|)\right]^{\frac{\alpha^{*}}{2}}$, the assumption (UN)  not only extends  assumption (H2$^{\prime}$) of Fan and Hu \cite{12} to the infinite time interval case, but  also is strictly weaker than it even though for the finite time interval case. Our main idea of the proof is to use the $\theta$-difference technique put forward in Briand and Hu \cite{3}. However, due to the weaker assumption (UN), a new difficulty arises naturally and is successfully overcome, see the first step of the proof of Theorem \ref{The:4.2} in Section \ref{Title:4} for more details. Finally, we put forward several weaker sufficient conditions ensuring that the time-varying extended convexity assumption (UN) holds, which may allow a more general perturbation of a proper convex function than that reported in Fan and Hu \cite{12}, see Propositions \ref{Pro:2} and \ref{Pro:3}  together with  Examples \ref{Exa:2} and \ref{Exa:4} for more details. Some of these conditions are explored at the first time even though for the case of finite time interval BSDEs.

The remaining of this paper is organized as follows. In Section $\ref{Title:2}$, we introduce some  notations and an important lemma to be used later. In Section $\ref{Title:3}$, we put forward and prove an existence result for the unbounded solutions of general time interval BSDEs  with sub-quadratic generators. The main result is stated in Theorem \ref{The:3.2},  which generalizes  Theorem 2.2 in Fan and Hu \cite{12} to the case of general time interval BSDEs. In Section $\ref{Title:4}$, we  establish a  comparison theorem  for the unbounded solutions of general time interval BSDEs with sub-quadratic generators. This result is stated in Theorem \ref{The:4.2}, which can be seen as an extension of Theorem 3.2 in Fan and Hu \cite{12}. An existence and uniqueness result  of the unbounded solutions for this kind of BSDEs was further given  in Theorem \ref{The:4.3}, which generalizes Theorem 3.9 of Fan and Hu \cite{12} to the infinite time interval case.  In Section $\ref{Title:5}$, we put forward and prove three sufficient conditions ensuring that the uniqueness holds (see Propositions \ref{Pro:1}, \ref{Pro:2} and \ref{Pro:3}). Some remarks and  examples are also provided in the last three sections to illustrate the preceding results, see Remark \ref{Re:3.2} in Section $\ref{Title:3}$	together with Remark \ref{Re:4.1} in Section $\ref{Title:4}$ as well as  Remarks \ref{Re:5.1}, \ref{Re:5.2} and \ref{Re:5.3}, Examples \ref{Exa:2} and \ref{Exa:4} in Section $\ref{Title:5}$.  Finally, three technical lemmas used to prove Propositions \ref{Pro:2} and \ref{Pro:3}  are provided in Appendix, which are interesting in their own rights.

\section{Preliminaries} \label{Title:2}
	In this section, we will introduce some  notations and an important lemma to be used later. Let $a\vee b$ denote the maximum of two reals $a$ and $b$, $a^{+}:=a\vee 0$ and $a^{-}:=(-a)^{+}$. For $\alpha \in(1,2)$, let $\alpha^{*}$ stand for the conjugate  of $\alpha$, that is, $1/\alpha +1/\alpha^{*}=1$ or
	$$
	\alpha^{*}:=\frac{\alpha}{\alpha-1}>2.
	$$
	Denote $\mathbb{R}_{+}:=\left[0, +\infty\right)$, $\mathbb{R}_{-}:=\left(-\infty, 0\right]$,  ${\bf 1}_{A}=1$ when $x\in A$ otherwise 0, and ${\rm{sgn}}(x):={\bf 1}_{x>0}-{\bf 1}_{x\leq 0}$. We recall that an $(\mathcal{F}_{t})$-progressively measure scalar process $(X_{t})_{t\in[0,T]}$ belongs to class (D) if the family of random variables $\left\{X_{\tau}\right\}$, $\tau$ running all $(\mathcal{F}_{t})$-stopping times valued in $[0,T]$, is uniformly integrable. For any real $p\geq 1$, we state the following spaces:\vspace{0.2cm}
	
	$\bullet$ $L^{\infty}(\Omega,\mathcal{F}_{T},\mathbb{P})$ represents the set of all $\mathcal{F}_{T}$-measurable bounded random variables.

    $\bullet$ $\mathcal{S}^{\infty}(0,T;\mathbb{R})$ represents the set of all $(\mathcal{F}_{t})$-progressively measure and continuous bounded processes.

	$\bullet$ $\mathcal{M}^{p}(0,T;\mathbb{R}^{d})$ represents the set of  all
    $(\mathcal{F}_{t})$-progressively measurable $\mathbb{R}^{d}$-valued processes $(Z_{t})_{t\in[0,T]}$ such that
	$$
	\parallel Z\parallel _{\mathcal{M}^{p}}:=\left({\mathbb{E}}\left[\left( \int_{0}^{T}|Z_{t}|^{2}dt\right)^{\frac{p}{2}} \right]\right)^{\frac{1}{p}}<+\infty.
	$$
	
	$\bullet$ $L^{p}([0,T];\mathbb{R}_{+})$ represents the set of all nonnegative deterministic  functions $u(\cdot)$ such that
	$$
	\int_{0}^{T}u^{p}(t)dt<+\infty.
	$$

Let  $\alpha \in (1,2)$,  $\beta(\cdot)\in L^{1}([0,T];\mathbb{R}_{+})$, $\gamma(\cdot)\in  L^{\frac{2}{2-\alpha}}([0,T];\mathbb{R}_{+})$, and $(f_{t})_{t\in[0,T]}$ be a nonnegative $(\mathcal{F}_{t})$-progressively measurable  process having sub-exponential moments of any order, i.e.,
	$$
	{\mathbb{E}}\left[{\rm exp}\left(p\left(\int_{0}^{T}f_{s}ds \right)^{\frac{2}{\alpha^{*}}} \right)\right]<+\infty,~~~\forall~ p>1,
	$$
	which will be used throughout the whole paper.

The following Lemma \ref{Le:2.1} can be regarded as a direct consequence of  Lemma 3.4, Theorems 3.2 and 3.3 in Fan \cite{10}, which will play an important role in the proof of our main results.
	
	\begin{lemma}\label{Le:2.1}
		Assume that $\xi\in L^{\infty}(\Omega,\mathcal{F}_{T},\mathbb{P})$, and the generator $g$  satisfies the following two assumptions:
		
		\begin{enumerate}
			\renewcommand{\theenumi}{(H\arabic{enumi})}
			\renewcommand{\labelenumi}{\theenumi}
			
			\item\label{H1}   {\rm{d}}$\mathbb{P}\times {\rm{d}}t-a.e.$, $g(\omega,t,\cdot,\cdot):\mathbb{R}\times\mathbb{R}^{d}\mapsto \mathbb{R}$ is continuous;
			
			\item\label{H2} There exists  a function $u(t)\in L^{1}([0,T];\mathbb{R}_{+})$  such that {\rm{d}}$\mathbb{P}\times {\rm{d}}t-a.e.$, for each $(y,z)\in\mathbb{R}\times \mathbb{R}^{d}$,
			$$
			|g(\omega,t,y,z)|\leq u(t),
			$$	
		\end{enumerate}	
		then BSDE $(\xi,g)$ has  a minimal bounded solution $(Y_{t},Z_{t})_{t\in[0,T]}$ in  $\mathcal{S}^{\infty}(0,T;\mathbb{R})\times \mathcal{M}^{2}(0,T;\mathbb{R}^{d})$.
		
		Furthermore, assume that $\xi, \xi^{\prime}\in L^{\infty}(\Omega,\mathcal{F}_{T},\mathbb{P})$, both generators $g$ and $g^{\prime}$ satisfy (H1) and (H2), $(Y_{t},Z_{t})_{t\in[0,T]}$ and $(Y_{t}^{\prime},Z_{t}^{\prime})_{t\in[0,T]}$ are, respectively, the minimal  bounded solutions of BSDE ($\xi,g$) and BSDE ($\xi^{\prime},g^{\prime}$) in the space of $\mathcal{S}^{\infty}(0,T;\mathbb{R})\times \mathcal{M}^{2}(0,T;\mathbb{R}^{d})$. If $ \mathbb{P}-a.s.$, $\xi\leq\xi^{\prime}$, and
		$$
		{\rm{d}}\mathbb{P}\times {\rm{d}}t-a.e.,~g(t,Y_{t}^{\prime},Z_{t}^{\prime})\leq g^{\prime}(t,Y_{t}^{\prime},Z_{t}^{\prime})~~({\rm{resp.}}~ g(t,Y_{t},Z_{t})\leq g^{\prime}(t,Y_{t},Z_{t})),
		$$
		then $\mathbb{P}-a.s.$, for each $t\in[0,T]$, $Y_{t}\leq Y_{t}^{\prime}$.
	\end{lemma}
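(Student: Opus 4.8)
The plan is to reduce both assertions to the classical theory of Lipschitz BSDEs through a monotone (inf-convolution) approximation, and then to pass to the limit. The two analytic inputs I would take as given from Fan \cite{10} are: (i) existence and uniqueness of a bounded solution for a general time interval BSDE whose generator is Lipschitz in $(y,z)$ and dominated by a function in $L^{1}([0,T];\mathbb{R}_{+})$; and (ii) the comparison theorem for such Lipschitz BSDEs. The reason the general, possibly infinite, time interval causes no extra trouble is that assumption (H2) forces the uniform a priori bound $|Y_{t}|\le \|\xi\|_{\infty}+\int_{0}^{T}u(s)\,ds=:C$ on any bounded solution, obtained by taking conditional expectation in $(\ref{eq1.1})$ and using $|g|\le u$ with $u\in L^{1}$.

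For the existence of a minimal bounded solution, I would introduce the inf-convolutions
$$
g_{n}(\omega,t,y,z):=\inf_{(y',z')\in\mathbb{R}\times\mathbb{R}^{d}}\Big\{g(\omega,t,y',z')+n\big(|y-y'|+|z-z'|\big)\Big\}.
$$
Under assumption (H1) each $g_{n}$ is $n$-Lipschitz in $(y,z)$, satisfies $|g_{n}|\le u(t)$ (choosing $(y',z')=(y,z)$ gives the upper bound, and $g\ge -u$ gives the lower one), and increases pointwise to $g$. By input (i), BSDE $(\xi,g_{n})$ has a unique solution $(Y^{n},Z^{n})$, bounded by $C$ uniformly in $n$; by input (ii) and $g_{n}\le g_{n+1}$, the sequence $Y^{n}$ is nondecreasing, hence $Y^{n}\uparrow Y$ with $|Y|\le C$. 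To promote this to a genuine solution I would first apply It\^o's formula to $|Y^{n}|^{2}$ and use $|g_{n}|\le u$ together with the uniform bound on $Y^{n}$ to obtain $\sup_{n}\mathbb{E}\int_{0}^{T}|Z_{s}^{n}|^{2}\,ds<+\infty$, in which the finiteness of $\int_{0}^{T}u(s)\,ds$ is precisely what keeps the right-hand side finite on the general time interval. A companion estimate on $|Y^{n}-Y^{m}|^{2}$ then yields that $(Z^{n})$ is Cauchy in $\mathcal{M}^{2}(0,T;\mathbb{R}^{d})$; passing to the limit in $(\xi,g_{n})$ using assumption (H1) and dominated convergence (dominator $u$) produces a bounded solution $(Y,Z)$. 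Minimality is immediate: any bounded solution $(\bar{Y},\bar{Z})$ of $(\xi,g)$ is, because $g_{n}\le g$, a supersolution of $(\xi,g_{n})$, so $Y^{n}\le\bar{Y}$ and hence $Y\le\bar{Y}$.

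For the comparison assertion under the hypothesis $g(t,Y_{t}',Z_{t}')\le g'(t,Y_{t}',Z_{t}')$, I would note that the minimal solution $(Y',Z')$ of $(\xi',g')$ is automatically a supersolution of $(\xi',g)$: substituting the pointwise inequality into its own equation gives
$$
Y_{t}'=\xi'+\int_{t}^{T}g'(s,Y_{s}',Z_{s}')\,ds-\int_{t}^{T}Z_{s}'\cdot dB_{s}\ge \xi'+\int_{t}^{T}g(s,Y_{s}',Z_{s}')\,ds-\int_{t}^{T}Z_{s}'\cdot dB_{s}.
$$
Since $g_{n}\le g$, $(Y',Z')$ is then a supersolution of each Lipschitz BSDE $(\xi',g_{n})$, so input (ii) together with $\xi\le\xi'$ gives $Y^{n}\le Y'$ for all $n$, and letting $n\to\infty$ yields $Y\le Y'$. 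The ``resp.'' hypothesis $g(t,Y_{t},Z_{t})\le g'(t,Y_{t},Z_{t})$ is handled by the mirror-image argument: there the minimal solution $(Y,Z)$ of $(\xi,g)$ is a subsolution of $(\xi',g')$, and one compares it against the minimal solution $(Y',Z')$ using the corresponding subsolution form of the comparison theorem from Fan \cite{10}.

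I expect the main obstacle to be the $\mathcal{M}^{2}$-convergence step in the existence part: both the uniform bound on $\|Z^{n}\|_{\mathcal{M}^{2}}$ and the Cauchy property must be extracted from the mere integrability $\int_{0}^{T}u(s)\,ds<+\infty$ rather than from boundedness of $u$, so the It\^o and Burkholder--Davis--Gundy estimates have to be organized so that only this $L^{1}$ control of the generator enters. A secondary subtlety is the ``resp.'' direction of the comparison, where the relevant one-sided inequality is evaluated at $(Y,Z)$ rather than $(Y',Z')$, forcing a subsolution rather than a supersolution comparison against the minimal solution.
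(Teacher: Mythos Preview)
The paper does not actually prove Lemma~\ref{Le:2.1}; it simply records that the statement is a direct consequence of Lemma~3.4 and Theorems~3.2--3.3 in Fan~\cite{10}. Your proposal therefore goes well beyond what the paper offers, supplying an explicit inf-convolution scheme rather than a bare citation. For the existence part and for the first form of the comparison (hypothesis evaluated at $(Y',Z')$), your argument is the standard one and is correct in outline; this is presumably close to what Fan~\cite{10} does.

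One technical point in the existence step deserves care when $T=+\infty$. Your approximants $g_{n}$ carry a \emph{constant} Lipschitz weight $n$, and the general-time-interval Lipschitz theory you invoke as input~(i) requires integrable-in-time coefficients; the extra bound $|g_{n}|\le u$ does not by itself yield a contraction. The clean fix is to weight the inf-convolution by a strictly positive $\rho(\cdot)\in L^{1}\cap L^{2}([0,T];\mathbb{R}_{+})$, setting $g_{n}(t,y,z):=\inf_{(y',z')}\{g(t,y',z')+n\rho(t)(|y-y'|+|z-z'|)\}$; then $g_{n}$ is $n\rho(t)$-Lipschitz, still satisfies $|g_{n}|\le u$ and $g_{n}\uparrow g$, and the standard infinite-interval theory applies.

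Your handling of the ``resp.'' case, however, does not go through, and in fact the assertion as stated (for \emph{minimal} solutions) appears to be false. You argue that the minimal solution $(Y,Z)$ of $(\xi,g)$ is a subsolution of $(\xi',g')$ and then invoke a ``subsolution $\le$ minimal solution'' principle from Fan~\cite{10}. That principle fails for merely continuous generators: with $T=1$, $d=1$, $\xi=\xi'=0$, $g\equiv 0$ and $g'(t,y,z)=-(\sqrt{|y|}\wedge 1)$, one has $Y\equiv 0$ (the unique, hence minimal, solution of $(\xi,g)$), while the minimal solution of $(\xi',g')$ is $Y'_{t}=-(1-t)^{2}/4$; the ``resp.'' hypothesis $g(t,Y_{t},Z_{t})=0\le 0=g'(t,Y_{t},Z_{t})$ holds, yet $Y_{t}>Y'_{t}$ for $t<1$. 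What your mirror-image argument would actually establish is the comparison for \emph{maximal} solutions (via sup-convolutions). Note that the paper only uses the first form of the comparison (in Step~2 of Theorem~\ref{The:3.2}, where $g^{n,q}\le g^{n+1,q}$ pointwise), so this issue with the ``resp.'' clause does not affect the downstream results.
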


	\section{Existence of the  unbounded solution}\label{Title:3}
Let us first introduce the following assumptions on the generator $g$, which can be compared with  assumption $(\rm{H1^{\prime\prime}})$ in Fan and Hu \cite{12}.
	
	\begin{enumerate}
		\renewcommand{\theenumi}{(EX\arabic{enumi})}
		\renewcommand{\labelenumi}{\theenumi}
		
		\item\label{EX1}  $g$ has a time-varying one-sided  linear growth in $y$ and a time-varying sub-quadratic growth in  $z$, i.e.,  ${\rm{d}}\mathbb{P}\times {\rm{d}}t-a.e.,$ for each $(y,z)\in \mathbb{R}\times \mathbb{R}^{d}$,
		$$
		{\rm{sgn}}(y)g(\omega,t,y,z)\leq f_{t}(\omega)+\beta(t)|y|+\gamma(t)|z|^{\alpha}.
		$$
		
		\item\label{EX2} {\rm{d}}$\mathbb{P}\times {\rm{d}}t-a.e.$, $g(\omega,t,\cdot,\cdot):\mathbb{R}\times\mathbb{R}^{d}\mapsto \mathbb{R}$ is continuous, and $g$ has a time-varying general growth in $y$ and a quadratic growth in $z$, i.e., there exists a real constant $c>0$ such that ${\rm{d}}\mathbb{P}\times {\rm{d}}t-a.e.,$ for each $(y,z)\in \mathbb{R}\times \mathbb{R}^{d}$,
		$$
		|g(\omega,t,y,z)|\leq f_{t}(\omega)+\beta(t)\psi (|y|)+c|z|^{2},
		$$
		where $\psi(\cdot) $ is a nonnegative, increasing, continous  function and satisfies $\psi(0)=0$.
	\end{enumerate}

	The following Theorem \ref{The:3.2} presents an existence theorem  for the unbounded solution of a general time interval BSDE  under assumptions  (EX1) and (EX2), which is the main result of this section.

	\begin{theorem}\label{The:3.2}
		Assume that  $\xi$ is a terminal value, and $g$ is a generator which  satisfies assumptions  (EX1) and (EX2). If
		$|\xi|+\int_{0}^{T}f_{t}dt$  has sub-exponential moments of any order, i.e.,
		\begin{equation}\label{eq3.1}
		{\mathbb{E}}\left[{\rm exp}\left(p\left(|\xi|+\int_{0}^{T}f_{s}ds \right)^{\frac{2}{\alpha^{*}}} \right)\right]<+\infty,~~~ \forall~ p>1,\tag{3.1}
		\end{equation}
		then BSDE $(\xi,g)$ admits a solution $(Y_{t},Z_{t})_{t\in[0,T]}$ such that $\mathbb{P}-a.s.$,
		\begin{equation}\label{eq3.2}
			\tag{3.2}
			\begin{aligned}
				&{\rm exp}\left(|Y_{t}|^{\frac{2}{\alpha^{*}}}\right)+{\mathbb{E}}_{t}\left[\int_{t}^{T}|Z_{s}|^{2}ds \right]\\
               &~\leq K {\mathbb{E}}_{t}\left[{\rm exp}\left(K\left(|\xi|+\int_{t}^{T}f_{s}ds \right)^{\frac{2}{\alpha^{*}}}\right) \right],~~~t\in[0,T],
			\end{aligned}
		\end{equation}
		where $K>0$ is a constant depending only on $(\alpha,T,\beta(\cdot),\gamma(\cdot))$.
		Furthermore, for each $p>1$, there exists a constant $K_{p}>0$ depending only on  $(\alpha,T,p,\beta(\cdot),\gamma(\cdot))$ such that $\mathbb{P}-a.s.$,
		\begin{equation}\label{eq3.3}
			\tag{3.3}
			\begin{aligned}	
				&{\mathbb{E}}\left[{\rm exp}\left(p\left(\sup\limits_{s\in[t,T]}|Y_{s}|\right) ^{\frac{2}{\alpha^{*}}} \right) \right] +{\mathbb{E}}\left[\left(\int_{t}^{T}|Z_{s}|^{2}ds \right)^{\frac{p}{2}} \right]\\
				&~\leq K_{p}{\mathbb{E}}\left[{\rm exp}\left(K_{p}\left(|\xi|+\int_{t}^{T}f_{s}ds \right)^{\frac{2}{\alpha^{*}}} \right)\right],~~~t\in[0,T].
			\end{aligned}
		\end{equation}
	\end{theorem}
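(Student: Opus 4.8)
The plan is to follow the two-step localization scheme of Briand and Hu~\cite{2}: first prove that the a priori bound \eqref{eq3.2} holds for \emph{every} bounded solution with $K$ depending only on $(\alpha,T,\beta(\cdot),\gamma(\cdot))$, and then construct the unbounded solution as a limit of bounded solutions furnished by Lemma~\ref{Le:2.1}; here (EX1) powers the estimate while (EX2), after truncation, supplies the bounded approximants. For the estimate, fix a bounded solution $(Y,Z)$ and $t\in[0,T]$, set $p:=2/\alpha^{*}\in(0,1)$, and work on $[t,T]$ with the nonnegative auxiliary process $\hat Y$ of \eqref{eq3.4} (which combines $|Y|$ with a running integral of $f$ chosen to cancel the $f_{s}$-term). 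The It\^o--Tanaka formula together with (EX1) gives
$$
d\hat Y_{s}\ \ge\ -\bigl(\beta(s)|Y_{s}|+\gamma(s)|Z_{s}|^{\alpha}\bigr)\,ds+{\rm sgn}(Y_{s})\,Z_{s}\cdot dB_{s},
$$
the nonnegative local-time term being discarded, and I would then apply It\^o's formula to $\psi(s,\hat Y_{s})$ for an exponential test function $\psi(s,x)=\exp\bigl(a(s)\,x^{p}\bigr)$ with $a(\cdot)>0$ deterministic, to be chosen.

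The resulting drift of $\psi(s,\hat Y_{s})$ contains the favourable convexity term $\tfrac12\psi_{xx}|Z_{s}|^{2}$, the dangerous sub-quadratic term $-\psi_{x}\gamma(s)|Z_{s}|^{\alpha}$, the linear term $-\psi_{x}\beta(s)|Y_{s}|$, and the time-derivative $\psi_{s}$. By Young's inequality with conjugate exponents $2/\alpha$ and $2/(2-\alpha)$,
$$
\psi_{x}\gamma(s)|Z_{s}|^{\alpha}\ \le\ \tfrac{\alpha}{2}\,\epsilon^{2/\alpha}\,\psi_{x}|Z_{s}|^{2}+\tfrac{2-\alpha}{2}\,\epsilon^{-2/(2-\alpha)}\,\psi_{x}\,\gamma(s)^{2/(2-\alpha)},\qquad \epsilon>0,
$$
so the genuinely quadratic part is absorbed by $\tfrac12\psi_{xx}|Z_{s}|^{2}$ as soon as $\psi_{xx}\ge\alpha\,\epsilon^{2/\alpha}\psi_{x}$; the exponent $2/(2-\alpha)$ is exactly why $\gamma\in L^{2/(2-\alpha)}$ is the right integrability class, and retaining a fixed fraction of the quadratic term also yields the $\mathbb{E}_{t}\!\int|Z|^{2}$ bound. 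Choosing $\log a$ to grow like $p\int_{0}^{\cdot}\beta$ makes $\psi_{s}$ dominate the linear term (using $|Y_{s}|\le\hat Y_{s}$ and $x\psi_{x}=p(\log\psi)\psi$), and $\beta\in L^{1}$ keeps $a(\cdot)$ bounded. After these cancellations the drift is at most a deterministic integrable remainder, and taking $\mathbb{E}_{t}$ (the stochastic integral being a true martingale since $Y$ is bounded and $Z\in\mathcal M^{2}$) produces the first half of \eqref{eq3.2}.

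I expect the construction of $\psi$ to be the main obstacle. Since $p\in(0,1)$, the power $x^{p}$ is concave and non-smooth at $x=0$, so $\psi_{xx}$ changes sign (concave near the origin, convex for large $x$) and the absorption inequality $\psi_{xx}\ge\alpha\,\epsilon^{2/\alpha}\psi_{x}$ can fail for small $x$. Coping with this — invoking It\^o--Tanaka to handle the non-smoothness, regularizing or shifting the power near $0$ by a convex majorant, and verifying that the small-$|Y|$ regime contributes only a bounded error — is the delicate heart of the estimate; it is precisely the role of the carefully chosen weight $a(\cdot)$.

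For existence I would truncate: put $\xi^{n}=(\xi\wedge n)\vee(-n)$ and replace $g$ by bounded generators $g_{n}$ (truncating $f$, $y$ and $z$) so that each pair $(\xi^{n},g_{n})$ satisfies (H1)--(H2); Lemma~\ref{Le:2.1} then yields bounded minimal solutions $(Y^{n},Z^{n})$, which can be arranged monotone in $n$ by its comparison part. The a priori estimate applies to each with constants independent of $n$, so $\{Y^{n}\}$ is uniformly controlled in the sense of \eqref{eq3.2} and $\{Z^{n}\}$ is bounded in $\mathcal M^{2}$. The localization procedure of \cite{2} — monotone limit of $Y^{n}$, local convergence of $Z^{n}$ on stopping-time intervals, and passage to the limit in the truncated equations — then produces a solution $(Y,Z)$ of BSDE $(\xi,g)$ inheriting \eqref{eq3.2}. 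Finally \eqref{eq3.3} follows from \eqref{eq3.2}: Doob's maximal inequality upgrades the pointwise control of $|Y_{s}|$ to control of $\sup_{s\in[t,T]}|Y_{s}|$, while the Burkholder--Davis--Gundy inequality converts the $\mathbb{E}_{t}\!\int|Z|^{2}$ bound into the $\mathcal M^{p}$ bound, with the moment hypothesis \eqref{eq3.1} guaranteeing finiteness of the right-hand side.
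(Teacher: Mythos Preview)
Your strategy matches the paper's: apply It\^o--Tanaka to $\psi(s,\hat Y_s)$ with $\psi(s,x)=\exp\bigl(\mu(s)x^{2/\alpha^*}\bigr)$, use Young's inequality to split $|Z|^{\alpha}$, absorb the quadratic piece into $\tfrac12\psi_{xx}|Z|^{2}$, then Doob and BDG for \eqref{eq3.3}, and Briand--Hu localization for existence. Two technical points need sharpening.

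First, the small-$x$ convexity defect is \emph{not} cured by the time weight. In \eqref{eq3.4} the auxiliary process carries an additive constant $k=\bigl((\alpha^*)^{2}+\alpha^*\bigr)^{\alpha^*/2}$, so that $\hat Y_s\ge k$ always sits in the region where $x\mapsto\psi(s,x)$ is strictly convex; it is this shift, not $a(\cdot)$, that rescues $\psi_{xx}>0$. The weight $\mu(\cdot)$ serves a different purpose: after Young's inequality the residual term is, up to constants, $\psi(s,\hat Y_s)\,\hat Y_s^{2/\alpha^*}\gamma(s)^{2/(2-\alpha)}$, which is not a ``deterministic integrable remainder'' but must be cancelled by $\psi_s=\mu'(s)\hat Y_s^{2/\alpha^*}\psi$; this forces the choice \eqref{eq3.6} with $\mu'/\mu\propto e^{2A}\gamma^{2/(2-\alpha)}$, and is precisely where $\gamma\in L^{2/(2-\alpha)}$ enters. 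The $\beta$-term, on the other hand, is handled not by $\mu$ but by the factor $e^{A(s)}=\exp\int_0^s\beta$ already built into $\hat Y$ in \eqref{eq3.4}; with that factor present your displayed inequality for $d\hat Y_s$ should have no $\beta(s)|Y_s|$ on the right and should carry $e^{A(s)}$ in front of the remaining terms.

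Second, a one-parameter truncation $\xi^{n}=(\xi\wedge n)\vee(-n)$ does not produce a monotone family of minimal solutions, since raising $n$ enlarges both the positive and negative parts simultaneously and Lemma~\ref{Le:2.1} gives no comparison. The paper uses a two-parameter scheme $\xi^{n,q}=\xi^{+}\wedge n-\xi^{-}\wedge q$ and $g^{n,q}=g^{+}\wedge(ne^{-t})-g^{-}\wedge(qe^{-t})$, so that $Y^{n,q}$ is non-decreasing in $n$ and non-increasing in $q$; the solution is then $Y=\inf_q\sup_n Y^{n,q}$.
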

	
	\begin{proof} The whole proof is divided into the following two steps.
		
		{\bf  Step 1.} We prove that inequalities (\ref{eq3.2}) and (\ref{eq3.3}) hold for any bounded solution $(Y_{t},Z_{t})_{t\in[0,T]}$ of BSDE $(\xi,g)$ when $|\xi|\leq \bar{c}$ and $|f_{t}|\leq \bar{c}e^{-t}$ hold for each $t\in[0,T]$ and some constant $\bar{c}>0$. The proof is mainly enlightened by Lemma 2.7 in Fan et al \cite{15}. Let $t\in[0,T]$ be fixed. Define the test function
		$$
		\psi(s,x):={\rm exp}\left(\mu(s)x^{\frac{2}{\alpha^{*}}}\right),~~~(s,x)\in[t,T]\times \left[x_{0},+\infty\right),
		$$
		where  $x_{0}> \left({\alpha^{*}}/{2}\right)^{{\alpha^{*}}/{2}}$ and $\mu(\cdot):[t,T]\rightarrow \mathbb{R}_{+}$ is a undetermined deterministic function satisfying $\mu(\cdot)\geq 1$ and $\mu^{\prime}(\cdot)>0$. Note that the larger the $\alpha^{*}$, the smaller the $\psi(s,x)$. A simple computation gives that for each $(s,x)\in[t,T]\times \left[x_{0},+\infty\right)$,
		$$
		\begin{aligned}
			&\psi_{s}(s,x)=\psi(s,x)\mu^{\prime}(s)x^{\frac{2}{\alpha^{*}}}>0\\
			&\psi_{x}(s,x)=\psi(s,x)\mu(s)\frac{2}{\alpha^{*}}x^{\frac{2}{\alpha^{*}}-1}>0\\
			&\psi_{xx}(s,x)=\psi(s,x)\mu(s)\frac{2}{\alpha^{*}}x^{\frac{2}{\alpha^{*}}-2}\left[\frac{2}{\alpha^{*}}\mu(s)x^{\frac{2}{\alpha^{*}}}-\left(1-\frac{2}{\alpha^{*}}\right)  \right]\\
			&~~~~~~~~~~~~\geq \psi(s,x)\mu(s)\frac{2}{\alpha^{*}}x^{\frac{2}{\alpha^{*}}-2}\left[\frac{2}{\alpha^{*}} x_{0}^{\frac{2}{\alpha^{*}}}-1\right]>0.
		\end{aligned}
		$$
	    For such a case, $x\mapsto \psi(s,x)$ is strictly convex on $\left[x_{0},+\infty\right)$. Note that $\lim\limits_{\alpha^{*}\rightarrow +\infty}\left({\alpha^{*}}/{2}\right)^{{\alpha^{*}}/{2}}=+\infty$. When $\alpha^{*}$ is getting larger, $x_{0}$ is getting larger. Define $A(s):=\int_{0}^{s}\beta(r)dr$ and
		\begin{equation}\label{eq3.4}
			\tag{3.4}
		\hat{Y}_{s}:=e^{A(s)}|Y_{s}|+k+\int_{t}^{s}e^{A(r)}f_{r}dr, ~~s\in[t,T],
	    \end{equation}
		where $k $ is a undetermined constant depending only on $\alpha$ bigger than $x_{0}$. Let $L(\cdot)$ denote the local time of $Y(\cdot)$ at 0. In view of assumption (EX1), applying It$\rm{\hat{o}}$-Tanaka$^{\prime}$s formula to $\psi(s,\hat{Y}_{s})$, we have
	    $$
		\begin{aligned}
			d\psi(s,\hat{Y}_{s})
			&=\left[\psi_{s}(s,\hat{Y}_{s})+\psi_{x}(s,\hat{Y}_{s})e^{A(s)}\left(-{\rm{sgn}}(Y_{s})g(s, Y_{s},Z_{s})+\beta(s)|Y_{s}|+f_{s} \right)\right.\\
			&~~~+\left.\frac{1}{2}\psi_{xx}(s,\hat{Y}_{s})e^{2A(s)}|Z_{s}|^{2}  \right]ds
			+\psi_{x}(s,\hat{Y}_{s})dL_{s}+\psi_{x}(s,\hat{Y}_{s})e^{A(s)}{\rm{sgn}}(Y_{s})Z_{s}\cdot dB_{s}\\
			&\geq \psi(s,\hat{Y}_{s}) \left[{\mu^{\prime}(s) {\hat{Y}_{s}}^{\frac{2}{\alpha^{*}}}-\frac{2}{\alpha^{*}}\mu(s){\hat{Y}_{s}}^{\frac{2}{\alpha^{*}}-1}e^{A(s)}\gamma(s)|Z_{s}|^{\alpha}}\right.\\ &~~~+\left.{\frac{1}{\alpha^{*}}\mu(s){\hat{Y}_{s}}^{\frac{2}{\alpha^{*}}-2}e^{2A(s)}\left(\frac{2}{\alpha^{*}}\mu(s){\hat{Y}_{s}}^{\frac{2}{\alpha^{*}}}
				-\left(1-\frac{2}{\alpha^{*}}\right)  \right)|Z_{s}|^{2}} \right]ds \\
			&~~~+\psi_{x}(s,\hat{Y}_{s})e^{A(s)}{\rm{sgn}}(Y_{s})Z_{s}\cdot dB_{s}, ~s\in[t,T].\\
		\end{aligned}
		$$
		For each $r\in[t,T]$ and each integer $m\geq 1$,  we define the following stopping time:
		$$
		\sigma_{m}^{r}:={\rm{inf}}\left\{s\in[r,T]: \int_{r}^{s}\left(\psi_{x}(\tau,\hat{Y}_{\tau}) \right)^{2}e^{2A(\tau)}|Z_{\tau}|^{2}d\tau\geq m \right\} \wedge T
		$$
		with the convention  ${\rm{inf}}~\emptyset=+\infty$. Then, by integrating on $[r,\sigma_{m}^{r}]$, we get that for each $m\geq 1$,
		$$
		\begin{aligned}
			\psi(r,\hat{Y}_{r})+X_{r}^{m} &\leq\psi(\sigma_{m}^{r},\hat{Y}_{\sigma_{m}^{r}})+\int_{r}^{\sigma_{m}^{r}}\psi(s,\hat{Y}_{s}) \left[{-\mu^{\prime}(s) {\hat{Y}_{s}}^{\frac{2}{\alpha^{*}}}+\frac{2}{\alpha^{*}}\mu(s){\hat{Y}_{s}}^{\frac{2}{\alpha^{*}}-1}e^{2A(s)}\gamma(s)|Z_{s}|^{\alpha}}\right.\\
			&~~~\left.{-\frac{1}{\alpha^{*}}\mu(s){\hat{Y}_{s}}^{\frac{2}{\alpha^{*}}-2}e^{2A(s)}\left(\frac{2}{\alpha^{*}}\mu(s){\hat{Y}_{s}}^{\frac{2}{\alpha^{*}}}-\left(1-\frac{2}{\alpha^{*}}\right)  \right)|Z_{s}|^{2}} \right]ds\\
			&\leq\psi(\sigma_{m}^{r},\hat{Y}_{\sigma_{m}^{r}})+\int_{r}^{\sigma_{m}^{r}}\psi(s,\hat{Y}_{s}) \left[{-\mu^{\prime}(s) {\hat{Y}_{s}}^{\frac{2}{\alpha^{*}}}+\frac{1}{\alpha^{*}}\mu(s){\hat{Y}_{s}}^{\frac{2}{\alpha^{*}}-2}}e^{2A(s)}\bigg(2\gamma(s)\hat{Y}_{s}|Z_{s}|^{\alpha}\bigg.\right.\\
			&~~~~~~~~~~~~~~~~~~~~~~~~~~~~~~~~~~~~~~~~~~\left.{\left.
				-\frac{2}{\alpha^{*}}\mu(s){\hat{Y}_{s}}^{\frac{2}{\alpha^{*}}}|Z_{s}|^{2}+  |Z_{s}|^{2}\right) } \right]ds,~~~r\in[t,T],
		\end{aligned}
		$$
		where
		$$
		X_{r}^{m}:=\int_{r}^{\sigma_{m}^{r}}\psi_{x}(s,\hat{Y}_{s})e^{A(s)}{\rm{sgn}}(Y_{s})Z_{s}\cdot dB_{s}.
		$$
		Using Young$^{\prime}$s inequality, we have
		$$
			\begin{aligned}
				2\gamma(s)\hat{Y}_{s}|Z_{s}|^{\alpha}=\left(\hat{Y}_{s}^{\frac{2}{\alpha^{*}}}|Z_{s}|^{2} \right)^{\frac{\alpha}{2}}\left(2\gamma(s)\hat{Y}_{s}^{1-\frac{\alpha}{\alpha^{*}}}\right)\leq \frac{1}{\alpha^{*}}{\hat{Y}_{s}}^{\frac{2}{\alpha^{*}}}|Z_{s}|^{2}+\widehat{K}\gamma^{\frac{2}{2-\alpha}}(s) \hat{Y}_{s}^{2}
			\end{aligned}
		$$
		with
		$$
		\widehat{K}:=(2-\alpha)\left(\frac{\alpha-1}{\alpha^{2}} \right)^{\frac{-\alpha}{2-\alpha}}>0.
		$$
		 Noticing $\mu(s)\geq 1$ and $\hat{Y}_{s}\geq k$, we choose $k:= \left((\alpha^{*})^{2}+\alpha^{*} \right)^{\frac{\alpha^{*}}{2}}>x_{0}$ such that ${\hat{Y}_{s}}^{\frac{2}{\alpha^{*}}}\geq 2{\rm{ln}}{\hat{Y}_{s}}$ and then
		$$
		\psi(s,\hat{Y}_{s}) {\hat{Y}_{s}}^{\frac{2}{\alpha^{*}}-2}\geq {\rm exp}\left(\hat{Y}_{s}^{\frac{2}{\alpha^{*}}}\right){\hat{Y}_{s}}^{\frac{2}{\alpha^{*}}-2}  \geq1.
		$$
		According to the last several inequalities,  we get that for each $m\geq 1$ and $r\in[t,T]$,
		\begin{equation}\label{eq3.5}
			\tag{3.5}
			\begin{aligned}
				&\psi(r,\hat{Y}_{r})+X_{r}^{m}\\
				&~\leq\psi(\sigma_{m}^{r},\hat{Y}_{\sigma_{m}^{r}})+\int_{r}^{\sigma_{m}^{r}}\left\{\psi(s,\hat{Y}_{s}) {\hat{Y}_{s}}^{\frac{2}{\alpha^{*}}}\left[{-\mu^{\prime}(s) +\frac{\widehat{K}}{\alpha^{*}}e^{2A(s)}\gamma^{\frac{2}{2-\alpha}}(s)\mu(s)}\right]\right.\\
				&~~~~~~~~~~~~~~~~~~~~~~~~~~~~~~~~~\left.-\frac{\mu(s)}{\alpha^{*}}\psi(s,\hat{Y}_{s}) {\hat{Y}_{s}}^{\frac{2}{\alpha^{*}}-2}e^{2A(s)}\left(
				\frac{1}{\alpha^{*}}{\hat{Y}_{s}}^{\frac{2}{\alpha^{*}}}-1  \right)|Z_{s}|^{2}  \right\}ds\\
				&~\leq \psi(\sigma_{m}^{r},\hat{Y}_{\sigma_{m}^{r}})+\int_{r}^{\sigma_{m}^{r}}\left\{\psi(s,\hat{Y}_{s}) {\hat{Y}_{s}}^{\frac{2}{\alpha^{*}}}\left[{-\mu^{\prime}(s) +\frac{\widehat{K}}{\alpha^{*}}e^{2A(s)}\gamma^{\frac{2}{2-\alpha}}(s)\mu(s)}\right]\right.\\
				&~~~~~~~~~~~~~~~~~~~~~~~~~~~~~~~~~-\left.
				\frac{1}{\alpha^{*}}\left(
				\frac{1}{\alpha^{*}}{k}^{\frac{2}{\alpha^{*}}}-1  \right)|Z_{s}|^{2}  \right\}ds,\\
				&~\leq \psi(\sigma_{m}^{r},\hat{Y}_{\sigma_{m}^{r}}) +\int_{r}^{\sigma_{m}^{r}}\left\{\psi(s,\hat{Y}_{s}) {\hat{Y}_{s}}^{\frac{2}{\alpha^{*}}}\left[{-\mu^{\prime}(s) +\frac{\widehat{K}}{\alpha^{*}}e^{2A(s)}\gamma^{\frac{2}{2-\alpha}}(s)\mu(s)}\right]-|Z_{s}|^{2}  \right\}ds.\\	
			\end{aligned}
		\end{equation}
		Now, we pick
		\begin{equation}\label{eq3.6}
			\tag{3.6}
			\mu(s):= \mu(0){\rm exp}\left({\frac{\widehat{K}}{\alpha^{*}}\int_{0}^{s}e^
				{2A(r)}\gamma^{\frac{2}{2-\alpha}}(r)dr }\right) ~~{\rm{and}}~~  \mu(0)\geq 1,~~~s\in[t,T].
		\end{equation}
		Then,  in view of ${\mathbb{E}}_{r}[X_{r}^{m}]=0$, by taking the conditional mathematical expectation with respect to $\mathcal{F}_{r}$ in $(\ref{eq3.5})$, we can obtain that for each $m\geq 1$ and $r\in[t,T]$,
		\begin{equation}\label{eq3.7}
			\psi(r,\hat{Y}_{r})	+{\mathbb{E}}_{r}\left[\int_{r}^{\sigma_{m}^{r}}|Z_{s}|^{2}ds \right]\leq {\mathbb{E}}_{r}\left[\psi(\sigma_{m}^{r},\hat{Y}_{\sigma_{m}^{r}}) \right].\tag{3.7}
		\end{equation}
		Furthermore, by Fatou$^{\prime}$s lemma and Lebesgue$^{\prime}$s dominated convergence theorem, as well as the boundedness of $Y_{\cdot}$ and $f_{\cdot}$, sending $m$ to infinity in $(\ref{eq3.7})$, we get
		\begin{equation}\label{eq3.8}
			\tag{3.8}
			\begin{aligned}
				&{\rm exp}\left(\mu(r)\left(e^{A(r)}|Y_{r}|+k+\int_{t}^{r}e^{A(s)}f_{s}ds \right)^{\frac{2}{\alpha^{*}}}\right)+{\mathbb{E}}_{r}\left[\int_{r}^{T}|Z_{s}|^{2}ds \right]\\
				&~\leq {\mathbb{E}}_{r}\left[{\rm exp}\left(\mu(T)\left(|\xi|e^{A(T)}+k+\int_{t}^{T}e^{A(s)}f_{s}ds \right)^{\frac{2}{\alpha^{*}}}\right) \right],~r\in[t,T].
			\end{aligned}
		\end{equation}
		Letting $r=t$ in the last inequality, we deduce that for each $t\in[0,T]$,
		\begin{equation}\label{eq3.9}
			\tag{3.9}
			\begin{aligned}
				&{\rm exp}\left(\mu(t)\left(e^{A(t)}|Y_{t}|+k \right)^{\frac{2}{\alpha^{*}}}\right)+{\mathbb{E}}_{t}\left[\int_{t}^{T}|Z_{s}|^{2}ds \right]\\
				&~\leq {\mathbb{E}}_{t}\left[{\rm exp}\left(\mu(T)\left(|\xi|e^{A(T)}+k+\int_{t}^{T}e^{A(s)}f_{s}ds \right)^{\frac{2}{\alpha^{*}}}\right) \right],
			\end{aligned}
		\end{equation}
		which yields that
		\begin{equation}\label{eq3.10}
			\tag{3.10}
			\begin{aligned}
				&~~~~{\rm exp}\left(|Y_{t}|^{\frac{2}{\alpha^{*}}}\right)+{\mathbb{E}}_{t}\left[\int_{t}^{T}|Z_{s}|^{2}ds \right]\leq K {\mathbb{E}}_{t}\left[{\rm exp}\left(K\left(|\xi|+\int_{t}^{T}f_{s}ds \right)^{\frac{2}{\alpha^{*}}}\right) \right]
			\end{aligned}
		\end{equation}
		with
		$$
		K:=\left({\rm{exp}}\left(\mu(T)k^{\frac{2}{\alpha^{*}}}\right)\right)\vee \left(\mu(T)e^{A(T)}\right).
		$$
		Thus, the desired inequality (\ref{eq3.2}) holds for any bounded solution $(Y_{t},Z_{t})_{t\in[0,T]}$ of BSDE $(\xi,g)$ when $|\xi|\leq \bar{c}$ and $|f_{t}|\leq \bar{c}e^{-t}$ hold for each $t\in[0,T]$ and some constant $\bar{c}>0$.
		
		Furthermore, in view of Doob$^{\prime}$s martingale inequality, inequality (\ref{eq3.8}) and $(a+b)^{\lambda}\leq a^{\lambda}+b^{\lambda}$ for $a,b\geq 0$ and $\lambda\in(0,1)$, we deduce that for each $p>1$ and $t\in[0,T]$,
		\begin{equation}\label{eq3.11}
			\tag{3.11}
			\begin{aligned}
				&{\mathbb{E}}\left[{\rm exp}\left(p\left(\sup\limits_{r\in[t,T]}|Y_{r}|\right)^{\frac{2}{\alpha^{*}}}\right)\right]
				\leq{\mathbb{E}}\left[\sup\limits_{r\in[t,T]}{\rm exp}\left(p\mu(r)\left(e^{A(r)}|Y_{r}|+k+\int_{t}^{r}e^{A(s)}f_{s}ds \right)^{\frac{2}{\alpha^{*}}} \right)\right]\\
				&~\leq {\mathbb{E}}\left[\sup\limits_{r\in[t,T]}\left\{{\mathbb{E}}_{r}\left[{\rm exp}\left(\mu(T)\left(|\xi|e^{A(T)}+k+\int_{t}^{T}e^{A(s)}f_{s}ds \right)^{\frac{2}{\alpha^{*}}}\right) \right]\right\}^{p}\right]\\
				&~\leq \left(\frac{p}{p-1}\right)^{p}{\mathbb{E}}\left[{\rm exp}\left(p\mu(T)\left(|\xi|e^{A(T)}+k+\int_{t}^{T}e^{A(s)}f_{s}ds \right)^{\frac{2}{\alpha^{*}}}\right) \right]\\
				&~\leq \left(\frac{p}{p-1}\right)^{p}e^{p\mu(T)k^{\frac{2}{\alpha^{*}}}}{\mathbb{E}}\left[{\rm exp}\left(p\mu(T)e^{A(T)}\left(|\xi|+\int_{t}^{T}f_{s}ds \right)^{\frac{2}{\alpha^{*}}}\right) \right]<+\infty.
			\end{aligned}
		\end{equation}
		On the other hand, using inequalities $(\ref{eq3.5})$ and $(\ref{eq3.6})$,  we can deduce that for each $m\geq 1$ and $p>1$,
		$$
		\begin{aligned}
			{\mathbb{E}}\left[\left(\int_{r}^{\sigma_{m}^{r}}|Z_{s}|^{2}ds \right)^{\frac{p}{2}} \right]&\leq {\mathbb{E}}\left[\left(\psi(\sigma_{m}^{r},\hat{Y}_{\sigma_{m}^{r}})+|X_{r}^{m}| \right)^{\frac{p}{2}} \right]\\
			&\leq 2^{p-1}\left\{{\mathbb{E}}\left[\left(\psi(\sigma_{m}^{r},\hat{Y}_{\sigma_{m}^{r}}) \right)^{\frac{p}{2}}\right]+{\mathbb{E}}\left[|X_{r}^{m}|^{\frac{p}{2}} \right] \right\},~~r\in[t,T].
		\end{aligned}
		$$
		In view of the definition of $X_{r}^{m}$ and the fact that $\hat{Y}_{t}\geq k \geq 1$, from Burkholder-Davis-Gundy$^{\prime}$s inequality and H$\ddot{\rm{o}}$lder$^{\prime}$s inequality, we observe that
		$$
		\begin{aligned}
			2^{p-1}	{\mathbb{E}}\left[ |X_{r}^{m}|^{\frac{p}{2}}\right]&\leq 2^{p-1} (\mu(T))^{\frac{p}{2}}e^{\frac{p}{2}A(T)}{\mathbb{E}}\left[\left(\int_{r}^{\sigma_{m}^{r}}\left(\psi(s,\hat{Y}_{s})\right)^{2}|Z_{s}|^{2}ds \right)^{\frac{p}{4}} \right]\\
			&\leq 2^{p-1} (\mu(T))^{\frac{p}{2}}e^{\frac{p}{2}A(T)}{\mathbb{E}}\left[\sup\limits_{s\in[r,\sigma_{m}^{r}]}\left(\psi(s,\hat{Y}_{s})\right)^{\frac{p}{2}} \left(\int_{r}^{\sigma_{m}^{r}}|Z_{s}|^{2}ds \right)^{\frac{p}{4}} \right]\\
			&\leq 2^{2p-3} (\mu(T))^{p}e^{pA(T)}{\mathbb{E}}\left[\sup\limits_{s\in[r,\sigma_{m}^{r}]}\left(\psi(s,\hat{Y}_{s})\right)^{p}\right] +\frac{1}{2}{\mathbb{E}}\left[\left(\int_{r}^{\sigma_{m}^{r}}|Z_{s}|^{2}ds \right)^{\frac{p}{2}} \right].
		\end{aligned}
		$$
		Then,
		$$
		\begin{aligned}
			{\mathbb{E}}\left[\left(\int_{r}^{\sigma_{m}^{r}}|Z_{s}|^{2}ds \right)^{\frac{p}{2}} \right]
			&\leq 2^{p}{\mathbb{E}}\left[\left(\psi(\sigma_{m}^{r},\hat{Y}_{\sigma_{m}^{r}}) \right)^{\frac{p}{2}}\right]+ 2^{2p-2} (\mu(T))^{p}e^{pA(T)}{\mathbb{E}}\left[\sup\limits_{s\in[r,\sigma_{m}^{r}]}\left(\psi(s,\hat{Y}_{s})\right)^{p}\right]\\
			&\leq(8\mu(T))^{p}e^{pA(T)}{\mathbb{E}}\left[\sup\limits_{s\in[r,\sigma_{m}^{r}]}\left(\psi(s,\hat{Y}_{s})\right)^{p}\right],~r\in[t,T].
		\end{aligned}
		$$
		From the last inequality, sending $m$ to infinity and $r=t$, and using Fatou$^{\prime}$s lemma, Lebesgue$^{\prime}$s dominated  convergence theorem and inequality (\ref{eq3.11}), we get that for each $t\in [0,T]$,
		\begin{equation}\label{eq3.12}
			\tag{3.12}
			\begin{aligned}
				&{\mathbb{E}}\left[\left(\int_{t}^{T}|Z_{s}|^{2}ds \right)^{\frac{p}{2}} \right]\leq(8\mu(T))^{p}e^{pA(T)}{\mathbb{E}}\left[\sup\limits_{s\in[t,T]}\left(\psi(s,\hat{Y}_{s})\right)^{p}\right]\\
				&~=(8\mu(T))^{p}e^{pA(T)}{\mathbb{E}}\left[\sup\limits_{r\in[t,T]}{\rm exp}\left(p\mu(r)\left(e^{A(r)}|Y_{r}|+k+\int_{t}^{r}e^{A(s)}f_{s}ds \right)^{\frac{2}{\alpha^{*}}} \right)\right]\\
				&~\leq (8\mu(T))^{p}e^{pA(T)}\left(\frac{p}{p-1}\right)^{p}e^{p\mu(T)k^{\frac{2}{\alpha^{*}}}}{\mathbb{E}}\left[{\rm exp}\left(p\mu(T)e^{A(T)}\left(|\xi|+\int_{t}^{T}f_{s}ds \right)^{\frac{2}{\alpha^{*}}}\right) \right].
			\end{aligned}
		\end{equation}
		Combining inequalities $(\ref{eq3.11})$ and $(\ref{eq3.12})$, for each $p> 1$, $t\in[0,T]$, we have
		\begin{equation}\label{eq3.13}
			\tag{3.13}
			\begin{aligned}
				&{\mathbb{E}}\left[{\rm exp}\left(p\left(\sup\limits_{s\in[t,T]}|Y_{s}|\right)^{\frac{2}{\alpha^{*}}}\right)\right] +{\mathbb{E}}\left[\left(\int_{t}^{T}|Z_{s}|^{2}ds \right)^{\frac{p}{2}} \right]\\
				&~\leq K_{p}{\mathbb{E}}\left[{\rm exp}\left(K_{p}\left(|\xi|+\int_{t}^{T}f_{s}ds \right)^{\frac{2}{\alpha^{*}}} \right)\right],
			\end{aligned}
		\end{equation}
		where
		$$
		K_{p}:=\left(\left(\frac{p}{p-1} \right)^{p}\left((8\mu(T))^{p}e^{pA(T)}+1 \right)e^{p\mu(T)k^{\frac{2}{\alpha^{*}}}}\right)\vee \left(p\mu(T)e^{A(T)}\right).\vspace{0.1cm}
		$$
		Thus, the desired inequality (\ref{eq3.3})  holds for any bounded solution $(Y_{t},Z_{t})_{t\in[0,T]}$ of BSDE $(\xi,g)$ when $|\xi|\leq \bar{c}$ and $|f_{t}|\leq \bar{c}e^{-t}$ hold for each $t\in[0,T]$ and some constant $\bar{c}>0$.\vspace{0.2cm}
		
		{\bf  Step 2.} Based on Step 1, we use the localization procedure of Briand and Hu \cite{2} to construct the desired solution.  For each pair of given positive integers $n,~q\geq1$ and $(\omega,t,y,z)\in\Omega\times [0,T]\times\mathbb{R}^{1+d}$, let
		$$
		\xi^{n,q}:=\xi^{+}\wedge n-\xi^{-}\wedge q~~~{\rm{and}}~~~g^{n,q}(\omega,t,y,z):=g^{+}(\omega,t,y,z)\wedge (ne^{-t})-g^{-}(\omega,t,y,z)\wedge (qe^{-t}).
		$$
		It is immediately seen that $|\xi^{n,q}|\leq |\xi|\wedge(n\vee q)$ and $|g^{n,q}|\leq|g|\wedge[(n\vee q)e^{-t}]$. It can also be verified that the generator $g^{n,q}$ satisfies assumptions (H1), (H2), (EX1) and (EX2) with $u(t)=(n\vee q)e^{-t}$ and $f_{t}\wedge[(n\vee q)e^{-t}]$ instead of $f_{t}$. Then,
		thanks to Lemma \ref{Le:2.1}, the following BSDE $(\xi^{n,q},g^{n,q})$ has  a minimal bounded solution $(Y_{t}^{n,q},Z_{t}^{n,q})_{t\in[0,T]}$ in the space of $\mathcal{S}^{\infty}(0,T;\mathbb{R})\times \mathcal{M}^{2}(0,T;\mathbb{R}^{d})$:
		$$
		Y_{t}^{n,q}=\xi^{n,q} +\int_{t}^{T}g^{n,q}(s,Y_{s}^{n,q},Z_{s}^{n,q})ds-\int_{t}^{T}Z_{s}^{n,q}\cdot dB_{s},~~~~~t\in [0,T].
		$$
		From the comparison result stated in Lemma \ref{Le:2.1}, we know that $(Y_{t}^{n,q})_{t\in[0,T]}$ is non-decreasing in $n$ and non-increasing in $q$. Furthermore, by virtue of the assertion $(\ref{eq3.10})$ of Step 1,  for each $t\in[0,T]$ and $n,q\geq 1$, we have
		$$
		\begin{aligned}
			{\rm exp}\left(|Y_{t}^{n,q}| ^{\frac{2}{\alpha^{*}}}\right)&\leq K {\mathbb{E}}_{t}\left[{\rm exp}\left(K\left(|\xi^{n,q}|+\int_{t}^{T}f_{s}\wedge [(n\vee q)e^{-s}]ds \right)^{\frac{2}{\alpha^{*}}}\right) \right]\\
			&\leq K {\mathbb{E}}_{t}\left[{\rm exp}\left(K\left(|\xi|+\int_{0}^{T}f_{s}ds \right)^{\frac{2}{\alpha^{*}}}\right) \right],
		\end{aligned}
		$$
		which implies
		$$
			|Y_{t}^{n,q}|\leq \left( {\rm{ln}}K+{\rm{ln}}\left\{ {\mathbb{E}}_{t}\left[{\rm exp}\left(K\left(|\xi|+\int_{0}^{T}f_{s}ds \right)^{\frac{2}{\alpha^{*}}}\right) \right]\right\} \right)^{\frac{\alpha^{*}}{2}}.
		$$
		Thus, in view of the last inequality and the assumption (EX2), we can  apply the localization procedure developed initially in Briand and Hu \cite{2} to obtain an $(\mathcal{F}_{t})$-progressively measurable $\mathbb{R}^{d}$-valued process $(Z_{t})_{t\in[0,T]}$  such that $Z_{\cdot}^{n,q}$ tends to $Z_{\cdot}$ as $n,q$ tend to infinity and the pair of $(Y_{\cdot}:=\inf\limits_{q}\sup\limits_{n}Y_{\cdot}^{n,q},Z_{\cdot})$ is a solution to BSDE $(\xi,g)$. And, the desired inequalities (\ref{eq3.2}) and (\ref{eq3.3}) for $(Y_{\cdot},Z_{\cdot})$ follow immediately from Step 1 and the definition of $(Y_{\cdot},Z_{\cdot})$. The proof is then complete.
	\end{proof}

	\begin{remark}\label{Re:3.2}
		From the above proof, it is easy to verify that in the assumptions of Theorem \ref{The:3.2}, if $|\xi|$ is replaced with $\xi^{+}$ and (EX1) is replaced by the following  (EX1$^{\prime}$):

\begin{enumerate}
		\renewcommand{\theenumi}{($\rm{EX1}^{\prime}$\alph{enumii})}
		\renewcommand{\labelenumi}{\theenumi}
		\item\label{UN2} d$\mathbb{P}\times$ d$t-a.e.,$ for each $(y,z)\in \mathbb{R}\times \mathbb{R}^{d}$,
     	$$
	  {{\bf 1}}_{\left\{y>0\right\}}g(\omega,t,y,z)\leq f_{t}(\omega)+\beta(t)|y|+\gamma(t)|z|^{\alpha},
      	$$	
	\end{enumerate}
		then for any solution $(Y_{t},Z_{t})_{t\in[0,T]}$ of BSDE $(\xi,g)$ such that $\sup\limits_{t\in[0,T]}Y_{t}^{+}$ has sub-exponential moments of any order, there exists a constant $k\geq 1$ and a differentiable function $\mu(\cdot):[0,T]\mapsto \left[1,+\infty\right)$  such that $\mathbb{P}-a.s.$, for each $t\in[0,T]$,
		$$
		\begin{aligned}
			&{\rm exp}\left(\mu(t)\left(e^{A(t)}Y_{t}^{+}+k \right)^{\frac{2}{\alpha^{*}}}\right)+{\mathbb{E}}_{t}\left[\int_{t}^{T}{\bf{1}}_{\left\{Y_{s}>0\right\}}|Z_{s}|^{2}ds \right]\\
			&~\leq {\mathbb{E}}_{t}\left[{\rm exp}\left(\mu(T)\left(\xi^{+}e^{A(T)}+k+\int_{t}^{T}e^{A(s)}f_{s}ds \right)^{\frac{2}{\alpha^{*}}}\right) \right].
		\end{aligned}
		$$
		Consequently, there exists a constant $K>0$ depending only on $(\alpha,T,\beta(\cdot),\gamma(\cdot))$ such that $\mathbb{P}-a.s.$, for each $t\in[0,T]$,
		$$
			\begin{aligned}
				{\rm exp}\left(\left(Y_{t}^{+} \right)^{\frac{2}{\alpha^{*}}}\right)+{\mathbb{E}}_{t}\left[\int_{t}^{T}{\bf{1}}_{\left\{Y_{s}>0\right\}}|Z_{s}|^{2}ds \right]
				\leq K {\mathbb{E}}_{t}\left[{\rm exp}\left(K\left(\xi^{+}+\int_{t}^{T}f_{s}ds \right)^{\frac{2}{\alpha^{*}}}\right) \right].
			\end{aligned}
		$$
		In the above proof one only  needs to  use $Y_{\cdot}^{+},~{\bf 1}_{Y_{\cdot}>0}Y_{\cdot}$ and $\frac{1}{2}L_{\cdot}$ instead of $|Y_{\cdot}|,~{\rm{sgn}}(Y_{\cdot})$ and $L_{\cdot}$, respectively.
	\end{remark}

	\section{Uniqueness of the  unbounded solution}\label{Title:4}
	In this section, we will establish a general comparison theorem and a general uniqueness and existence theorem for unbounded solutions of BSDEs, which extend the corresponding results in Fan and Hu \cite{12}.
	
	\subsection{Comparison theorem}\label{Title:4.1}
	 Let us introduce the following time-varying extended convexity assumption on the  generator $g$.
	
	\begin{enumerate}
		\renewcommand{\theenumi}{($\rm{UN}$)}
		\renewcommand{\labelenumi}{\theenumi}
		\item\label{UN} $g$ satisfies either of the following two conditions:
		
		$(i)$~d$\mathbb{P}\times$ d$t-a.e.$, for each $(y_{i},z_{i})\in  \mathbb{R}^{1+d},~i=1,2$ and  $\theta \in(0,1)$,
		\begin{equation}\label{eq4.1}
			\tag{4.1}
			\begin{aligned}
				&{{\bf 1}}_{\left\{y_{1}-\theta y_{2}>0\right\}}(g(\omega,t,y_{1},z_{1})-\theta g(\omega,t,y_{2},z_{2}))\\
				&~\leq (1-\theta)(f_{t}(\omega)+\beta(t)|y_{2}|+\gamma(t)\left[{\rm{ln}}\left(e+|z_{2}|\right)\right]^{\frac{\alpha^{*}}{2}}+\beta(t)|\delta_{\theta}y|+\gamma(t)|\delta_{\theta}z|^{\alpha});\\	
			\end{aligned}
		\end{equation}	
		
		$(ii)$~d$\mathbb{P}\times$ d$t-a.e.$, for each $(y_{i},z_{i})\in  \mathbb{R}^{1+d},~i=1,2$ and $\theta\in(0,1)$,
		\begin{equation}\label{eq4.2}
			\tag{4.2}
			\begin{aligned}
				&-{{\bf 1}}_{\left\{y_{1}-\theta y_{2}<0\right\}}(g(\omega,t,y_{1},z_{1})-\theta g(\omega,t,y_{2},z_{2}))\\
				&~\leq (1-\theta)(f_{t}(\omega)+\beta(t)|y_{2}|+\gamma(t)\left[{\rm{ln}}\left(e+|z_{2}|\right)\right]^{\frac{\alpha^{*}}{2}}+\beta(t)|\delta_{\theta}y|+\gamma(t)|\delta_{\theta}z|^{\alpha}),\\
			\end{aligned}
		\end{equation}
		where
		$$\delta_{\theta}y:=\frac{y_{1}-\theta y_{2}}{1-\theta},~~~
		\delta_{\theta}z:=\frac{z_{1}-\theta z_{2}}{1-\theta}.\vspace{0.1cm}
		$$	
		Assume further that $\gamma(\cdot):[0,T]\mapsto \mathbb{R}_{+}$ satisfies
		\begin{equation}\label{eq4.3}
			\tag{4.3}
			0<\int_{0}^{T}\gamma(t)dt<+\infty.
		\end{equation}

	\end{enumerate}

	\begin{remark}\label{Re:4.1}
		$(i)$ It is clear that  even though for the case of $T<+\infty$ and both $\beta(\cdot)$ and $\gamma(\cdot)$ are nonnegative constants, due to presence of the term $\gamma(t)\left[{\rm{ln}}(e+|z_{2}|)\right]^{\frac{\alpha^{*}}{2}}$, the assumption (UN) is strictly weaker than  assumption $({\rm{H}}2^{\prime})$ in Fan and Hu \cite{12}. And, it is easy to verify that if $g(\omega,t,y,z)$ satisfies (i) of (UN), then $\hat{g}(\omega,t,y,z):=-g(\omega,t,-y,-z)$ satisfies (ii) of (UN).
		
		$(ii)$ Using a similar argument as in  page 29 of Fan et al \cite{15}, we can conclude that if $g$ satisfies assumption (UN), then d$\mathbb{P}\times$ d$t-a.e.$, $g(\omega,t,\cdot,\cdot)$ is locally Lipschitz  continuous on $\mathbb{R}\times \mathbb{R}^{d}$.
		
		$(iii)$ It is easy to verify that if the generator $g_{i}$ satisfies inequality (\ref{eq4.1}) (resp. (\ref{eq4.2})) for $i=1,2$, then for each $k_{i}>0$,  $i=1,2$, all of $k_{1}g_{1}+k_{2}g_{2},~g_{1}\vee g_{2}$ and $g_{1}\wedge g_{2}$  satisfy inequality (\ref{eq4.1}) (resp. (\ref{eq4.2})).
		
		$(iv)$ Letting $y_{1}=y_{2}=y$ and $z_{1}=z_{2}=z$ in $(\ref{eq4.1})$ and $(\ref{eq4.2})$ respectively yields that for some constant $k>0$ depending only on $\alpha$,
		$$
		{\bf 1}_{\left\{y>0\right\}}g(\omega,t,y,z)\leq f_{t}(\omega)+2\beta(t)|y|+k\gamma(t)|z|^{\alpha}
		$$
		and
		$$
		-{\bf 1}_{\left\{y<0\right\}}g(\omega,t,y,z)\leq f_{t}(\omega)+2\beta(t)|y|+k\gamma(t)|z|^{\alpha},
		$$
		which yield that the generator $g$ has a time-varying one-sided  linear growth  in   $y$ and a time-varying sub-quadratic growth   in  $z$.
		
		$(v)$ Letting  $z_{1}=z_{2}=z$ in $(\ref{eq4.1})$ or $(\ref{eq4.2})$ and then letting $\theta \rightarrow 1$ yields that	
		$$
		{\bf 1}_{\left\{y_{1}-y_{2}>0\right\}}(g(\omega,t,y_{1},z)-g(\omega,t,y_{2},z))\leq \beta(t)|y_{1}-y_{2}|,
		$$
		which means that the generator $g$ satisfies a time-varying monotonicity condition  in  $y$.
	\end{remark}

	The following Theorem \ref{The:4.2} establishes a general comparison theorem for unbounded solutions of BSDEs under assumption $(\rm{UN})$, which is the main result of this subsection.
	
	\begin{theorem}\label{The:4.2}
		Assume that $\xi$ and $\xi^{\prime}$ are two terminal values satisfying $\mathbb{P}-a.s.$, $~\xi\leq \xi^{\prime}$, $g$ and $g^{\prime}$ are two  generators,  and $(Y_{t},Z_{t})_{t\in[0,T]}$ and $(Y_{t}^{\prime}, Z_{t}^{\prime})_{t\in[0,T]}$ are, respectively,  solutions to BSDE $(\xi,g)$ and BSDE $(\xi^{\prime},g^{\prime})$ such that for each $p>1$,
		\begin{equation}\label{eq4.4}
			\tag{4.4}
			 {\mathbb{E}}\left[{\rm{exp}}\left(p\left(\sup\limits_{t\in[0,T]}(|Y_{t}|+|Y_{t}^{\prime}|)\right)^{\frac{2}{\alpha^{*}}}\right)  \right] <+\infty,
		\end{equation}
		and $Z_{\cdot},Z_{\cdot}^{\prime}\in\mathcal{M}^{p}(0,T;\mathbb{R}^{d})$.   If $g~({\rm{resp.}}~ g^{\prime})$ satisfies assumption  (UN) and
		\begin{equation}\label{eq4.5}
			\tag{4.5}
			{\rm{d}}\mathbb{P}\times {\rm{d}}t-a.e.,~g(t,Y_{t}^{\prime},Z_{t}^{\prime})\leq g^{\prime}(t,Y_{t}^{\prime},Z_{t}^{\prime})~~({\rm{resp.}}~ g(t,Y_{t},Z_{t})\leq g^{\prime}(t,Y_{t},Z_{t})),
		\end{equation}
		then $\mathbb{P}-a.s.$, for each $t\in[0,T]$, $Y_{t}\leq Y_{t}^{\prime}$.
	\end{theorem}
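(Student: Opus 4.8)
The plan is to adapt the $\theta$-difference technique of Briand and Hu \cite{3} to the present time-varying, sub-quadratic, infinite-horizon setting, combined with the It\^{o}--Tanaka/test-function machinery already developed for Theorem \ref{The:3.2}. First I would reduce to a single canonical case. Since the statement is symmetric under interchanging $(\xi,g,Y,Z)$ with $(\xi',g',Y',Z')$, and since Remark \ref{Re:4.1}$(i)$ shows that $g$ satisfying part $(ii)$ of (UN) is equivalent, via $\hat g(\omega,t,y,z):=-g(\omega,t,-y,-z)$, to satisfying part $(i)$, it suffices to treat the case in which $g$ satisfies $(i)$ of (UN) together with $g(t,Y_t',Z_t')\le g'(t,Y_t',Z_t')$, and to prove $Y_t\le Y_t'$ for all $t$.

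Next, fix $\theta\in(0,1)$ and set $U_t:=Y_t-\theta Y_t'$ and $V_t:=Z_t-\theta Z_t'$, so that $U$ solves a BSDE with terminal value $\xi-\theta\xi'$ and generator $G_s:=g(s,Y_s,Z_s)-\theta g'(s,Y_s',Z_s')$. On the event $\{U_s>0\}=\{\delta_\theta Y_s>0\}$, assumption (\ref{eq4.5}) gives $G_s\le g(s,Y_s,Z_s)-\theta g(s,Y_s',Z_s')$, and then applying $(i)$ of (UN) with $(y_1,z_1)=(Y_s,Z_s)$, $(y_2,z_2)=(Y_s',Z_s')$, and using $|\delta_\theta Y_s|=U_s^{+}/(1-\theta)$, $|\delta_\theta Z_s|=|V_s|/(1-\theta)$ on this set, yields the pointwise bound
\[
\mathbf{1}_{\{U_s>0\}}G_s\le(1-\theta)\Big(f_s+\beta(s)|Y_s'|+\gamma(s)[\ln(e+|Z_s'|)]^{\frac{\alpha^*}{2}}\Big)+\beta(s)U_s^{+}+\frac{\gamma(s)|V_s|^{\alpha}}{(1-\theta)^{\alpha-1}}.
\]
Because $\xi\le\xi'$ forces $U_T^{+}\le(1-\theta)(\xi')^{+}$, both the terminal value and the first group of driver terms already carry the favourable factor $(1-\theta)$; the only obstruction to a clean estimate is the sub-quadratic term $\gamma(s)|V_s|^{\alpha}/(1-\theta)^{\alpha-1}$, which blows up as $\theta\uparrow1$.

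To remove that term I would run an It\^{o}--Tanaka argument with a strictly convex, increasing test function $\Phi$ of $U_s^{+}$, taking the shifted exponential--sub-quadratic weight $\Phi(x)=\exp(\lambda(x+k)^{2/\alpha^*})$ modelled on the $\psi(s,x)$ of Theorem \ref{The:3.2} (so that $\Phi''>0$ and $\Phi'$ stays finite near $0$). The second-order term then produces $-\tfrac12\Phi''(U_s^{+})\mathbf{1}_{\{U_s>0\}}|V_s|^2\,ds$, and Young's inequality with the conjugate pair $2/\alpha$ and $2/(2-\alpha)$ lets me absorb $\Phi'(U_s^{+})\gamma(s)|V_s|^{\alpha}/(1-\theta)^{\alpha-1}$ into this quadratic term, up to a remainder controlled by $\gamma^{2/(2-\alpha)}(s)$. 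Equivalently, one may write the offending term as $\eta_s\cdot V_s$ with $|\eta_s|=\gamma(s)|V_s|^{\alpha-1}/(1-\theta)^{\alpha-1}$ and eliminate it by a Girsanov change of measure, reducing $U^{+}$ to a genuinely linear BSDE inequality driven by $\beta(s)U_s^{+}$ plus the $(1-\theta)$-small inhomogeneity $\Lambda_s:=f_s+\beta(s)|Y_s'|+\gamma(s)[\ln(e+|Z_s'|)]^{\alpha^*/2}$. After localizing the martingale part and invoking Gronwall's lemma against $\beta(\cdot)\in L^1$, I expect an estimate showing that $(Y_t-\theta Y_t')^{+}$ is dominated by $(1-\theta)$ times a $\theta$-uniformly integrable quantity built from $(\xi')^{+}$ and $\int_t^T\Lambda_s\,ds$.

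The hard part will be to keep that dominating quantity finite \emph{uniformly as $\theta\uparrow1$}, despite the $(1-\theta)^{-(\alpha-1)}$ blow-up in the weight (equivalently, in $|\eta|^2$): this is precisely the new difficulty created by the extra term $\gamma(t)[\ln(e+|z_2|)]^{\alpha^*/2}$ in (UN). I would control it by H\"older's inequality, pairing $\gamma(\cdot)\in L^{2/(2-\alpha)}$ with its conjugate exponent and using that $[\ln(e+|Z'|)]^{\alpha^*/2}$ grows more slowly than any positive power of $|Z'|$ while $Z',Z'\in\mathcal{M}^{p}$ for every $p$; the quantity $\int_0^T\gamma^2(s)|V_s|^{2(\alpha-1)}\,ds$ entering $|\eta|^2$ is handled the same way, through $\gamma^2\in L^{1/(2-\alpha)}$ paired with $|V|^{2(\alpha-1)}\in L^{1/(\alpha-1)}$. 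The decisive structural point is that $\alpha<2$ makes the relevant tails of order $2/\alpha>1$, so the associated exponentials are integrable, and this meshes with the sub-exponential moment bound (\ref{eq4.4}) for $\sup_{t}(|Y_t|+|Y_t'|)$, which dominates both $(\xi')^{+}$ and the weight $\Phi$. Granting this uniform control, I would finally let $\theta\uparrow1$: since $(Y_t-\theta Y_t')^{+}\to(Y_t-Y_t')^{+}$ pointwise while the right-hand side of the estimate tends to $0$, I conclude $(Y_t-Y_t')^{+}=0$, i.e. $Y_t\le Y_t'$ for all $t\in[0,T]$, $\mathbb{P}$-a.s.
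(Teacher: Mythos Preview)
Your overall strategy (reduce to the case $g$ satisfies $(i)$ of (UN), run a $\theta$-difference, bound the positive part via an It\^{o}--Tanaka/test-function argument, send $\theta\uparrow1$) matches the paper's. But there is a real gap in the execution: the $(1-\theta)^{-(\alpha-1)}$ blow-up you struggle with is self-inflicted and your proposed tools cannot remove it.

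The issue is that you work with the \emph{unnormalized} pair $U=Y-\theta Y'$, $V=Z-\theta Z'$. This forces the sub-quadratic driver bound to carry the factor $\gamma(s)|V_s|^{\alpha}/(1-\theta)^{\alpha-1}$. If you then try to absorb $\Phi'(U^{+})\,\gamma|V|^{\alpha}/(1-\theta)^{\alpha-1}$ into $\tfrac12\Phi''(U^{+})|V|^{2}$ via Young's inequality with exponents $2/\alpha$ and $2/(2-\alpha)$, the residual term picks up a factor $(1-\theta)^{-2(\alpha-1)/(2-\alpha)}$, which still diverges as $\theta\uparrow1$. The Girsanov alternative has the same defect: $|\eta_s|^{2}=\gamma(s)^{2}|V_s|^{2(\alpha-1)}/(1-\theta)^{2(\alpha-1)}$ also blows up, so the Novikov/BMO control is not uniform in $\theta$. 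In short, neither route gives a $\theta$-uniform bound.

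The paper avoids this entirely by normalizing first: set $\delta_\theta U:=(Y-\theta Y')/(1-\theta)$ and $\delta_\theta V:=(Z-\theta Z')/(1-\theta)$. Then $(\delta_\theta U,\delta_\theta V)$ solves a genuine BSDE whose generator $\delta_\theta g$ satisfies, on $\{y>0\}$, the \emph{$\theta$-independent} one-sided bound
\[
\mathbf{1}_{\{y>0\}}\,\delta_\theta g(s,y,z)\le \hat f_s+\beta(s)|y|+\gamma(s)|z|^{\alpha},
\qquad \hat f_s:=f_s+\beta(s)|Y'_s|+\gamma(s)\big[\ln(e+|Z'_s|)\big]^{\alpha^{*}/2}.
\]
This is exactly assumption (EX1$'$), so Remark~\ref{Re:3.2} applies \emph{off the shelf}: no new It\^{o}--Tanaka computation is needed. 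One gets $\exp\big((\delta_\theta U_t^{+})^{2/\alpha^{*}}\big)\le K\,\mathbb{E}_t\big[\exp\big(K(\xi^{+}+\int_0^T\hat f_s\,ds)^{2/\alpha^{*}}\big)\big]$ with $K$ independent of $\theta$, hence $(Y_t-\theta Y_t')^{+}\le(1-\theta)\cdot(\text{something finite})\to0$.

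Finally, you misidentify the ``new difficulty'' flagged in the paper. It is \emph{not} the $(1-\theta)$ blow-up; it is proving that $\int_0^T\hat f_s\,ds$ has sub-exponential moments of every order, because $\hat f$ now contains the extra term $\gamma(s)[\ln(e+|Z'_s|)]^{\alpha^{*}/2}$. Your vague H\"older argument can be made to work, but the paper's clean trick is Jensen's inequality against the probability measure $d\mu_t:=\gamma(t)\,dt/\int_0^T\gamma(t)\,dt$ (this is where \eqref{eq4.3} is used), using concavity of $x\mapsto[\ln(k_\alpha+x)]^{\alpha^{*}/2}$ with $k_\alpha=\exp(\alpha^{*}/2)$; this converts the sub-exponential moment into a polynomial moment of $\int_0^T\gamma(t)|Z'_t|\,dt$, which is finite since $Z'\in\mathcal{M}^{p}$ for all $p$.
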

	
	\begin{proof}
		The proof is divided into the following four steps.
		
		{\bf Step 1.}  Define
		$$
		\hat{f_{t}}:=f_{t}+\beta(t)|Y_{t}^{\prime}|+\gamma(t)\left[{\rm{ln}}\left(e+|Z^{\prime}_{t}|\right)\right]^{\frac{\alpha^{*}}{2}},~~t\in[0,T].
		$$
		We first prove that $\int_{0}^{T}\hat{f}_{t}dt$ has sub-exponential moments of any order, i.e.,
		\begin{equation}\label{eq4.6}
			{\mathbb{E}}\left[{\rm exp}\left(p\left(\int_{0}^{T}\hat{f}_{s}ds \right)^{\frac{2}{\alpha^{*}}} \right)\right]<+\infty,~~~\forall~ p>1. \tag{4.6}
		\end{equation}
		In fact,  by virtue of $\beta(\cdot)\in L^{1}([0,T];\mathbb{R}_{+})$ and inequality (\ref{eq4.4}), we have for each $p>1$,
		\begin{equation}\label{eq4.7}
			\tag{4.7}
			\begin{aligned}
				{\mathbb{E}}\left[{\rm exp}\left(p\left(\int_{0}^{T}\beta(t)|Y_{t}^{\prime}|dt \right)^{\frac{2}{\alpha^{*}}}\right)\right]&\leq {\mathbb{E}}\left[{\rm exp}\left(p\left(\sup\limits_{t\in[0,T]}|Y_{t}^{\prime}|\int_{0}^{T}\beta(t)dt \right)^{\frac{2}{\alpha^{*}}}\right)\right]\\
                &= {\mathbb{E}}\left[{\rm exp}\left(p\left(\int_{0}^{T}\beta(t)dt\right)^{\frac{2}{\alpha^{*}}}\left(\sup\limits_{t\in[0,T]}|Y_{t}^{\prime}| \right)^{\frac{2}{\alpha^{*}}}\right)\right]<+\infty.
			\end{aligned}
		\end{equation}
		On the other hand,
		by combining inequality (\ref{eq4.3}), H\"{o}lder$^{\prime}$s inequality  and the basic inequality $(a+b)^{q}\leq 2^{q}(a^{q}+b^{q})$ for $a,b>0$ and $q>0$,  we deduce that for each $p>1$,
		$$
		\begin{aligned}
			&{\mathbb{E}}\left[\left(\int_{0}^{T}\gamma(t)|Z_{t}^{\prime}|dt\right)^{\delta_{p}}\right]\leq {\mathbb{E}}\left[\left(\int_{0}^{T}\gamma(t)(1+|Z_{t}^{\prime}|^{\alpha})dt\right)^{\delta_{p}}\right]\\
			&~\leq 2^{\delta_{p}}\left(\int_{0}^{T}\gamma(t)dt\right)^{\delta_{p}}+2^{\delta_{p}} {\mathbb{E}}\left[\left(\int_{0}^{T}\gamma^{\frac{2}{2-\alpha}}(t)dt\right)^{\frac{(2-\alpha)}{2}\delta_{p}}\left(\int_{0}^{T}|Z_{t}^{\prime}|^{2}dt\right)^{\frac{\alpha}{2}\delta_{p}}\right]<+\infty,
		\end{aligned}
		$$
    	where
    	$$\delta_{p}:=p\left(\int_{0}^{T}\gamma(t)dt\right)^{\frac{2}{\alpha^{*}}}.$$
    	Note that $d\mu_{t}:=\frac{\gamma(t)dt}{\int_{0}^{T}\gamma(t)dt}$ defines a probability measure on $[0,T]$ and ${[{\rm{ln}}(k_{\alpha}+x)]^{\frac{\alpha^{*}}{2}}}$ with $k_{\alpha}:={\rm{exp}}\left(\frac{\alpha^{*}}{2}\right)$ is a concave function on $\mathbb{R}_{+}$. It follows from Jensen$^{\prime}$s inequality and the last inequality that
		\begin{equation}\label{eq4.8}
			\tag{4.8}
			\begin{aligned}
				&{\mathbb{E}}\left[{\rm exp}\left(p\left(\int_{0}^{T}\gamma(t)\left({\rm{ln}}(e+|Z_{t}^{\prime}|)\right)^\frac{\alpha^{*}}{2}dt \right)^{\frac{2}{\alpha^{*}}}\right)\right]\\
				&~\leq {\mathbb{E}}\left[{\rm exp}\left(\delta_{p}\left(\int_{0}^{T}\left({\rm{ln}}(k_{\alpha}+|Z_{t}^{\prime}|)\right)^\frac{\alpha^{*}}{2}d\mu_{t}  \right)^{\frac{2}{\alpha^{*}}}\right)\right]\\
				&~\leq {\mathbb{E}}\left[{\rm exp} \left(\delta_{p} {\rm{ln}}\left(k_{\alpha}+ {\int_{0}^{T}|Z_{t}^{\prime}|d\mu_{t}}\right) \right)  \right]= {\mathbb{E}}\left[\left(k_{\alpha}+ {\int_{0}^{T}|Z_{t}^{\prime}|d\mu_{t}}\right)^{\delta_{p}}\right]\\
				&~\leq 2^{\delta_{p}}\left(k_{\alpha}^{\delta_{p}}+{\mathbb{E}}\left[\left(\int_{0}^{T}|Z_{t}^{\prime}|d\mu_{t}\right)^{\delta_{p}}\right]\right)\\
				&~= 2^{\delta_{p}}\left(k_{\alpha}^{\delta_{p}}+\frac{1}{\left(\int_{0}^{T}\gamma(t)dt\right)^{\delta_{p}}}{\mathbb{E}}\left[\left(\int_{0}^{T}\gamma(t)|Z_{t}^{\prime}|dt\right)^{\delta_{p}}\right]\right)<+\infty.
			\end{aligned}
		\end{equation}
  In view of inequalities  (\ref{eq4.7}) and (\ref{eq4.8}) together with integrability of process $f_{\cdot}$, the desired assertion (\ref{eq4.6}) follows immediately from H\"{o}lder$^{\prime}$s inequality.
		
		{\bf Step 2.}
		Assume that the generator $g$ satisfies  $(i)$ of assumption (UN), and ${\rm{d}}\mathbb{P}\times {\rm{d}}t-a.e.$, $g(t,Y_{t}^{\prime},Z_{t}^{\prime})\leq g^{\prime}(t,Y_{t}^{\prime},Z_{t}^{\prime})$. We proceed by using the $\theta$-difference technique developed in Briand and Hu \cite{3}. For each fixed $\theta\in(0,1)$, define
		$$
		\delta_{\theta}U_{\cdot}:=\frac{Y_{\cdot}-\theta Y_{\cdot}^{\prime}}{1-\theta}~~~{\rm{and}}~~~\delta_{\theta}V_{\cdot}:=\frac{Z_{\cdot}-\theta Z_{\cdot}^{\prime}}{1-\theta}.
		$$
		Then the pair $(\delta_{\theta}U_{\cdot},\delta_{\theta}V_{\cdot})$ satisfies the following BSDE:
		$$
		\delta_{\theta}U_{t}=\delta_{\theta}U_{T}+\int_{t}^{T}\delta_{\theta}g(s,\delta_{\theta}U_{s},\delta_{\theta}V_{s})ds-\int_{t}^{T}\delta_{\theta}V_{s}\cdot dB_{s},~~~t\in[0,T],
		$$
		where ${\rm{d}}\mathbb{P}\times {\rm{d}}s-a.e.$, for each $(y,z)\in \mathbb{R}\times \mathbb{R}^{d}$,
		\begin{equation}\label{eq4.9}
			\tag{4.9}
			\begin{aligned}
				\delta_{\theta}g(s,y,z):=&\frac{1}{1-\theta}\left[g(s,(1-\theta)y+\theta Y_{s}^{\prime},(1-\theta)z+\theta Z_{s}^{\prime})-\theta g(s,Y_{s}^{\prime},Z_{s}^{\prime})\right]\\
				&+\frac{\theta}{1-\theta}[g(s,Y_{s}^{\prime},Z_{s}^{\prime})-g^{\prime}(s,Y_{s}^{\prime},Z_{s}^{\prime})].
			\end{aligned}
		\end{equation}
		It follows from the assumption (UN) and inequality (\ref{eq4.9}) that ${\rm{d}}\mathbb{P}\times {\rm{d}}s-a.e.$, for each $(y,z)\in \mathbb{R}\times \mathbb{R}^{d}$,
		\begin{equation}\label{eq4.10}
			\tag{4.10}
			\begin{aligned}
				{{\bf 1}}_{\left\{y>0\right\}}\delta_{\theta}g(s,y,z)&\leq f_{s}+\beta(s)|Y_{s}^{\prime}|+\gamma(s)\left[{\rm{ln}}\left(e+|Z^{\prime}_{s}|\right)\right]^{\frac{\alpha^{*}}{2}}+\beta(s)|y|+\gamma(s)|z|^{\alpha}\\
				&=\hat{f}_{s}+\beta(s)|y|+\gamma(s)|z|^{\alpha},	
			\end{aligned}
		\end{equation}
		which together with inequality (\ref{eq4.6}) means that the generator $\delta_{\theta}g$ satisfies assumption $(\rm{EX}1^{\prime})$ with $\hat{f}_{\cdot}$ instead of $f_{\cdot}$, and
		$$
		\delta_{\theta}U_{T}^{+}=\frac{(\xi-\theta\xi^{\prime})^{+}}{1-\theta}
		=\frac{[\xi-\theta\xi+\theta(\xi-\xi^{\prime})]^{+}}{1-\theta}\leq\xi^{+}.
		$$
	    Then, according to Remark {\ref{Re:3.2}}, we know that there exists a constant $K>0$ depending only on $(\alpha, T, \beta(\cdot),\gamma(\cdot))$  such that for each $t\in[0,T]$,
		$$
		{\rm exp}\left(\delta_{\theta} U_{t}^{+} \right)^{\frac{2}{\alpha^{*}}}\leq K {\mathbb{E}}_{t}\left[{\rm exp}\left(K\left(\xi^{+}+\int_{0}^{T}f_{s}ds \right)^{\frac{2}{\alpha^{*}}}\right) \right],
		$$
		and then
		$$
		(Y_{t}-\theta Y_{t}^{\prime})^{+}
		\leq (1-\theta)\left\{{\rm{ln}}\left[K {\mathbb{E}}_{t}\left[{\rm exp}\left(K\left(\xi^{+}+\int_{0}^{T}\hat{f_{s}}ds \right)^{\frac{2}{\alpha^{*}}}\right) \right]\right]\right\}^{\frac{\alpha^{*}}{2}}.
		$$
		Consequently, the desired conclusion follows by sending $\theta \rightarrow 1$ in the last inequality. That is to say, $\mathbb{P}-a.s.$, for each $t\in[0,T]$, $Y_{t}\leq Y_{t}^{\prime}$.
		
		{\bf Step 3.} We discuss the case that the generator $g^{\prime}$ satisfies  $(i)$ of assumption (UN), and ${\rm{d}}\mathbb{P}\times {\rm{d}}t-a.e.$, $g(t,Y_{t},Z_{t})\leq g^{\prime}(t,Y_{t},Z_{t})$. In this case the generator $\delta_{\theta}g$ in  $(\ref{eq4.9})$ should be replaced by the following equality
		$$
		\begin{aligned}
			\delta_{\theta}g(s,y,z):=&\frac{1}{1-\theta}\left[g(s, Y_{s}, Z_{s})-g^{\prime}(s, Y_{s}, Z_{s})\right]\\
			&+\frac{1}{1-\theta}[g^{\prime}(s,(1-\theta)y+\theta Y_{s}^{\prime},(1-\theta)z+\theta Z_{s}^{\prime})-\theta g^{\prime}(s, Y_{s}^{\prime}, Z_{s}^{\prime})].
		\end{aligned}
		$$
		It is easy to verify that $\delta_{\theta}g$ satisfies inequality $(\ref{eq4.10})$. By an identical argument as in  Step 2, we  get the desired conclusion.

		{\bf Step 4.} Suppose that the generator $g$ satisfies  $(ii)$ of assumption (UN), and  ${\rm{d}}\mathbb{P}\times {\rm{d}}t-a.e.$, $g(t,Y_{t}^{\prime},Z_{t}^{\prime})\leq g^{\prime}(t,Y_{t}^{\prime},Z_{t}^{\prime})$. Let us set for each $t\in[0,T]$,
		$$
		\hat{\xi}:=-\xi,~~~\hat{Y_{t}}:=-Y_{t},~~~\hat{Z_{t}}:=-Z_{t},~~~\hat{g}(t,y,z):=-g(t,-y,-z)
		$$
		and
		$$
		\hat{\xi^{\prime}}:=-\xi^{\prime},~~~\hat{Y^{\prime}_{t}}:=-Y^{\prime}_{t},~~~\hat{Z^{\prime}_{t}}:=-Z^{\prime}_{t},~~~\hat{g}^{\prime}(t,y^{\prime},z^{\prime}):=-g^{\prime}(t,-y^{\prime},-z^{\prime}).
		$$
		Then,  $(\hat{Y}_{t},\hat{Z}_{t})_{t\in[0,T]}$ and $(\hat{Y}^{\prime}_{t},\hat{Z}^{\prime}_{t})_{t\in[0,T]}$ are respectively a solution of BSDE $(\hat{\xi},\hat{g})$ and BSDE $(\hat{\xi}^{\prime},\hat{g}^{\prime})$. And it is easy to verify that $\mathbb{P}-a.s.$, $\hat{\xi}^{\prime}\leq \hat{\xi}$,  $\hat{g}$ satisfies $(i)$ of assumption (UN), and d$\mathbb{P}\times$ d$t-a.e.$,
		$$
			\hat{g}^{\prime}(t,\hat{Y}^{\prime}_{t},\hat{Z}^{\prime}_{t})
			=-g^{\prime}(t,Y^{\prime}_{t},Z^{\prime}_{t})
			\leq -g(t,Y^{\prime}_{t},Z^{\prime}_{t})
			= \hat{g}(t,\hat{Y}^{\prime}_{t},\hat{Z}^{\prime}_{t}).
		$$
		Thus, by virtue of Step 3, we know that $\mathbb{P}-a.s.$, for each $t\in[0,T]$,
		$$
		-Y^{\prime}_{t}=\hat{Y}^{\prime}_{t}\leq \hat{Y}_{t}=-Y_{t},
		$$
		which implies $Y_{t}\leq Y_{t}^{\prime}$. In the same way, we can prove the case that the generator $g^{\prime}$ satisfies  $(ii)$ of assumption (UN), and ${\rm{d}}\mathbb{P}\times {\rm{d}}t-a.e.$, $g(t,Y_{t},Z_{t})\leq g^{\prime}(t,Y_{t},Z_{t})$. The proof  is then complete.
	\end{proof}

	\subsection{Existence and uniqueness}\label{Title:4.2}
	According to Theorems \ref{The:3.2} and  \ref{The:4.2}, the following general existence and uniqueness theorem for unbounded solutions of BSDEs follows immediately, which generalizes Theorem 3.9 in Fan and Hu \cite{12}.
	
	\begin{theorem}\label{The:4.3}
		Assume that $\xi$ is a terminal value and $g$ is a generator satisfying assumptions (EX1), (EX2) and (UN). If  $|\xi|+\int_{0}^{T}f_{t}dt$ has sub-exponential moments of any order, i.e.,
		$$
		{\mathbb{E}}\left[{\rm exp}\left(p\left(|\xi|+\int_{0}^{T}f_{s}ds \right)^{\frac{2}{\alpha^{*}}} \right)\right]<+\infty,~~~~\forall~ p>1,
		$$
		then BSDE ($\xi,g$) admits a unique solution $(Y_{t},Z_{t})_{t\in[0,T]}$ such that $\sup\limits_{t\in[0,T]}|Y_{t}|$ has sub-exponential moments of any order and $Z_{\cdot}\in\mathcal{M}^{p}(0,T;\mathbb{R}^{d})$ for all $p>1$.
	\end{theorem}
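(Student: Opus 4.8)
The plan is to obtain the two halves of the statement from the two main theorems already established: existence (together with the claimed integrability of $\sup_t|Y_t|$ and of $Z_\cdot$) is a direct application of Theorem~\ref{The:3.2}, while uniqueness is extracted from the comparison theorem~\ref{The:4.2} applied to two candidate solutions of the \emph{same} BSDE.

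For existence, I would first note that assumptions (EX1) and (EX2) together with the sub-exponential moment hypothesis \eqref{eq3.1} are precisely the hypotheses of Theorem~\ref{The:3.2}. Hence that theorem produces a solution $(Y_t,Z_t)_{t\in[0,T]}$ satisfying the a~priori bound \eqref{eq3.3}. Taking $t=0$ in \eqref{eq3.3}, the right-hand side is finite for every $p>1$ by the moment assumption on $|\xi|+\int_0^T f_s\,ds$; this gives at once that $\sup_{t\in[0,T]}|Y_t|$ has sub-exponential moments of any order and that $Z_\cdot\in\mathcal{M}^p(0,T;\mathbb{R}^d)$ for all $p>1$, which is exactly the asserted integrability.

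For uniqueness, suppose $(Y,Z)$ and $(Y',Z')$ are two solutions of BSDE $(\xi,g)$ both enjoying this integrability. I would first check that the pair meets the hypothesis \eqref{eq4.4} of Theorem~\ref{The:4.2}: since $\alpha^*>2$ we have $2/\alpha^*<1$, so subadditivity of $x\mapsto x^{2/\alpha^*}$ gives $(\sup_t(|Y_t|+|Y'_t|))^{2/\alpha^*}\le(\sup_t|Y_t|)^{2/\alpha^*}+(\sup_t|Y'_t|)^{2/\alpha^*}$, and one bounds the exponential moment of the sum by the Cauchy--Schwarz inequality, each factor being finite by the sub-exponential moments of $Y$ and $Y'$ separately; the requirement $Z_\cdot,Z'_\cdot\in\mathcal{M}^p$ is part of the integrability already noted. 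Now I apply Theorem~\ref{The:4.2} with $g'=g$ and $\xi'=\xi$: since $g$ satisfies (UN) and \eqref{eq4.5} holds trivially (with equality) when the two generators coincide, it yields $Y_t\le Y'_t$ for all $t$; exchanging the roles of the two solutions gives $Y'_t\le Y_t$, hence $Y_\cdot=Y'_\cdot$.

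It remains to deduce $Z=Z'$ from $Y=Y'$. Subtracting the two equations and using $g(s,Y'_s,Z'_s)=g(s,Y_s,Z'_s)$ gives $\int_t^T(Z_s-Z'_s)\cdot dB_s=\int_t^T[g(s,Y_s,Z_s)-g(s,Y_s,Z'_s)]\,ds$, so the continuous local martingale $\int_0^\cdot(Z_s-Z'_s)\cdot dB_s$ coincides with a continuous process of finite variation and is therefore constant; its quadratic variation $\int_0^T|Z_s-Z'_s|^2\,ds$ then vanishes, giving $Z=Z'$. The computation is routine; the only point requiring genuine care is the verification that the combined bound \eqref{eq4.4} holds, so that the comparison theorem is actually applicable — everything else is bookkeeping, which is why the result is announced as an immediate consequence of Theorems~\ref{The:3.2} and \ref{The:4.2}.
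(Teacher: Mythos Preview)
Your proposal is correct and follows exactly the approach the paper intends: the paper itself gives no detailed proof, stating only that the result ``follows immediately'' from Theorems~\ref{The:3.2} and~\ref{The:4.2}, and you have faithfully supplied the routine details (existence and integrability from \eqref{eq3.3}, uniqueness by applying the comparison theorem twice with the roles of the two solutions exchanged, then the standard local-martingale-of-finite-variation argument for $Z=Z'$). Your verification of \eqref{eq4.4} via subadditivity of $x\mapsto x^{2/\alpha^*}$ and Cauchy--Schwarz is the natural way to handle that point.
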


	\section{Sufficient conditions ensuring the uniqueness holds} \label{Title:5}
In this section, we will put forward and prove several sufficient conditions ensuring that assumption (UN) holds,  some of which are explored at the first time even though for the finite time interval case. For narrative convenience, we first consider a special type of assumption (UN), i.e., the following ($\rm{UN}^{\prime}$).
	
	\begin{enumerate}
		\renewcommand{\theenumi}{($\rm{UN}^{\prime}$\alph{enumii})}
		\renewcommand{\labelenumi}{\theenumi}
		\item\label{UN2} $g$ satisfies either of the following two conditions:
		
		$(i)$~d$\mathbb{P}\times$ d$t-a.e.$, for each $(y_{i},z_{i})\in  \mathbb{R}^{1+d},~i=1,2$ and  $\theta \in(0,1)$,
		\begin{equation}\label{eq5.1}
			\tag{5.1}
			\begin{aligned}
				&{{\bf 1}}_{\left\{y_{1}-\theta y_{2}>0\right\}}(g(\omega,t,y_{1},z_{1})-\theta g(\omega,t,y_{2},z_{2}))\\
				&~\leq (1-\theta)(f_{t}(\omega)+\beta(t)|y_{2}|+\beta(t)|\delta_{\theta}y|+\gamma(t)|\delta_{\theta}z|^{\alpha});\\	
			\end{aligned}
		\end{equation}	
		
		$(ii)$~d$\mathbb{P}\times$ d$t-a.e.$, for each $(y_{i},z_{i})\in  \mathbb{R}^{1+d},~i=1,2$ and $\theta\in(0,1)$,
		\begin{equation}\label{eq5.2}
			\tag{5.2}
			\begin{aligned}
				&-{{\bf 1}}_{\left\{y_{1}-\theta y_{2}<0\right\}}(g(\omega,t,y_{1},z_{1})-\theta g(\omega,t,y_{2},z_{2}))\\
				&~\leq (1-\theta)(f_{t}(\omega)+\beta(t)|y_{2}|+\beta(t)|\delta_{\theta}y|+\gamma(t)|\delta_{\theta}z|^{\alpha}),\\
			\end{aligned}
		\end{equation}
		where  $\delta_{\theta}y$ and $\delta_{\theta}z$ defined in assumption (UN).	
	\end{enumerate}
	
	The following Propositions \ref{Pro:1} and \ref{Pro:2} present some sufficient conditions ensuring that assumption ($\rm{UN}^{\prime}$) holds, which  are  the first two main results of this section.

	\begin{proposition}\label{Pro:1}
		Assume that the generator $g$ satisfies assumption
		(EX1). Then, assumption ($UN^{\prime}$) holds for $g$ if it satisfies either of the following two conditions:
		
		$(i)~{\rm{d}}\mathbb{P}\times {\rm{d}}t-a.e.,~g(\omega,t,\cdot,\cdot)$ is convex or concave;

		$(ii)~g(t,y,z)=\gamma(t)l(y)q(z)$, where both $l: \mathbb{R}\mapsto \mathbb{R}$ and $q:\mathbb{R}^{d}\mapsto \mathbb{R}$ are  bounded Lipschitz continuous functions, the function $q(z)$ has a bounded support and $\gamma(t)\in L^{1}([0,T];\mathbb{R}_{+})\cap L^{\frac{2}{2-\alpha}}([0,T];\mathbb{R}_{+})$.
	\end{proposition}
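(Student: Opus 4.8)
The plan is to verify directly that, under each set of hypotheses, the generator $g$ satisfies one of the two inequalities (5.1) or (5.2) defining $(\mathrm{UN}^{\prime})$. Throughout I would use the two identities
\[
y_{1}=\theta y_{2}+(1-\theta)\delta_{\theta}y,\qquad z_{1}=\theta z_{2}+(1-\theta)\delta_{\theta}z,
\]
together with the observation that $\{y_{1}-\theta y_{2}>0\}=\{\delta_{\theta}y>0\}$ and $\{y_{1}-\theta y_{2}<0\}=\{\delta_{\theta}y<0\}$, since $1-\theta>0$. I will also use freely that one may enlarge $f_{\cdot}$ and $\beta(\cdot)$ without leaving the standing integrability/moment classes; this is precisely where the extra hypothesis $\gamma(\cdot)\in L^{1}$ in part $(ii)$ is used.

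For part $(i)$, suppose first $g(\omega,t,\cdot,\cdot)$ is convex. By the identities above and convexity, $g(\omega,t,y_{1},z_{1})\le\theta g(\omega,t,y_{2},z_{2})+(1-\theta)g(\omega,t,\delta_{\theta}y,\delta_{\theta}z)$, so $g(\omega,t,y_{1},z_{1})-\theta g(\omega,t,y_{2},z_{2})\le(1-\theta)g(\omega,t,\delta_{\theta}y,\delta_{\theta}z)$. On $\{\delta_{\theta}y>0\}$ we have $\mathrm{sgn}(\delta_{\theta}y)=1$, so assumption (EX1) evaluated at the point $(\delta_{\theta}y,\delta_{\theta}z)$ bounds $g(\omega,t,\delta_{\theta}y,\delta_{\theta}z)$ by $f_{t}+\beta(t)|\delta_{\theta}y|+\gamma(t)|\delta_{\theta}z|^{\alpha}$; multiplying by the indicator yields (5.1). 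The concave case is symmetric: concavity reverses the inequality, and on $\{\delta_{\theta}y<0\}$ one has $\mathrm{sgn}(\delta_{\theta}y)=-1$, so (EX1) now controls $-g(\omega,t,\delta_{\theta}y,\delta_{\theta}z)$ and produces (5.2). This is the routine part.

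For part $(ii)$, with $g(t,y,z)=\gamma(t)l(y)q(z)$, I would abandon convexity and estimate the $\theta$-difference termwise through the decomposition
\[
l(y_{1})q(z_{1})-\theta l(y_{2})q(z_{2})=[l(y_{1})-l(y_{2})]q(z_{1})+l(y_{2})[q(z_{1})-q(z_{2})]+(1-\theta)l(y_{2})q(z_{2}).
\]
Writing $M_{l},M_{q}$ for the sup-norms and $L_{l},L_{q}$ for the Lipschitz constants, the third term is at most $(1-\theta)\gamma(t)M_{l}M_{q}$ and joins $f_{\cdot}$. The first term is bounded, after substituting $y_{1}-y_{2}=(1-\theta)(\delta_{\theta}y-y_{2})$, by $(1-\theta)\gamma(t)L_{l}M_{q}(|\delta_{\theta}y|+|y_{2}|)$; since $\gamma(\cdot)\in L^{1}$, the function $\gamma(\cdot)L_{l}M_{q}$ is an admissible $\beta$, so this term is absorbed into $(1-\theta)\beta(t)(|\delta_{\theta}y|+|y_{2}|)$. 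Because every bound is taken in absolute value, both signs are controlled simultaneously, so (5.1) and (5.2) will both follow.

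The main obstacle is the middle term $\gamma(t)l(y_{2})[q(z_{1})-q(z_{2})]$: the Lipschitz bound $|q(z_{1})-q(z_{2})|\le L_{q}|z_{1}-z_{2}|=L_{q}(1-\theta)|\delta_{\theta}z-z_{2}|$ generates the cross term $(1-\theta)\gamma(t)|z_{2}|$, which is not among the quantities permitted by $(\mathrm{UN}^{\prime})$. Here the bounded support of $q$ is decisive, since $q(z_{1})-q(z_{2})$ vanishes unless $z_{1}$ or $z_{2}$ lies in $\mathrm{supp}(q)\subseteq\overline{B}(0,R)$. On $\{|z_{2}|\le R\}$ one simply replaces $|z_{2}|$ by $R$, turning the offending term into $(1-\theta)\gamma(t)L_{q}M_{l}R$, which joins $f_{\cdot}$. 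On the straddling event $\{|z_{1}|\le R<|z_{2}|\}$ we have $q(z_{2})=0$, and I would eliminate $|z_{2}|$ through $|z_{2}|\le|z_{1}-z_{2}|+R$, which after rearrangement gives $\theta|z_{1}-z_{2}|\le(1-\theta)(|\delta_{\theta}z|+R)$; splitting on $\theta\ge\tfrac12$ (where $\theta^{-1}\le2$) and $\theta\le\tfrac12$ (where $1-\theta\ge\tfrac12$, so the bounded quantity $\gamma(t)M_{l}M_{q}$ is covered by the $f_{\cdot}$-term) controls this contribution by $(1-\theta)\gamma(t)(|\delta_{\theta}z|+R)$ up to constants. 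Finally the elementary inequality $|\delta_{\theta}z|\le1+|\delta_{\theta}z|^{\alpha}$ converts the linear $z$-dependence into the admissible $\gamma(t)|\delta_{\theta}z|^{\alpha}$ plus a constant into $f_{\cdot}$. Collecting the three terms and using $\gamma(\cdot)\in L^{1}\cap L^{\frac{2}{2-\alpha}}$ to keep every enlarged coefficient within the standing classes establishes $(\mathrm{UN}^{\prime})$ for $g$.
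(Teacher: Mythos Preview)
Your proof is correct. The paper itself omits the argument entirely, stating only that it is similar to (i) and (iii) of Proposition~3.5 in Fan and Hu~\cite{12}; your direct verification---using convexity/concavity together with (EX1) at $(\delta_\theta y,\delta_\theta z)$ for part~(i), and the three-term decomposition with a bounded-support/$\theta$-splitting argument for part~(ii)---is precisely the standard route that reference follows, so there is nothing substantive to compare.
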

	
	The proof of Proposition \ref{Pro:1} is similar to that of (i) and (iii) of Proposition 3.5 in Fan and Hu \cite{12}.  We omit it here.

	\begin{remark}\label{Re:5.1}
		Let $g(\omega,t,y,z):\equiv g_{1}(y)+g_{2}(y)$ and $\bar{g}(\omega,t,y,z):\equiv g_{3}(z)+g_{4}(z)$ be defined as in page 38 of Fan and Hu \cite{12}. Then, it is not difficult to verify that $\beta(t)g(\omega,t,y,z)$ and $\gamma(t)\bar{g}(\omega,t,y,z)$ satisfy  assumption ($\rm{UN}^{\prime}$). In addition, it is clear  that if $g(\omega,t,y,z)$ satisfies (i) of ($\rm{UN}^{\prime}$), then $\hat{g}(\omega,t,y,z):=-g(\omega,t,-y,-z)$ satisfies (ii) of ($\rm{UN}^{\prime}$).
	\end{remark}
	
	   Next, let $a\geq 0$, $d=1$, $u(\cdot),k_{1}(\cdot),k_{2}(\cdot)\in L^{1}([0,T];\mathbb{R}_{+})$,  $v(\cdot),c_{1}(\cdot),c_{2}(\cdot),c_{3}(\cdot)\in L^{1}([0,T];\mathbb{R}_{+})\cap L^{\frac{2}{2-\alpha}}([0,T];\mathbb{R}_{+})$. In order to explore other sufficient conditions ensuring that assumption ($\rm{UN}^{\prime}$) holds, we  introduce the following assumptions on the generator $g$.
	
	\begin{enumerate}
		\renewcommand{\theenumi}{(A\arabic{enumi})}
		\renewcommand{\labelenumi}{\theenumi}
		\item\label{A1} $g$ has a time-varying linear growth in $y$ and a time-varying sub-quadratic growth in $z$, i.e.,  ${\rm{d}}\mathbb{P}\times {\rm{d}}t-a.e.,$ for each $(y,z)\in \mathbb{R}\times \mathbb{R}$, we have
		$$
		|g(\omega,t,y,z)|\leq  f_{t}(\omega)+u(t)|y|+v(t)|z|^{\alpha}.
		$$

		\item\label{A2}  ${\rm{d}}\mathbb{P}\times {\rm{d}}t-a.e.,$ for each $z\in\mathbb{R}$, we have
		
		(i) $g(\omega,t,\cdot,z)$ satisfies a time-varying monotonicity condition on $\mathbb{R}_{-}$, i.e.,
		$$
		\forall~ (y_{1},y_{2})\in \mathbb{R}_{-} \times \mathbb{R}_{-},~~{\rm{sgn}}(y_{1}-y_{2})(g(\omega,t,y_{1},z)-g(\omega,t,y_{2},z))\leq k_{1}(t)|y_{1}-y_{2}|.
		$$

		(ii) $g(\omega,t,\cdot,z)$ satisfies a time-varying Lipschitz continuity condition on $\mathbb{R}_{+}$, i.e.,
		$$
		\forall~ (y_{1},y_{2})\in \mathbb{R}_{+} \times \mathbb{R}_{+},~~|g(\omega,t,y_{1},z)-g(\omega,t,y_{2},z)|\leq k_{2}(t)|y_{1}-y_{2}|.
		$$
		
		\item\label{A3}  ${\rm{d}}\mathbb{P}\times {\rm{d}}t-a.e.,$ for each $y\in\mathbb{R}$, we have
		
		(i) $g(\omega,t,y,\cdot)$ is a time-varying Lipschitz continuous function on $[-a,a]$, i.e.,
		$$
		\forall~ (z_{1},z_{2})\in  [-a,a] \times [-a,a],~~|g(\omega,t,y,z_{1})-g(\omega,t,y,z_{2})|\leq c_{1}(t)|z_{1}-z_{2}|.
		$$
		
		(ii) $g(\omega,t,y,\cdot)$ is  convex on $\left(-\infty,-a\right]$ and $\left[a,+\infty\right)$, respectively.
		
		\item\label{A4}  ${\rm{d}}\mathbb{P}\times {\rm{d}}t-a.e.,$ for each $y\in\mathbb{R}$, we have
		$$
		\forall~z\in  \left[a,+\infty\right),~~~g(\omega,t,y,z)-g(\omega,t,y,a)\geq -c_{2}(t)(z-a)
		$$
		and
		$$
		\forall~z\in  \left(-\infty,-a\right],~~~g(\omega,t,y,z)-g(\omega,t,y,-a)\geq c_{3}(t)(z+a).
		$$
	\end{enumerate}

	\begin{remark}\label{Re:5.2}
		$(i)$ If $g(\omega,t,\cdot,z)$ satisfies the time-varying Lipschitz continuity condition on $\mathbb{R}$, i.e.,  ${\rm{d}}\mathbb{P}\times {\rm{d}}t-a.e.$, for each $z\in\mathbb{R}$, we have
		$$
		\forall~(y_{1},y_{2})\in\mathbb{R} \times \mathbb{R},~~~|g(\omega,t,y_{1},z)-g(\omega,t,y_{2},z)|\leq \beta(t)|y_{1}-y_{2}|,
		$$
		then $g(\omega,t,\cdot,z)$  satisfies assumption (A2). Consequently, assumption (A2) generalizes the time-varying Lipschitz continuity condition of the generator $g$ with respect to $y$.
		
		$(ii)$ If $g(\omega,t,y,\cdot)$ is  convex on $\mathbb{R}$, then $g(\omega,t,y,\cdot)$  satisfies  assumption (A3) with $a=0$. Consequently, assumption (A3) generalizes the convexity condition of the generator $g$ with respect to $z$.
		
		$(iii)$ If $g(\omega,t,y,\cdot)$ satisfies (ii) of (A3), and has a right derivative at  point $a$  with a lower bound $-c_{2}(t)$ and a left derivative at  point $-a$ with a supper bound $c_{3}(t)$, then $g(\omega,t,y,\cdot)$ satisfies assumption (A4).
	\end{remark}

	\begin{proposition}\label{Pro:2}
		Let $d=1$. If the generator $g$  satisfies assumptions (H1),  (A1),  (A2), (A3) and (A4), then it also satisfies (i) of assumption ($UN^{\prime}$).
	\end{proposition}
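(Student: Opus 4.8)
The plan is to verify inequality (\ref{eq5.1}) directly on the event $\{y_1-\theta y_2>0\}$, i.e. when $\delta_\theta y=\frac{y_1-\theta y_2}{1-\theta}>0$, by decoupling the roles of $z$ and $y$. Throughout I write $y_1=\theta y_2+(1-\theta)\delta_\theta y$ and $z_1=\theta z_2+(1-\theta)\delta_\theta z$, so that both $y_1$ and $z_1$ are the $\theta$-weighted barycentres of the corresponding pairs. First I would freeze the first variable at $y_1$ and exploit the partial convexity of $z\mapsto g(\omega,t,y_1,z)$. If $g(\omega,t,y_1,\cdot)$ were convex on all of $\mathbb{R}$ this step would be immediate; the only difficulty is that (A3) grants convexity merely on $(-\infty,-a]$ and $[a,+\infty)$, with Lipschitz control on $[-a,a]$ and the one-sided linear lower bounds (A4) linking the two convex branches. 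This is exactly what the technical lemmas in the Appendix are tailored for, and I would invoke them to obtain an approximate-convexity estimate of the form
\begin{equation*}
g(\omega,t,y_1,z_1)\le \theta\,g(\omega,t,y_1,z_2)+(1-\theta)\,g(\omega,t,y_1,\delta_\theta z)+(1-\theta)\,\big(c_1(t)+c_2(t)+c_3(t)\big)\big(a+|\delta_\theta z|\big),
\end{equation*}
the point being that the lemmas guarantee the error carries the decisive factor $(1-\theta)$ and involves only $\delta_\theta z$ and constants, never $z_2$.

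The second ingredient is the $y$-variation $\theta\,[g(\omega,t,y_1,z_2)-g(\omega,t,y_2,z_2)]$ left behind by the previous display. Here I would use (A2) together with a case analysis on the signs of $y_1,y_2$; because (A2) is uniform in $z$ the variable $z_2$ drops out entirely. If $y_2\ge 0$ then $y_1\ge 0$ and (A2)(ii) gives $k_2(t)|y_1-y_2|=(1-\theta)k_2(t)|\delta_\theta y-y_2|$; if $y_1<0$ then necessarily $y_2<y_1<0$ and (A2)(i) applies on $\mathbb{R}_-$, again producing a factor $|y_1-y_2|=(1-\theta)|\delta_\theta y-y_2|$. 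The genuinely delicate case is $y_1\ge 0>y_2$: splitting at $0$ I combine (A2)(ii) on $[0,y_1]$ with (A2)(i) on $[y_2,0]$ to get $k_2(t)y_1+k_1(t)|y_2|$, and then absorb the stray term $\theta k_1(t)|y_2|$ by means of the elementary inequality $\theta|y_2|\le(1-\theta)\delta_\theta y$, which is forced precisely by $y_1\ge0$ and $\delta_\theta y>0$. In all three cases one reaches $\theta[g(\omega,t,y_1,z_2)-g(\omega,t,y_2,z_2)]\le(1-\theta)(k_1(t)+k_2(t))(|y_2|+|\delta_\theta y|)$; recovering the $(1-\theta)$ weight on the $|y_2|$ contribution in this middle case is, I expect, the main obstacle of the $y$-step.

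It then remains to control the barycentric term $(1-\theta)g(\omega,t,y_1,\delta_\theta z)$ by the growth assumption (A1), namely $g(\omega,t,y_1,\delta_\theta z)\le f_t+u(t)|y_1|+v(t)|\delta_\theta z|^\alpha$, together with $|y_1|\le(1-\theta)|\delta_\theta y|+|y_2|$. Collecting the three estimates and applying Young's inequality to convert the linear error $|\delta_\theta z|$ into $|\delta_\theta z|^\alpha$ plus a deterministic constant (legitimate since $\alpha>1$), I would finally read off (\ref{eq5.1}) with
\[
\beta(t):=u(t)+k_1(t)+k_2(t),\qquad \gamma(t):=v(t)+c_1(t)+c_2(t)+c_3(t),
\]
after enlarging $f_t$ by a deterministic $L^1$-term of the form $(c_1+c_2+c_3)(t)\,a$. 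By hypothesis $u,k_1,k_2\in L^1([0,T];\mathbb{R}_+)$ and $v,c_1,c_2,c_3\in L^{\frac{2}{2-\alpha}}([0,T];\mathbb{R}_+)$, so $\beta(\cdot)$ and $\gamma(\cdot)$ lie in the required spaces and the enlarged $f_\cdot$ retains sub-exponential moments of every order; this confirms that $g$ satisfies (i) of ($\rm{UN}^{\prime}$). Overall, the analytic heart of the argument is the partial-convexity estimate in $z$ furnished by the Appendix lemmas, while the sign device $\theta|y_2|\le(1-\theta)\delta_\theta y$ is the small but essential observation that makes the mixed-sign $y$-case close with the correct $(1-\theta)$ factor.
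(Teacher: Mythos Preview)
Your approach is correct and essentially identical to the paper's: both split $g(y_1,z_1)-\theta g(y_2,z_2)$ into a $z$-variation handled by the partial-convexity Lemma~\ref{Le:4.5} and a $y$-variation handled by the sign case analysis of Lemma~\ref{Le:4.4} (your inequality $\theta|y_2|\le(1-\theta)\delta_\theta y$ in the mixed-sign case is precisely the identity $y_1-\theta y_2=y_1+\theta|y_2|$ exploited there). One cosmetic point: Lemma~\ref{Le:4.5} actually produces the envelope term $\phi(|\delta_\theta z|+2a)$ rather than the barycentric value $g(\omega,t,y_1,\delta_\theta z)$ you wrote down, but since your very next step is to bound the latter by $\phi(|\delta_\theta z|)$ via (A1), the distinction is harmless and the argument closes with the same final $\beta(\cdot)$ and $\gamma(\cdot)$.
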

	
	\begin{proof}
		Given arbitrarily $(y_{i}, z_{i})\in \mathbb{R}\times \mathbb{R}$, $i=1,2$ and $\theta\in(0,1)$, and let $\delta_{\theta}y$ and $\delta_{\theta}z$ be defined in assumption ($\rm{UN}^{\prime}$). It follows from  assumptions (H1) and (A2) together with Lemma \ref{Le:4.4} that
		\begin{equation}\label{eq5.3}
			\tag{5.3}
			\begin{aligned}
				&{\bf{1}}_{\left\{y_{1}>\theta y_{2}\right\}}[g(\omega, t, y_{1},z_{2})-\theta g(\omega,t,y_{2},z_{2})]\\
				&~\leq (1-\theta)\left[(k_{1}(t)+ k_{2}(t))|\delta_{\theta}y|+(k_{1}(t)+k_{2}(t))|y_{2}|+g(\omega,t,y_{2},z_{2})\right].
			\end{aligned}
		\end{equation}
	    Furthermore, define
	    $$
	    \phi(\omega,t,y_{1},|z|):= f_{t}(\omega)+u(t)|y_{1}|+v(t)|z|^{\alpha},
	    $$
	    where   $\phi(\omega,t,y_{1},\cdot):\mathbb{R}_{+}\mapsto\mathbb{R}_{+}$ is a nondecreasing continuous function.
	     In view of  assumptions (H1), (A1) (A3) and (A4) and Lemma \ref{Le:4.5} together with (iii) of Remark \ref{Re:5.2}, we get that
		$$
		\begin{aligned}
			&g(\omega, t, y_{1},z_{1})-\theta g(\omega,t,y_{1},z_{2})\\&~\leq (1-\theta)\left[\phi(\omega,t,y_{1},|\delta_{\theta}z|+2a)+2 c(t)|\delta_{\theta}z|+11c(t)a+22\phi(\omega,t,y_{1},a)\right]\\
			&~= (1-\theta)\left[23f_{t}(\omega)+23u(t)|y_{1}|+v(t)||\delta_{\theta}z|+2a|^{\alpha}+22v(t)a^{\alpha}+2c(t)|\delta_{\theta}z|+11c(t)a\right]\\
			&~\leq (1-\theta)\left[23f_{t}(\omega)+23u(t)|y_{1}|+2^{\alpha}v(t)(|\delta_{\theta}z|^{\alpha}+(2a)^{\alpha})+22v(t)a^{\alpha}+2c(t)|\delta_{\theta}z|+11c(t)a\right]\\
			&~\leq (1-\theta)\left[23f_{t}(\omega)+23u(t)|y_{1}-\theta y_{2}|+23u(t)\theta|y_{2}|+4v(t)|\delta_{\theta}z|^{\alpha}+38v(t)a^{\alpha}+2c(t)|\delta_{\theta}z|+11c(t)a\right]\\
			&~\leq (1-\theta)\left[\bar{f}_{t}(\omega)+23u(t)|y_{2}|+23u(t)|\delta_{\theta}y|+(4v(t)+2 c(t))|\delta_{\theta}z|^{\alpha}\right],
		\end{aligned}
		$$
		where $c(t):=c_{1}(t)\vee c_{2}(t)\vee c_{3}(t)$, and the process $\bar{f}_{t}(\omega):=23f_{t}(\omega)+38v(t)a^{\alpha}+2c(t)+11c(t)a$ has sub-exponential moments of any order.
		Combining inequality (\ref{eq5.3}) and the last inequality, we have
		$$
		\begin{aligned}
			&{\bf{1}}_{\left\{y_{1}>\theta y_{2}\right\}}[g(\omega, t, y_{1},z_{1})-\theta g(\omega,t,y_{2},z_{2})]\\
			&~= {\bf{1}}_{\left\{y_{1}>\theta y_{2}\right\}}[g(\omega, t, y_{1},z_{1})-\theta g(\omega, t, y_{1},z_{2})]+{\bf{1}}_{\left\{y_{1}>\theta y_{2}\right\}}\theta[g(\omega, t, y_{1},z_{2})-\theta g(\omega, t, y_{2},z_{2})]\\
			&~~~~~-{\bf{1}}_{\left\{y_{1}>\theta y_{2}\right\}}\theta(1-\theta)g(\omega, t, y_{2},z_{2})\\
			&~\leq  {\bf{1}}_{\left\{y_{1}>\theta y_{2}\right\}}[g(\omega, t, y_{1},z_{1})-\theta g(\omega, t, y_{1},z_{2})]+(1-\theta)\theta \left[(k_{1}(t)+ k_{2}(t))|\delta_{\theta}y|+(k_{1}(t)+k_{2}(t))|y_{2}|\right]\\
			&~\leq (1-\theta) [\bar{f}_{t}(\omega)+(23u(t)+k_{1}(t)+k_{2}(t))|y_{2}|+(23u(t)+k_{1}(t)+ k_{2}(t))|\delta_{\theta}y|+(4v(t)+2c(t))|\delta_{\theta}z|^{\alpha}].
		\end{aligned}
		$$
		Then, the desired inequality $(\ref{eq5.1})$ holds with $f_{t}(\omega):=\bar{f}_{t}(\omega)$, $\beta(t):=23u(t)+k_{1}(t)+k_{2}(t)$ and $\gamma(t):=v(t)+  2c(t)$. The proof is then complete.
	\end{proof}

	Next, let $d=1$,  $\bar{u}(\cdot)\in L^{1}([0,T];\mathbb{R}_{+})$, and $ \bar{c}(\cdot), \bar{v}(\cdot)\in L^{\frac{2}{2-\alpha}}([0,T];\mathbb{R}_{+})$ satisfying $0<\int_{0}^{T}\bar{c}(t)dt<+\infty$ and  $0<\int_{0}^{T}\bar{v}(t)dt<+\infty$. In order to explore sufficient conditions ensuring that assumption (UN) holds, we  introduce the following assumptions on the generator $g$.
	
	\begin{enumerate}
		\renewcommand{\theenumi}{(A5\alph{enumii})}
		\renewcommand{\labelenumi}{\theenumi}
		\item\label{A5}  $g$ has a time-varying linear growth in $y$ and a time-varying logarithmic growth in $z$, i.e.,  ${\rm{d}}\mathbb{P}\times {\rm{d}}t-a.e.,$ for each $(y,z)\in \mathbb{R}\times \mathbb{R}$, we have
		$$
		|g(\omega,t,y,z)|\leq  f_{t}(\omega)+\bar{u}(t)|y|+\bar{v}(t)\left[{\rm{ln}}\left(e+|z|\right)\right]^{\frac{\alpha^{*}}{2}}.
		$$
	\end{enumerate}

	\begin{enumerate}
		\renewcommand{\theenumi}{(A6\alph{enumii})}
		\renewcommand{\labelenumi}{\theenumi}
		\item\label{A6} There exists a constant $a\geq 0$ such that ${\rm{d}}\mathbb{P}\times {\rm{d}}t-a.e.,$ for each $y\in\mathbb{R}$, we have
		
		(i)  $g(\omega,t,y,\cdot)$ is a time-varying Lipschitz continuous function on $\mathbb{R}$, i.e.,
		$$
		\forall~ (z_{1},z_{2})\in  \mathbb{R} \times \mathbb{R},~~|g(\omega,t,y,z_{1})-g(\omega,t,y,z_{2})|\leq \bar{c}(t)|z_{1}-z_{2}|.
		$$
		
		(ii) $g(\omega,t,y,\cdot)$ decreases  on $\left(-\infty,-a\right]$ and increases  on $\left[a,+\infty\right)$.
	\end{enumerate}

	The following Proposition \ref{Pro:3} is the third main result of this section.
	
	\begin{proposition}\label{Pro:3}
		Let $d=1$. If the generator $g$ satisfies assumptions (H1), (A2),  (A5) and (A6), then it also satisfies (i) of assumption (UN).
	\end{proposition}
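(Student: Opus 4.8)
The plan is to follow the scheme of the proof of Proposition \ref{Pro:2}, keeping its treatment of the $y$-variable intact (it rests only on (H1) and (A2)) while replacing the convexity-based treatment of the $z$-variable, assumptions (A3)--(A4), by an argument that exploits the monotonicity and global Lipschitz structure of (A6) together with the logarithmic growth (A5). Fix $(y_i,z_i)\in\mathbb{R}\times\mathbb{R}$, $i=1,2$, and $\theta\in(0,1)$, and start from the same algebraic identity
\begin{equation*}
g(\omega,t,y_1,z_1)-\theta g(\omega,t,y_2,z_2)=\big[g(\omega,t,y_1,z_1)-\theta g(\omega,t,y_1,z_2)\big]+\theta\big[g(\omega,t,y_1,z_2)-\theta g(\omega,t,y_2,z_2)\big]-\theta(1-\theta)g(\omega,t,y_2,z_2),
\end{equation*}
which isolates a pure $z$-contribution (first bracket) from a pure $y$-contribution (second bracket and last term).

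For the $y$-contribution I would argue verbatim as in Proposition \ref{Pro:2}: assumptions (H1) and (A2) together with Lemma \ref{Le:4.4} yield exactly inequality (\ref{eq5.3}), whose right-hand side carries the bare term $(1-\theta)g(\omega,t,y_2,z_2)$. As in Proposition \ref{Pro:2}, after multiplication by $\theta$ this term cancels precisely against the last summand $-\theta(1-\theta)g(\omega,t,y_2,z_2)$ of the identity, so that on $\{y_1>\theta y_2\}$ the $y$-contribution is controlled by $\theta(1-\theta)(k_1(t)+k_2(t))(|\delta_\theta y|+|y_2|)$ and the only occurrence of $z_2$ is removed.

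The core of the argument, and the step I expect to be the main obstacle, is the $z$-contribution $g(\omega,t,y_1,z_1)-\theta g(\omega,t,y_1,z_2)$, which I would package into a dedicated technical lemma in the Appendix playing the role for (A5)--(A6) that Lemma \ref{Le:4.5} plays for (A3)--(A4). Writing $h(\cdot):=g(\omega,t,y_1,\cdot)$ and using $z_1=\theta z_2+(1-\theta)\delta_\theta z$, the idea is to split
\begin{equation*}
h(z_1)-\theta h(z_2)=(1-\theta)h(z_1)+\theta\big(h(z_1)-h(z_2)\big).
\end{equation*}
The first term is estimated by the growth (A5); since $|z_1|\le(1-\theta)|\delta_\theta z|+\theta|z_2|$, the elementary subadditivity ${\rm ln}(e+u+v)\le{\rm ln}(e+u)+{\rm ln}(e+v)$ together with $[{\rm ln}(e+u)]^{\frac{\alpha^{*}}{2}}\le C(1+u^{\alpha})$ splits it into the desired logarithmic term $(1-\theta)\bar v(t)[{\rm ln}(e+|z_2|)]^{\frac{\alpha^{*}}{2}}$ and a power term $(1-\theta)\bar v(t)|\delta_\theta z|^{\alpha}$ (plus constants), the standard substitution $|y_1|\le(1-\theta)|\delta_\theta y|+\theta|y_2|$ routing the linear-in-$y$ part into the $\beta$-terms. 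The second term is where (A6) is indispensable: the $\bar c(t)$-Lipschitz bound alone would produce an inadmissible \emph{linear} factor $|z_2|$, so one must run a case analysis on the positions of $z_1$ and $z_2$ relative to the valley interval $[-a,a]$ and use that $h$ is monotone on $(-\infty,-a]$ and on $[a,+\infty)$ to discard the dangerous direction, retaining only $\bar c(t)(1-\theta)|\delta_\theta z|$ up to boundary corrections of order $a$. The delicate point is precisely that $h$ is \emph{not} globally convex, so the clean convexity estimate $h(z_1)-\theta h(z_2)\le(1-\theta)h(\delta_\theta z)$ is unavailable and every sign configuration of $z_1,z_2$ around $\pm a$ must be exhausted while tracking the Lipschitz corrections on $[-a,a]$.

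Finally I would combine the two contributions and collect coefficients, obtaining inequality (\ref{eq4.1}) with a new process $\bar f_t$ (a fixed multiple of $f_t$ augmented by the deterministic boundary terms of order $\bar v(t)a^{\alpha}$ and $\bar c(t)a$), with $\beta(t)$ a fixed multiple of $\bar u(t)+k_1(t)+k_2(t)$, and with $\gamma(t)$ a fixed multiple of $\bar v(t)+\bar c(t)$. Since $f_{\cdot}$ has sub-exponential moments of any order and $\int_0^T(\bar v(t)a^{\alpha}+\bar c(t)a)\,dt<+\infty$, the process $\bar f_{\cdot}$ again has sub-exponential moments of any order; since $\bar u,k_1,k_2\in L^1([0,T];\mathbb{R}_+)$ we get $\beta(\cdot)\in L^1([0,T];\mathbb{R}_+)$; and since $\bar v(\cdot),\bar c(\cdot)\in L^{\frac{2}{2-\alpha}}([0,T];\mathbb{R}_+)$ with $0<\int_0^T\bar v(t)\,dt<+\infty$ and $0<\int_0^T\bar c(t)\,dt<+\infty$, we obtain both $\gamma(\cdot)\in L^{\frac{2}{2-\alpha}}([0,T];\mathbb{R}_+)$ and the normalization (\ref{eq4.3}). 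Hence $g$ satisfies (i) of assumption (UN), as claimed.
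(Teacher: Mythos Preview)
Your proposal is correct and follows essentially the same architecture as the paper: the same algebraic identity separating a pure $z$-bracket from a pure $y$-bracket, the same use of Lemma~\ref{Le:4.4} (via (H1) and (A2)) to produce inequality~(\ref{eq5.3}) and cancel the dangling $g(\omega,t,y_2,z_2)$, and a dedicated Appendix lemma for the $z$-bracket under (A5)--(A6), which is exactly Lemma~\ref{Le:4.6} in the paper.

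The one point where you diverge from the paper is the internal split in that $z$-lemma. You write
\[
h(z_1)-\theta h(z_2)=(1-\theta)h(z_1)+\theta\big(h(z_1)-h(z_2)\big),
\]
which forces you to bound $\phi(|z_1|)$ and then unwind $|z_1|\le(1-\theta)|\delta_\theta z|+\theta|z_2|$ via subadditivity of the logarithm and the crude estimate $[\ln(e+u)]^{\alpha^*/2}\le C(1+u^\alpha)$, picking up extra constants. The paper instead uses
\[
h(z_1)-\theta h(z_2)=\big(h(z_1)-h(z_2)\big)+(1-\theta)h(z_2),
\]
so the logarithmic term $\phi(|z_2|)=\bar v(t)[\ln(e+|z_2|)]^{\alpha^*/2}$ appears directly with coefficient~$1$ from the growth bound on $(1-\theta)h(z_2)$, and all the work goes into $\frac{h(z_1)-h(z_2)}{1-\theta}$. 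That difference is handled by first replacing $h$ on $[-a,a]$ by a piecewise-linear function to obtain a continuous $g$ that is monotone on each half-line through~$0$ (so the case analysis becomes clean: monotonicity kills the bad sign, and in the remaining cases the linear $z_2$-contribution from Lipschitz has the right sign to be dropped), and then controlling the compactly supported correction $f-g$ by the standard $\theta\le\tfrac12$ versus $\theta>\tfrac12$ dichotomy. Your plan reaches the same destination; the paper's split is just shorter and avoids the logarithmic gymnastics.
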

	\begin{proof}
		Given arbitrarily $(y_{i}, z_{i})\in \mathbb{R}\times \mathbb{R}$, $i=1,2$ and $\theta\in(0,1)$, and let $\delta_{\theta}y$ and $\delta_{\theta}z$ be defined in assumption (UN). Define
		$$
		\phi(\omega,t,y_{1},|z|):=f_{t}(\omega)+\bar{u}(t)|y_{1}|+\bar{v}(t)\left[{\rm{ln}}\left(e+|z|\right)\right]^{\frac{\alpha^{*}}{2}},
		$$
		where  $\phi(\omega,t,y_{1},\cdot):\mathbb{R}_{+}\mapsto\mathbb{R}_{+}$ is a nondecreasing continuous function.
		In view of assumptions (H1), (A5) and  (A6),   and Lemma \ref{Le:4.6}, we get
		$$
		\begin{aligned}
			&g(\omega, t, y_{1},z_{1})-\theta g(\omega,t,y_{1},z_{2})\\&~\leq (1-\theta)\left[4{\bar{c}}(t)|\delta_{\theta}z|+4\bar{c}(t)a+11\phi(\omega,t,y_{1},a)+\phi(\omega,t,y_{1},|z_{2}|)\right]\\
				&~= (1-\theta)\left[12f_{t}(\omega)+12\bar{u}(t)|y_{1}|+4\bar{c}(t)|\delta_{\theta}z|+4\bar{c}(t)a+\bar{v}(t)\left[{\rm{ln}}\left(e+|z_{2}|\right)\right]^{\frac{\alpha^{*}}{2}}+11\bar{v}(t)\left[{\rm{ln}}\left(e+a\right)\right]^{\frac{\alpha^{*}}{2}}\right]\\
			&~\leq (1-\theta)\left[12f_{t}(\omega)+12\bar{u}(t)|y_{1}-\theta y_{2}|+12\bar{u}(t)\theta |y_{2}|+4\bar{c}(t)|\delta_{\theta}z|+4\bar{c}(t)a+\bar{v}(t)\left[{\rm{ln}}\left(e+|z_{2}|\right)\right]^{\frac{\alpha^{*}}{2}}\right.\\
			&~~~~~~~~~~~~~~~\left.+11\bar{v}(t)\left[{\rm{ln}}\left(e+a\right)\right]^{\frac{\alpha^{*}}{2}}\right]\\
			&~\leq (1-\theta)\left[\hat{f}_{t}(\omega)+12\bar{u}(t)|y_{2}|+12\bar{u}(t)|\delta_{\theta}y|+\bar{v}(t)\left[{\rm{ln}}\left(e+|z_{2}|\right)\right]^{\frac{\alpha^{*}}{2}}+4\bar{c}(t)|\delta_{\theta}z|^{\alpha} \right],
		\end{aligned}
		$$
		where the process $\hat{f}_{t}(\omega):=12f_{t}(\omega)+11\bar{v}(t)\left[{\rm{ln}}\left(e+a\right)\right]^{\frac{\alpha^{*}}{2}}+4\bar{c}(t)a+4\bar{c}(t)$ has sub-exponential moments of any order. Furthermore, in view of assumptions (H1) and (A2), it follows from Lemma \ref{Le:4.4} that the inequality (\ref{eq5.3}) holds.	Combining inequality (\ref{eq5.3}) and the last inequality, we have
		$$
		\begin{aligned}
			&{\bf{1}}_{\left\{y_{1}>\theta y_{2}\right\}}[g(\omega, t, y_{1},z_{1})-\theta g(\omega,t,y_{2},z_{2})]\\
			&= {\bf{1}}_{\left\{y_{1}>\theta y_{2}\right\}}[g(\omega, t, y_{1},z_{1})-\theta g(\omega, t, y_{1},z_{2})]+{\bf{1}}_{\left\{y_{1}>\theta y_{2}\right\}}\theta[g(\omega, t, y_{1},z_{2})-\theta g(\omega, t, y_{2},z_{2})]\\
			&~~~~~-{\bf{1}}_{\left\{y_{1}>\theta y_{2}\right\}}\theta(1-\theta)g(\omega, t, y_{2},z_{2})\\
			&\leq {\bf{1}}_{\left\{y_{1}>\theta y_{2}\right\}} [g(\omega, t, y_{1},z_{1})-\theta g(\omega, t, y_{1},z_{2})]+(1-\theta)\theta\left[(k_{1}(t)+ k_{2}(t))|\delta_{\theta}y|+(k_{1}(t)+k_{2}(t))|y_{2}|\right]\\
			&\leq (1-\theta) \Big[\hat{f}_{t}(\omega)+(k_{1}(t)+k_{2}(t)+12\bar{u}(t))|y_{2}|+\bar{v}(t)\left[{\rm{ln}}\left(e+|z_{2}|\right)\right]^{\frac{\alpha^{*}}{2}}\Big.\\
			&~~~~~~~~~~~~~~\Big.+(k_{1}(t)+ k_{2}(t)+12\bar{u}(t))|\delta_{\theta}y|+ 4\bar{c}(t)|\delta_{\theta}z|^{\alpha}\Big].
		\end{aligned}
		$$
		Thus, the desired inequality $(\ref{eq4.1})$ holds with $f_{t}(\omega):=\hat{f}_{t}(\omega)$, $\beta(t):=k_{1}(t)+k_{2}(t)+12\bar{u}(t)$ and $\gamma(t):=\bar{v}(t)+4\bar{c}(t)$. The proof is then complete.
	\end{proof}

	\begin{remark}\label{Re:5.3}
		Let $d\geq 2$. It is not hard to verify that for each $i=1,2,\dots,d$, if the function $g^{i}(\cdot):\mathbb{R}\rightarrow \mathbb{R}$ satisfies assumption (UN) (resp.($\rm{UN}^{\prime}$)) with $\mathbb{R}^{2}$ instead of $\mathbb{R}^{1+d}$, then $g(\omega,t,y,z):=\sum\limits_{i=1}^{d}g^{i}(z^{i})$  satisfies assumption (UN) (resp.($\rm{UN}^{\prime}$)), where $z=(z^{1},z^{2},\dots,z^{d})^{ \mathrm{T}}\in \mathbb{R}^{d}$.
	\end{remark}
	
	The following examples show that Theorem \ref{The:4.3}  is not covered by any existing results.
	
	\begin{example}\label{Exa:2} 	Let $\beta(\cdot)\in L^{1}([0,T];\mathbb{R}_{+})$, $\gamma(\cdot)\in L^{1}([0,T];\mathbb{R}_{+})\cap L^{\frac{2}{2-\alpha}}([0,T];\mathbb{R}_{+})$. For $(\omega,t,y,z)\in \Omega \times [0,T] \times \mathbb{R}\times \mathbb{R}^{d}$ with $z=(z^{1},z^{2},\dots,z^{d})^{ \mathrm{T}}$,  define
		$$
		g(\omega,t,y,z):=|B_{t}(\omega)|+\beta(t)\left(\sqrt[3]{y}{\bf 1}_{y\leq 0}+{\rm{sin}}y{\bf 1}_{y> 0}\right)+\gamma(t)\sum\limits_{i=1}^{d}q(z^{i})+\gamma(t)|z|^{\alpha},$$
		where
		$$
		\begin{aligned}
			\begin{split}
				q(x):=\left\{
				\begin{array}{ll}
					-x, & x\leq -2;\\
					-\frac{3}{2}x-1, & -2<x<2;\\
					-2x,  & x\geq 2.\\
				\end{array}
				\right.
			\end{split}
		\end{aligned}
		$$
		In view of Proposition $\ref{Pro:2}$, (iii) of Remark \ref{Re:4.1} and Remark \ref{Re:5.3}, it is not difficult to verify that the  generator $g$ satisfies assumptions  (EX1), (EX2) and ($\rm{UN}^{\prime}$),  but it satisfies neither the time-varying Lipschitz continuity condition nor the convexity/concavity condition with respect to $(y,z)$. In particular, we note that $g$ is  not   time-varying locally Lipschitz continuous in  $y$.
	\end{example}

	\begin{example}\label{Exa:4}
		Let $\beta(\cdot)\in L^{1}([0,T];\mathbb{R}_{+})$, $\gamma(\cdot)\in L^{\frac{2}{2-\alpha}}([0,T];\mathbb{R}_{+})$ satisfying inequality (\ref{eq4.3}). For $(\omega,t,y,z)\in \Omega \times [0,T] \times \mathbb{R}\times \mathbb{R}^{d}$ with $z=(z^{1},z^{2},\dots,z^{d})^{ \mathrm{T}}$,  define
		$$
		g(\omega,t,y,z):=|B_{t}(\omega)|+\beta(t)\sqrt{|y|}{\bf 1}_{y\leq 0}+\gamma(t)\sum\limits_{i=1}^{d}\left[{\rm{ln}}(e+|z^{i}|)\right]^{\frac{\alpha^{*}}{2}}+2\gamma(t)|z|^{\alpha}.
		$$
		According to Proposition $\ref{Pro:3}$, (iii) of Remark \ref{Re:4.1} and Remark \ref{Re:5.3},  it is easy to verify that the generator $g$ satisfies assumptions  (EX1), (EX2) and (UN), but it satisfies neither the time-varying Lipschitz  continuity condition nor the convexity/concavity condition with respect to $(y,z)$. In particular, this $g$ is also not time-varying locally Lipschitz continuous in $y$.
	\end{example}

	\section*{Appendix}
     \renewcommand{\thelemma}{A.\arabic{lemma}}

	\begin{lemma}\label{Le:4.4}
		
		Let  $f(x): \mathbb{R}\mapsto \mathbb{R}$ be a  continuous function.
	Assume that there exist two constants $k_{1},~k_{2}>0$ such that
	
	(i) $f(x)$ satisfies a monotonicity condition on $\mathbb{R}_{-}$, i.e.,
	$$
	\forall~ (x_{1},x_{2})\in \mathbb{R}_{-} \times \mathbb{R}_{-},~~{\rm{sgn}}(x_{1}-x_{2})(f(x_{1})-f(x_{2}))\leq k_{1}|x_{1}-x_{2}|.
	$$
	
	(ii) $f(x)$ is Lipschitz continuous on $\mathbb{R}_{+}$, i.e.,
	$$
	\forall~ (x_{1},x_{2})\in \mathbb{R}_{+} \times \mathbb{R}_{+},~~|f(x_{1})-f(x_{2})|\leq k_{2}|x_{1}-x_{2}|.
	$$
	Then, for each $(x_{1},x_{2})\in \mathbb{R}\times \mathbb{R}$ and $\theta\in(0,1)$, we have
	\begin{equation}\label{eq:A.1}
		\tag{A.1}
		{\bf{1}}_{\left\{x_{1}>\theta x_{2}\right\}}\frac{f(x_{1})-\theta f(x_{2})}{1-\theta}\leq (k_{1}+ k_{2})\left|\frac{x_{1}-\theta x_{2}}{1-\theta}\right|+(k_{1}+k_{2})|x_{2}|+f(x_{2}).
	\end{equation}
	\end{lemma}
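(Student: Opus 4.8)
The plan is to strip the statement of its $\theta$-dependence and then split according to the signs of $x_1$ and $x_2$, using hypothesis (ii) on $\mathbb{R}_+$, hypothesis (i) on $\mathbb{R}_-$, and a bridging value at $0$ in the mixed case. On $\{x_1\le\theta x_2\}$ the indicator annihilates the left-hand side, so only the event $\{x_1>\theta x_2\}$ needs genuine work. There I would set $w:=\frac{x_1-\theta x_2}{1-\theta}$, so that $w>0$, $|w|=w$, and $x_1=\theta x_2+(1-\theta)w$. Multiplying the target inequality by $1-\theta>0$ and writing $f(x_1)-\theta f(x_2)=(f(x_1)-f(x_2))+(1-\theta)f(x_2)$, the term $(1-\theta)f(x_2)$ cancels against the $f(x_2)$ on the right, and (A.1) is seen to be equivalent to
\[
f(x_1)-f(x_2)\le (k_1+k_2)(1-\theta)(w+|x_2|).
\]
I would also record $x_1-x_2=(1-\theta)(w-x_2)$, which converts the two hypotheses into increments of exactly the shape appearing on the right.

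Next comes the case analysis. If $x_2\ge 0$, then $x_1=\theta x_2+(1-\theta)w>0$, so both points lie in $\mathbb{R}_+$ and (ii) gives $f(x_1)-f(x_2)\le k_2|x_1-x_2|=k_2(1-\theta)|w-x_2|\le k_2(1-\theta)(w+|x_2|)$, which is dominated by the right-hand side above. If $x_2<0$ and $x_1\le 0$, then both points lie in $\mathbb{R}_-$; since $x_2<0$ forces $\theta x_2>x_2$, we have $x_1>\theta x_2>x_2$, so $\mathrm{sgn}(x_1-x_2)=+1$ and the monotonicity hypothesis (i) yields $f(x_1)-f(x_2)\le k_1|x_1-x_2|=k_1(1-\theta)(w+|x_2|)$, again enough (here $|w-x_2|=w+|x_2|$ because $w>0>x_2$).

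The only delicate case is the mixed one $x_2<0<x_1$, which I expect to be the main obstacle, since neither hypothesis alone reaches across $0$. Here I would insert the value $f(0)$ and bound the two increments separately: $f(x_1)-f(0)\le k_2\,x_1$ by (ii) (both arguments in $\mathbb{R}_+$), and $f(0)-f(x_2)\le k_1|x_2|$ by (i) (both arguments in $\mathbb{R}_-$, with $0>x_2$). Adding them gives $f(x_1)-f(x_2)\le k_2\,x_1+k_1|x_2|\le(k_1+k_2)(x_1+|x_2|)$. It then remains to match $(k_1+k_2)(x_1+|x_2|)$ with the reduced right-hand side: from $x_1=(1-\theta)w-\theta|x_2|$ one gets $(1-\theta)w=x_1+\theta|x_2|$, whence $(1-\theta)(w+|x_2|)=x_1+\theta|x_2|+(1-\theta)|x_2|=x_1+|x_2|$, so the estimate closes exactly. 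Since $\mathbb{R}_+$ and $\mathbb{R}_-$ both contain $0$, these three cases exhaust $\{x_1>\theta x_2\}$, and the reduced inequality, hence (A.1), follows. Note that continuity of $f$ is not actually invoked; only the one-sided estimates (i) and (ii), together with the single bridging identity, carry the argument.
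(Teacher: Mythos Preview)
Your proof is correct and follows essentially the same route as the paper: the same three-case split according to the signs of $x_1$ and $x_2$ on the event $\{x_1>\theta x_2\}$, applying (ii) when both points are nonnegative, (i) when both are nonpositive, and bridging through $f(0)$ in the mixed case. Your preliminary reduction to the equivalent inequality $f(x_1)-f(x_2)\le (k_1+k_2)(1-\theta)(w+|x_2|)$ is a clean bookkeeping device, but the underlying argument matches the paper's line by line.
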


	\begin{proof}
		Given arbitrarily $(x_{1},x_{2})\in \mathbb{R}\times \mathbb{R}$ and $\theta\in(0,1)$. Note that the inequality (\ref{eq:A.1}) is clearly true when $x_{1}\leq \theta x_{2}$. We only need to consider the following three cases: $x_{1}>\theta x_{2}\geq 0$, $0\geq x_{1}>\theta x_{2}>x_{2}$ and $x_{1}>0>\theta x_{2}$.
		
		{\bf (a) The case of $x_{1}>\theta x_{2}\geq 0$:} In view of (ii), we have
		$$
		\begin{aligned}
			f(x_{1})-\theta f(x_{2})&\leq|f(x_{1})-f( x_{2})|+(1-\theta)f(x_{2})\leq k_{2}|x_{1}- x_{2}|+(1-\theta)f(x_{2})\\
			&\leq (1-\theta)\left[k_{2}\left|\frac{x_{1}-\theta x_{2}}{1-\theta}\right|+k_{2}|x_{2}|+f(x_{2})\right].
		\end{aligned}
		$$
		
		{\bf (b) The case of $0\geq x_{1}>\theta x_{2}>x_{2}$:} Using (i), we get
		$$
		\begin{aligned}
			f(x_{1})-\theta f(x_{2})&=f(x_{1})- f(x_{2})+(1-\theta)f(x_{2})\leq k_{1}|x_{1}-x_{2}| +(1-\theta)f(x_{2})\\
			&\leq (1-\theta)\left[k_{1}\left|\frac{x_{1}-\theta x_{2}}{1-\theta}\right|+k_{1}|x_{2}|+f(x_{2})\right].
		\end{aligned}
		$$
		
		{\bf (c) The case of $x_{1}>0>\theta x_{2}$:} It follows from $(b)$ that
		$$
		f(0)-\theta f(x_{2})\leq (1-\theta)\left[k_{1}\left|\frac{-\theta x_{2}}{1-\theta}\right|+k_{1}|x_{2}|+f(x_{2})\right].
		$$
		Then, combining (ii) and the last inequality, we have
		$$
		\begin{aligned}
			f(x_{1})-\theta f( x_{2})&= f(x_{1})-f(0)+f(0)-\theta f(x_{2})\\
             &\leq k_{2}|x_{1}| +(1-\theta)\left[k_{1}\left|\frac{-\theta x_{2}}{1-\theta}\right|+k_{1}|x_{2}|+f(x_{2})\right]\\
			&=(1-\theta)\left[(k_{1}+ k_{2})\left|\frac{x_{1}-\theta x_{2}}{1-\theta}\right|+k_{1}|x_{2}|+f(x_{2})\right].
		\end{aligned}
		$$
		All in all, the inequality (\ref{eq:A.1}) holds.
	\end{proof}

	\begin{lemma}\label{Le:4.5}
		Let  $f(x): \mathbb{R}\mapsto \mathbb{R}$  be a continuous function such that $|f(x)|\leq \phi(|x|)$ for each $x\in \mathbb{R}$, where $\phi(x):\mathbb{R}_{+}\mapsto\mathbb{R}_{+}$ is a nondecreasing continuous function. Assume that there exist two constants $a\geq 0$ and $k>0$ such that
		
		(i) $f(x)$ is  Lipschitz continuous on $[-a,a]$, i.e.,
		$$
		\forall~ (x_{1},x_{2})\in [-a,a] \times [-a,a],~~|f(x_{1})-f(x_{2})|\leq k|x_{1}-x_{2}|;
		$$
		
		(ii) $f(x)$ is  convex on $\left(-\infty,-a\right]$ and $\left[a,+\infty\right)$, respectively;

        (iii) Both $f_{-}^{\prime}(-a)$ and $f_{+}^{\prime}(a)$ are finite value.\\
		Then, for each $(x_{1},x_{2})\in \mathbb{R}\times \mathbb{R}$ and $\theta\in(0,1)$, we have
		\begin{equation}\label{eq:A.2}
			\tag{A.2}
			\frac{f(x_{1})-\theta f(x_{2})}{1-\theta}\leq \phi\left(\left|\frac{x_{1}-\theta x_{2}}{1-\theta}\right|+2a\right)+2k_{0}\left|\frac{x_{1}-\theta x_{2}}{1-\theta}\right|+11k_{0}a+22\phi(a),
		\end{equation}
	\end{lemma}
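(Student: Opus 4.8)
The plan is to reduce everything to the two genuinely convex pieces of $f$ and to use the Lipschitz control on $[-a,a]$ only to absorb the defect produced by the non-convex middle. Throughout write $k_0 := k \vee |f_-^{\prime}(-a)| \vee |f_+^{\prime}(a)|$ (finite by (iii)) and $\delta := \frac{x_1 - \theta x_2}{1-\theta}$, so that $x_1 = \theta x_2 + (1-\theta)\delta$ is exactly the $\theta$-weighted point lying between $x_2$ and $\delta$. The quantity to be estimated, $\frac{f(x_1) - \theta f(x_2)}{1-\theta}$, is the classical $\theta$-convexity difference: if $f$ were convex on a single interval containing $x_1,x_2,\delta$, then $f(x_1)\le \theta f(x_2)+(1-\theta)f(\delta)$ would give the bound $f(\delta)\le \phi(|\delta|)$ at once. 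So the whole difficulty is concentrated in how the segment joining $x_2$ and $\delta$ meets the non-convex window $[-a,a]$.

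First I would record the two elementary estimates that do all the work. (a) By (ii)--(iii), convexity on each tail yields the tangent-line lower bounds $f(x)\ge f(a)+f_+^{\prime}(a)(x-a)$ for $x\ge a$ and $f(x)\ge f(-a)+f_-^{\prime}(-a)(x+a)$ for $x\le -a$, together with the chord upper bound $f(x_1)\le (1-\lambda)f(a)+\lambda f(\delta)$ with $\lambda=\frac{x_1-a}{\delta-a}$ whenever $a\le x_1\le \delta$ (and its mirror on the left). (b) On $[-a,a]$, (i) gives $|f(x)-f(x')|\le k_0|x-x'|$ and hence $|f(x)|\le \phi(a)+2k_0 a$. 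The tangent lower bounds in (a) are the crucial device: they cancel the linear-in-$x_2$ growth of the term $-\theta f(x_2)$ when $x_2$ sits far out in a tail, which is exactly where a naive Lipschitz bound fails.

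Next I would run a case analysis on the positions of $x_2$ and $\delta$ relative to $[-a,a]$ (three regions each, reduced using the reflection symmetry $x\mapsto -x$, under which all the hypotheses and the claimed inequality are invariant). When $x_2,\delta$ lie in a common tail, $x_1$ does too and plain convexity closes the case with $f(\delta)\le \phi(|\delta|)$. When both lie in $[-a,a]$, so does $x_1$, and the estimate (b) gives $\frac{f(x_1)-\theta f(x_2)}{1-\theta}\le k_0(|\delta|+a)+\phi(a)$, since $|x_1-x_2|\le (1-\theta)(|\delta|+a)$. The remaining mixed cases (one argument in $[-a,a]$ and one in a tail, or $x_2,\delta$ in opposite tails) I handle by inserting the boundary values at $\pm a$: I upper-bound $f(x_1)$ by a chord or by $\phi$ evaluated at a point shifted by at most $2a$ (whence the argument $\phi(|\delta|+2a)$, using monotonicity of $\phi$), and I lower-bound $f(x_2)$ by its tangent line so that the linear terms telescope. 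Each such replacement costs at most a bounded multiple of $k_0 a$ and $\phi(a)$, and summing over the several sub-cases produces the stated constants $11k_0 a$ and $22\phi(a)$.

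The main obstacle is the mixed case in the regime $\theta\to 1$, where the apparently harmless Lipschitz error $\frac{k_0|x_1-x_2|}{1-\theta}$ threatens to blow up. The point to exploit -- and the step needing the most care -- is that this error cannot be large in isolation: keeping $x_1$ in a tail (or at $\pm a$) while $\theta\to 1$ forces $|\delta|$ to grow at the same rate, so the error is in fact dominated by the term $2k_0|\delta|$ on the right-hand side. Concretely, I would retain the genuine distance of $x_2$ from the boundary $\pm a$ (rather than crudely replacing it by $2a$) and use the identity $x_1=\theta x_2+(1-\theta)\delta$ to tie the size of $|x_1-x_2|$ and of $|a-\theta x_2|$ to $|\delta|$, so that after dividing by $1-\theta$ everything is controlled by $2k_0|\delta|$ plus the localized $O(k_0 a+\phi(a))$ terms. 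Checking this domination uniformly in $\theta$ across the sub-cases is the real content; the rest is bookkeeping.
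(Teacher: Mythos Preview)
Your approach is sound and reaches the same bound, but the paper proceeds quite differently. Rather than a direct case analysis on the positions of $x_2$ and $\delta$ relative to $[-a,a]$, it builds a chain of auxiliary functions. First it replaces $f$ on $(-a,a)$ by two linear pieces of slope $\pm k_0$ meeting at a point $x_0\in(-a,a)$, obtaining a continuous $g$ that agrees with $f$ on the tails and is convex on each of $(-\infty,x_0]$ and $[x_0,+\infty)$. After the shift $\bar g(x):=g(x+x_0)-g(x_0)$ the two convex pieces meet at the origin; each is then extended past $0$ with slope $\mp k_0$ to a globally convex function $\bar g_1$ (resp.\ $\bar g_2$), and plain convexity of these yields the $\theta$-difference bound for $\bar g$ after only a four-way sign split on $(x_1,x_2)$. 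This unwinds to a bound for $g$, and the remainder $h:=f-g$ is a compactly supported Lipschitz function handled by the dichotomy $\theta\le 1/2$ versus $\theta>1/2$. The payoff of the paper's route is that no $1/(1-\theta)$ factor ever appears---each use of convexity bounds the $\theta$-difference by a single function value, so uniformity in $\theta$ is automatic. Your direct route is more elementary and avoids the constructions, but you must verify in every mixed sub-case that the boundary corrections really telescope; they do, provided (exactly as you flag) one keeps the genuine distances to $\pm a$ and combines the Lipschitz error $k_0(a-x_1)$ with the tangent correction $\theta k_0(x_2-a)$ via $x_1=\theta x_2+(1-\theta)\delta$, which collapses both to $(1-\theta)k_0(a-\delta)$.
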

	\noindent where $k_{0}:= |f_{-}^{\prime}(-a)|\vee |f_{+}^{\prime}(a)|\vee k.$

	\begin{proof} Without loss of generality, we can assume that $a>0$. The proof is divided into the following four steps.
		
		{\bf  Step 1.}  We construct a  continuous function $g(x)$ such that
		
		$\bullet~ g(x)=f(x)$, $\forall~ x\in \left(-\infty,-a\right]\cup \left[a,+\infty\right)$;
		
		$\bullet~ g(x)$ is respectively convex on $\left(-\infty,x_{0}\right]$ and $\left[x_{0},+\infty\right)$ for some $x_{0}\in(-a,a)$.\\
		In fact, we can define $g(x)$ as follows:
		$$
		g(x)=\begin{cases}
			f(x),& x \in (-\infty,-a]; \\
			k_{0}(x+a)+f(-a),& x\in (-a,x_{0}];\\
			-k_{0}(x-a)+f(a), &x\in [x_{0},a);\\
			f(x), & x\in [a,+\infty),
		\end{cases}
		$$ \noindent where, in view of (i),
		$$
		x_{0}:=\frac{f(-a)-f(a)}{2k_{0}}\in(-a,a).
		$$
		 By the definition of $g(x)$, we get
		\begin{equation}\label{eq:A.3}
			\tag{A.3}
			\forall~ x\in \mathbb{R},~~~|g(x)|\leq|f(x)|+k_{0}a+2\sup\limits_{[-a,a]}|f(x)|\leq \phi(|x|)+k_{0}a+2\phi(a).
		\end{equation}

		\begin{figure}[H]
			\centering
			\includegraphics[width=12.5cm,height=5.4cm]{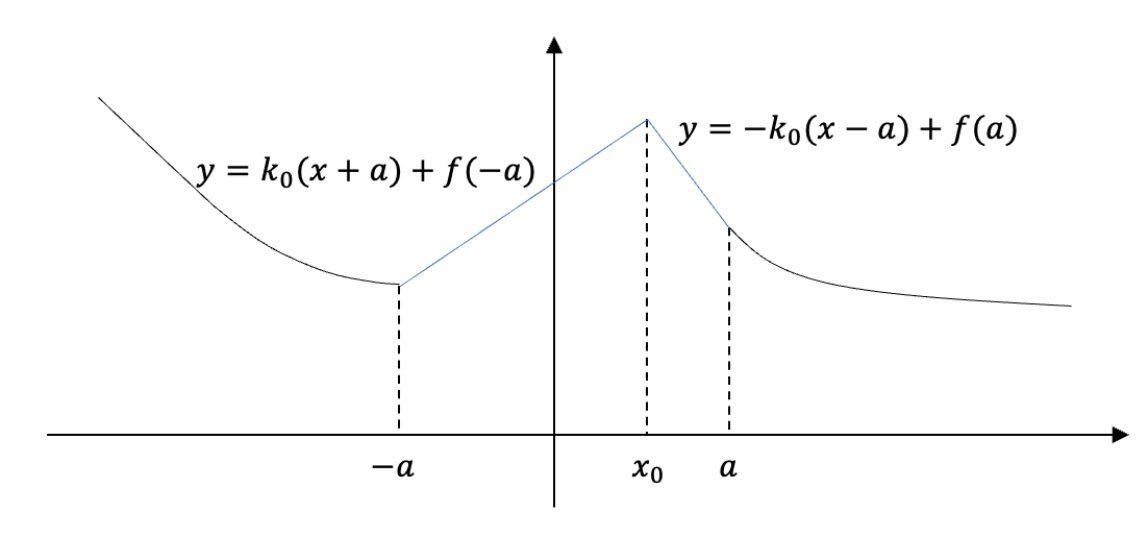}
			\caption{a graph of $g(x)$}
			\label{1}
		\end{figure}

		{\bf Step 2.} Define $\bar{g}(x):= g(x+x_{0})-g(x_{0})$. Given arbitrarily $(x_{1},x_{2})\in \mathbb{R}\times \mathbb{R}$ and $\theta\in(0,1)$, we prove that $\bar{g}(x)$ satisfies the following inequality
		\begin{equation}\label{eq:A.4}
			\tag{A.4}
			\frac{\bar{g}(x_{1})-\theta \bar{g}( x_{2})}{1-\theta}\leq \phi\left(\left|\frac{x_{1}-\theta x_{2}}{1-\theta}\right|+a\right)+k_{0}\left|\frac{x_{1}-\theta x_{2}}{1-\theta}\right|+2k_{0}a+5\phi(a).\\
		\end{equation}
		Firstly, $\bar{g}(0)=0$ and  we have
		
		$\bullet ~\bar{g}(x)$ is Lipschitz continuous on $[-a-x_{0},a-x_{0}]$, i.e.,
		$$
		\forall~ (x_{1},x_{2})\in [-a-x_{0},a-x_{0}] \times [-a-x_{0},a-x_{0}],~~|\bar{g}(x_{1})-\bar{g}|(x_{2})|\leq k_{0}|x_{1}-x_{2}|;
		$$
		
		$\bullet~ \bar{g}(x)$  is convex on $\left(-\infty,0\right]$ and $\left[0,+\infty\right)$, respectively.\\
		On the other hand, from the definition of $\bar{g}$ and inequality (\ref{eq:A.3}), we know that
		\begin{equation}\label{eq:A.5}
			\tag{A.5}
			\forall~ x\in \mathbb{R},~~~|\bar{g}(x)|\leq |g(x+x_{0})|+|g(x_{0})|\leq \phi (|x|+a)+2k_{0}a+5\phi(a).
		\end{equation}

		\begin{figure}[H]
			\centering
			\includegraphics[width=12cm,height=6cm]{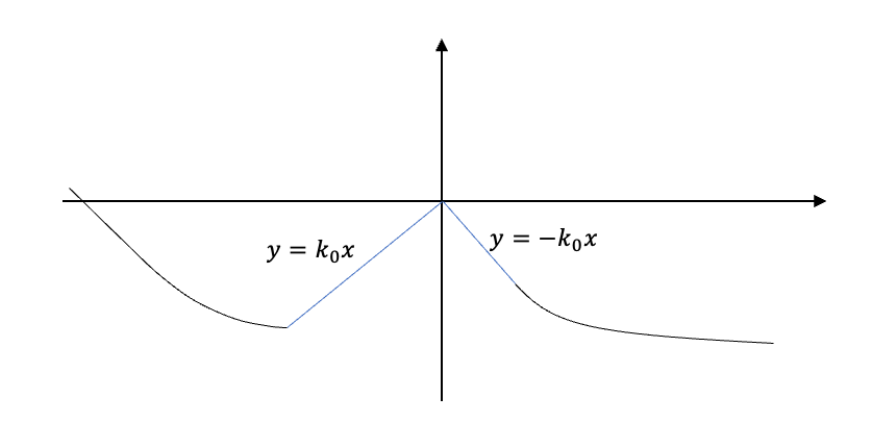}
			\caption{a graph of  $\bar{g}(x)$}
			\label{2}
		\end{figure}
		Next, we need to consider the following four cases:  $x_{1},x_{2}\geq 0$, $x_{1},x_{2}\leq 0$, $x_{2}\leq 0\leq x_{1}$ and $x_{1}\leq 0\leq x_{2}$.
		
		{\bf (a) The case of $x_{1},x_{2}\geq 0$:} Define
		$$
		\bar{g}_{1}(x):=\begin{cases}
			\bar{g}(x),& x \in [0,+\infty); \\
			-k_{0}x,& x\in (-\infty,0).\\
		\end{cases}
		$$
		\begin{figure}[H]
			\centering
			\includegraphics[width=11cm,height=6cm]{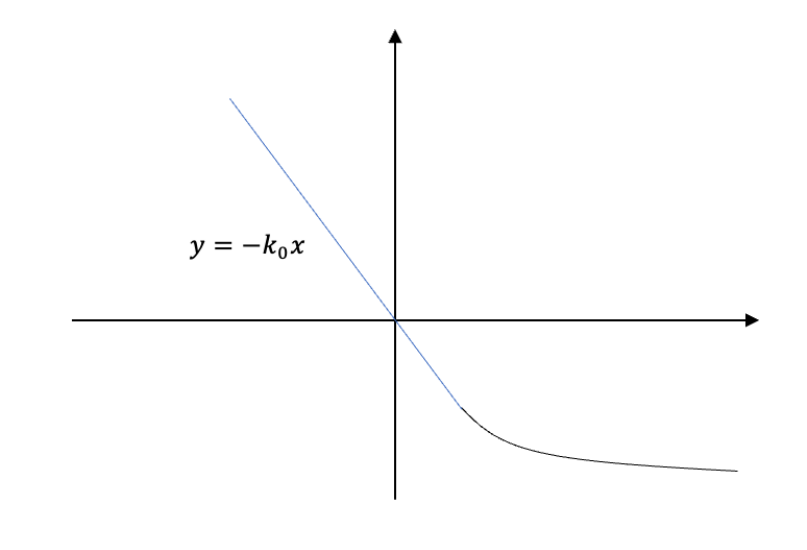}
			\caption{a graph of  $\bar{g}_{1}(x)$}
			\label{3}
		\end{figure}
		\noindent It is obvious that $\bar{g}_{1}(x)$ is convex on $\mathbb{R}$ and satisfies
		$$
		\forall~ x\in \mathbb{R},~~~~ \bar{g}_{1}(x)\leq |\bar{g}(x)|+k_{0}|x|.
		$$
		Then, we have
		$$
		\begin{aligned}
			\frac{\bar{g}(x_{1})-\theta \bar{g}( x_{2})}{1-\theta}&=\frac{\bar{g}_{1}(x_{1})-\theta \bar{g}_{1}( x_{2})}{1-\theta}=\frac{\bar{g}_{1}(\theta x_{2}+(1-\theta)\frac{x_{1}-\theta x_{2}}{1-\theta})- \theta \bar{g}_{1}(x_{2})}{1-\theta}\\
			&\leq \bar{g}_{1}\left(\frac{x_{1}-\theta x_{2}}{1-\theta}\right)\leq \left|\bar{g}\left(\frac{x_{1}-\theta x_{2}}{1-\theta}\right)\right|+k_{0}\left|\frac{x_{1}-\theta x_{2}}{1-\theta}\right|.
		\end{aligned}
		$$
		
		{\bf (b) The case of $x_{1},x_{2}\leq 0$:} Define
		$$
		\bar{g}_{2}(x):=\begin{cases}
			\bar{g}(x),& x \in (-\infty,0]; \\
			k_{0}x,& x\in (0,+\infty).\\
		\end{cases}
		$$
		\begin{figure}[H]
			\centering
			\includegraphics[width=11cm,height=6cm]{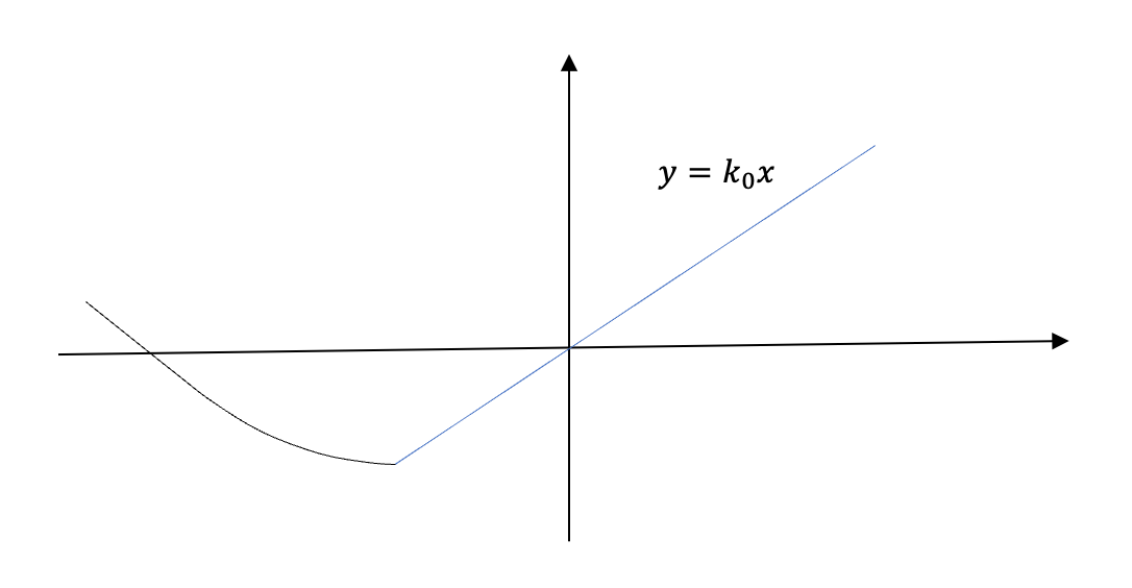}
			\caption{a graph of  $\bar{g}_{2}(x)$}
			\label{4}
		\end{figure}
		\noindent It is clear that $\bar{g}_{2}(x)$ is convex on $\mathbb{R}$ and satisfies
		$$
		\forall~ x\in \mathbb{R},~~~~ \bar{g}_{2}(x)\leq |\bar{g}(x)|+k_{0}|x|.
		$$
		Then, we have
		$$
		\begin{aligned}
			\frac{\bar{g}(x_{1})-\theta \bar{g}( x_{2})}{1-\theta}&=\frac{\bar{g}_{2}(x_{1})-\theta \bar{g}_{2}( x_{2})}{1-\theta}=\frac{\bar{g}_{2}(\theta x_{2}+(1-\theta)\frac{x_{1}-\theta x_{2}}{1-\theta})- \theta \bar{g}_{2}(x_{2})}{1-\theta}\\
			&\leq \bar{g}_{2}\left(\frac{x_{1}-\theta x_{2}}{1-\theta}\right)\leq \left|\bar{g}\left(\frac{x_{1}-\theta x_{2}}{1-\theta}\right)\right|+k_{0}\left|\frac{x_{1}-\theta x_{2}}{1-\theta}\right|.
		\end{aligned}
		$$
		
		{\bf (c) The case of $x_{2}\leq 0\leq x_{1}$:} It follows from $(a)$ that
		\begin{equation}\label{eq:A.6}
			\tag{A.6}
			\frac{\bar{g}(x_{1})-\theta \bar{g}(0)}{1-\theta}\leq \bar{g}_{1}\left(\frac{x_{1}}{1-\theta}\right)=\bar{g}\left(\frac{x_{1}}{1-\theta}\right).
		\end{equation}
		In view of $(b)$, we get
		\begin{equation}\label{eq:A.7}
			\tag{A.7}
			\frac{\bar{g}(0)-\theta \bar{g}(x_{2})}{1-\theta}\leq \bar{g}_{2}\left(\frac{-\theta x_{2}}{1-\theta}\right)=k_{0}\left|\frac{-\theta x_{2}}{1-\theta}\right|.
		\end{equation}
		Then, combining inequalities (\ref{eq:A.6}) and (\ref{eq:A.7}) together with $\bar{g}(0)=0$, we have	
		$$
		\begin{aligned}
			\frac{\bar{g}(x_{1})-\theta \bar{g}( x_{2})}{1-\theta}&=\frac{\bar{g}(x_{1})-\theta \bar{g}(0)+\bar{g}(0)-\theta\bar{g}(x_{2})}{1-\theta}\leq\bar{g}\left(\frac{x_{1}}{1-\theta}\right)+k_{0}\left|\frac{x_{1}-\theta x_{2}}{1-\theta}\right|.\\
		\end{aligned}
		$$
		
		{\bf (d) The case of $x_{1}\leq 0\leq x_{2}$:} It follows from $(b)$ that
		\begin{equation}\label{eq:A.8}
			\tag{A.8}
			\frac{\bar{g}(x_{1})-\theta \bar{g}(0)}{1-\theta}\leq \bar{g}_{2}\left(\frac{x_{1}}{1-\theta}\right)=\bar{g}\left(\frac{x_{1}}{1-\theta}\right).
		\end{equation}
		By virtue of $(a)$, we have
		\begin{equation}\label{eq:A.9}
			\tag{A.9}
			\frac{\bar{g}(0)-\theta \bar{g}(x_{2})}{1-\theta}\leq \bar{g}_{1}\left(\frac{-\theta x_{2}}{1-\theta}\right)=k_{0}\left|\frac{-\theta x_{2}}{1-\theta}\right|.
		\end{equation}
		Then, combining inequalities (\ref{eq:A.8}) and (\ref{eq:A.9}) together with $\bar{g}(0)=0$, we get	
		$$
		\begin{aligned}
			\frac{\bar{g}(x_{1})-\theta \bar{g}( x_{2})}{1-\theta}&=\frac{\bar{g}(x_{1})-\theta \bar{g}(0)+\bar{g}(0)-\theta\bar{g}(x_{2})}{1-\theta}\leq\bar{g}\left(\frac{x_{1}}{1-\theta}\right)+k_{0}\left|\frac{x_{1}-\theta x_{2}}{1-\theta}\right|.\\
		\end{aligned}
		$$
		All in all, in view of inequality (\ref{eq:A.5}),  the inequality (\ref{eq:A.4}) holds.
		
		{\bf Step 3.} Given arbitrarily $(x_{1},x_{2})\in \mathbb{R}\times \mathbb{R}$ and $\theta\in(0,1)$, we prove  that $g(x)$ satisfies the following inequality
		\begin{equation}\label{eq:A.10}
			\tag{A.10}
			\frac{g(x_{1})-\theta g( x_{2})}{1-\theta}\leq \phi\left(\left|\frac{x_{1}-\theta x_{2}}{1-\theta}\right|+2a\right)+k_{0}\left|\frac{x_{1}-\theta x_{2}}{1-\theta}\right|+4k_{0}a+7\phi(a).
		\end{equation}
		In fact, it follows from the definition of $g(x)$ that
		$$
		|g(x_{0})|\leq k_{0}a+ 2\sup\limits_{[-a,a]}|f(x)|\leq k_{0}a+2\phi(a).
		$$
		Combining inequality (\ref{eq:A.4}) and the last inequality,  we have
		$$
		\begin{aligned}
			\frac{g(x_{1})-\theta g( x_{2})}{1-\theta}&=\frac{(g(x_{1}-x_{0}+x_{0})-g(x_{0}))-\theta (g(x_{2}-x_{0}+ x_{0})-g(x_{0}))}{1-\theta}
			+g(x_{0})\\
			&\leq \frac{\bar{g}(x_{1}-x_{0})-\theta \bar{g}( x_{2}-x_{0})}{1-\theta}+k_{0}a+2\phi(a)\\
			&\leq \phi\left(\left|\frac{(x_{1}-x_{0})-\theta (x_{2}-x_{0})}{1-\theta}\right|+a\right) +k_{0}\left|\frac{(x_{1}-x_{0})-\theta (x_{2}-x_{0})}{1-\theta}\right|+3k_{0}a+7\phi(a)\\
			&\leq \phi\left(\left|\frac{x_{1}-\theta x_{2}}{1-\theta}-x_{0}\right|+a\right)+k_{0}\left|\frac{x_{1}-\theta x_{2}}{1-\theta}\right| +k_{0}|x_{0}|+3k_{0}a+7\phi(a)\\
			&\leq \phi\left(\left|\frac{x_{1}-\theta x_{2}}{1-\theta}\right|+2a\right)+k_{0}\left|\frac{x_{1}-\theta x_{2}}{1-\theta}\right|+4k_{0}a+7\phi(a).\\
		\end{aligned}
		$$
		Hence, the inequality (\ref{eq:A.10}) holds.
		
		{\bf Step 4.} We prove that $f(x)$  satisfies (\ref{eq:A.2}). Set $h(x):=f(x)-g(x)$. Then $h(x)$ satisfies (i)  with   Lipschitz constant $ k_{0}$, and  $h(x)\equiv 0$ when $|x|\geq a$. Note that $h(x)$ has a bound $M:=k_{0}a+3\sup\limits_{[-a,a]}|f(x)|\leq k_{0}a+3\phi(a)$,  for each $(x_{1},x_{2})\in\mathbb{R} \times \mathbb{R}$ and $\theta\in(0,1)$, we have
		$$
		\begin{aligned}
			|h(\theta x_{2})-h(x_{2})|&=|h(\theta x_{2})-h(x_{2})|{\bf{1}}_{\left\{\theta\in\left(0,1/2\right]\right\}}+|h(\theta x_{2})-h(x_{2})|{\bf{1}}_{\left\{\theta\in\left(1/2,1\right)\right\}}\\
			&\leq (1-\theta)\frac{2M}{1-\theta}{\bf{1}}_{\left\{\theta\in\left(0,1/2\right]\right\}}+(1-\theta)k_{0}|x_{2}|{\bf{1}}_{\left\{|x_{2}|\leq 2a\right\}}{\bf{1}}_{\left\{\theta\in\left(1/2,1\right)\right\}}\\
			&\leq (1-\theta)(4M+2k_{0}a).
		\end{aligned}
		$$
		Then,
		\begin{equation}\label{eq:A.11}
			\tag{A.11}
			\begin{aligned}
				\frac{h(x_{1})-\theta h(x_{2})}{1-\theta}&\leq\frac{|h(x_{1})- h(\theta x_{2})|}{1-\theta}+\frac{|h(\theta x_{2})- h( x_{2})|}{1-\theta}+|h(x_{2})|\\
				&\leq k_{0} \left|\frac{x_{1}-\theta x_{2}}{1-\theta}\right|+5M+2k_{0}a
				\leq k_{0} \left|\frac{x_{1}-\theta x_{2}}{1-\theta}\right|+7k_{0}a+15\phi(a).
			\end{aligned}
		\end{equation}
		Finally, in view of $f(x)=g(x)+h(x)$, inequalities (\ref{eq:A.10}) and (\ref{eq:A.11}),  we deduce that
		$$
		\begin{aligned}
			\frac{f(x_{1})-\theta f(x_{2})}{1-\theta}&=\frac{g(x_{1})-\theta g(x_{2})}{1-\theta}+\frac{h(x_{1})-\theta h(x_{2})}{1-\theta}\\
			&\leq \phi\left(\left|\frac{x_{1}-\theta x_{2}}{1-\theta}\right|+2a\right)+2k_{0}\left|\frac{x_{1}-\theta x_{2}}{1-\theta}\right|+11k_{0}a+22\phi(a).
		\end{aligned}
		$$
		Therefore, the inequality (\ref{eq:A.2}) holds.
	\end{proof}

	\begin{lemma}\label{Le:4.6}
		Let $f(x): \mathbb{R}\mapsto \mathbb{R}$ be a continuous function such that $|f(x)|\leq \phi(|x|)$ for each $x\in \mathbb{R}$, where $\phi(x):\mathbb{R}_{+}\mapsto \mathbb{R}_{+}$ is a nondecreasing continuous function.
		Assume that there exist two constants $a\geq 0$ and $k>0$  such that
		
		(i) $f(x)$ is  Lipschitz continuous on $\mathbb{R}$, i.e.,
		$$
		\forall~ (x_{1},x_{2})\in \mathbb{R} \times \mathbb{R},~~|f(x_{1})-f(x_{2})|\leq k|x_{1}-x_{2}|;
		$$
		
		(ii) $f(x)$ decreases  on $\left(-\infty,-a\right]$ and increases  on $\left[a,+\infty\right)$.\\
		Then, for each $(x_{1},x_{2})\in \mathbb{R}\times \mathbb{R}$ and $\theta \in(0,1)$, we have
		\begin{equation}\label{eq:A.12}
			\tag{A.12}
			\frac{f(x_{1})-\theta f(x_{2})}{1-\theta}\leq 4{k}\left| \frac{x_{1}-\theta x_{2}}{1-\theta}\right|+4{k}a+11\phi(a)+\phi(|x_{2}|).
		\end{equation}
	\end{lemma}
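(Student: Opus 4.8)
The plan is to reduce the two-variable $\theta$-difference to a single scalar estimate by inserting the intermediate point $f(\theta x_2)$. Writing
\[
\frac{f(x_1)-\theta f(x_2)}{1-\theta}=\frac{f(x_1)-f(\theta x_2)}{1-\theta}+\frac{f(\theta x_2)-\theta f(x_2)}{1-\theta},
\]
I would control the first summand directly by the global Lipschitz hypothesis (i): since $x_1-\theta x_2=(1-\theta)\frac{x_1-\theta x_2}{1-\theta}$, one gets $\frac{f(x_1)-f(\theta x_2)}{1-\theta}\le k\left|\frac{x_1-\theta x_2}{1-\theta}\right|$. For the second summand, I would peel off the factor $f(x_2)$ via $f(\theta x_2)-\theta f(x_2)=\bigl(f(\theta x_2)-f(x_2)\bigr)+(1-\theta)f(x_2)$, so that $\frac{f(\theta x_2)-\theta f(x_2)}{1-\theta}=\frac{f(\theta x_2)-f(x_2)}{1-\theta}+f(x_2)$, where the last term $f(x_2)\le|f(x_2)|\le\phi(|x_2|)$ supplies exactly the $\phi(|x_2|)$ contribution. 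Everything then comes down to proving the one-variable estimate $\frac{f(\theta x_2)-f(x_2)}{1-\theta}\le ka$.

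The heart of the argument is this last estimate, and here the monotonicity hypothesis (ii) is essential, since the crude global Lipschitz bound would only yield $k|x_2|$, which is far too large. Writing $w:=x_2$ and noting that $\theta w$ always lies between $0$ and $w$, I would split into cases according to the positions of $w$ and $\theta w$. When $|w|\le a$, both $w$ and $\theta w$ lie in $[-a,a]$, and Lipschitz continuity together with $|w|\le a$ gives $\frac{f(\theta w)-f(w)}{1-\theta}\le k|w|\le ka$. When $w>a$ with $\theta w\ge a$ (resp. $w<-a$ with $\theta w\le -a$), both points lie in the region where $f$ is increasing (resp. decreasing), and monotonicity forces $f(\theta w)-f(w)\le 0$. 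The only genuinely mixed cases are $w>a>\theta w$ and $w<-a<\theta w$; there I would insert the boundary value $f(a)$ (resp. $f(-a)$), use monotonicity to obtain $f(a)-f(w)\le 0$ (resp. $f(-a)-f(w)\le 0$), and estimate $f(\theta w)-f(a)\le k(a-\theta w)$ with the elementary inequality $a-\theta w<(1-\theta)a$ (valid because $w>a$ gives $\theta w>\theta a$), so that dividing by $1-\theta$ again produces a contribution $\le ka$. In every case one arrives at $\frac{f(\theta x_2)-f(x_2)}{1-\theta}\le ka$.

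Summing the two summands yields the sharper bound $\frac{f(x_1)-\theta f(x_2)}{1-\theta}\le k\left|\frac{x_1-\theta x_2}{1-\theta}\right|+ka+\phi(|x_2|)$, which immediately implies the claimed inequality (A.12); the larger constants $4k$, $4ka$ and the additional $11\phi(a)$ there simply leave room, reflecting that global Lipschitz continuity makes the auxiliary convexification employed in Lemma \ref{Le:4.5} unnecessary here. The main obstacle is purely the book-keeping in the two mixed cases $w>a>\theta w$ and $w<-a<\theta w$, where one must combine monotonicity on the outer ray with the Lipschitz estimate near the boundary and verify that the quotient $\frac{a-\theta w}{1-\theta}$ (resp. $\frac{\theta w+a}{1-\theta}$) stays bounded by $a$; once that elementary inequality is secured, the remainder is routine.
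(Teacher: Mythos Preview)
Your argument is correct and in fact yields the sharper bound
\[
\frac{f(x_{1})-\theta f(x_{2})}{1-\theta}\le k\left|\frac{x_{1}-\theta x_{2}}{1-\theta}\right|+ka+\phi(|x_{2}|),
\]
from which (A.12) follows trivially. The case analysis for the key estimate $\frac{f(\theta w)-f(w)}{1-\theta}\le ka$ is clean and complete; in particular, in the mixed cases your observation that $w>a$ forces $a-\theta w<(1-\theta)a$ (and symmetrically for $w<-a$) is exactly what is needed.

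Your route is genuinely different from, and more elementary than, the paper's. The paper proceeds in three steps: it first constructs an auxiliary function $g$ that coincides with $f$ outside $[-a,a]$ but is piecewise linear on $[-a,a]$ so as to be globally monotone on each half-line $(-\infty,0]$ and $[0,+\infty)$; it then proves the $\theta$-difference estimate for $g$ via a six-case analysis based on the signs and relative order of $x_{1},x_{2}$; finally it handles the compactly supported remainder $h=f-g$ separately by a $\theta\le 1/2$ versus $\theta>1/2$ split. Your insertion of the intermediate value $f(\theta x_{2})$ bypasses this machinery entirely: because global Lipschitz continuity already controls $f(x_{1})-f(\theta x_{2})$, and because $\theta x_{2}$ always lies between $0$ and $x_{2}$, the monotonicity hypothesis on the outer rays is immediately usable without any preliminary surgery on $f$. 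The paper's construction parallels the one genuinely required in the companion Lemma~A.2 (where only piecewise convexity is assumed and no global Lipschitz bound is available), and it appears the authors carried that template over here even though, as you rightly note, the stronger hypothesis makes it unnecessary.
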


	\begin{proof}  Without lost of generality, we can assume that $a>0$ and $f(a)<f(-a)$. The proof is divided into the following three steps.
		
		{\bf  Step 1.} We construct a  continuous function $g(x)$ such that
		
		(1)$~ g(x)=f(x)$, $\forall~ x\in (-\infty,-a]\cup [a,+\infty)$;
		
		(2)$~ g(x)$ decreases  on $\left(-\infty,0\right]$ and increases  on $\left[0,+\infty\right)$.\\
		In fact, we can  define $g(x)$ as follows:

		$$
		g(x)=\begin{cases}
			f(x),& x \in (-\infty,-a]; \\
			-k_{0}(x+a)+f(-a),& x\in (-a,0];\\
			f(a), &x\in [0,a);\\
			f(x), & x\in [a,+\infty).
		\end{cases}
		$$	
		where, in view of (i),
		$$
		k_{0}:=\left|\frac{f(a)-f(-a)}{a}\right|\leq 2k.
		$$
		\begin{figure}[H]
			\centering
			\includegraphics[width=13cm,height=7cm]{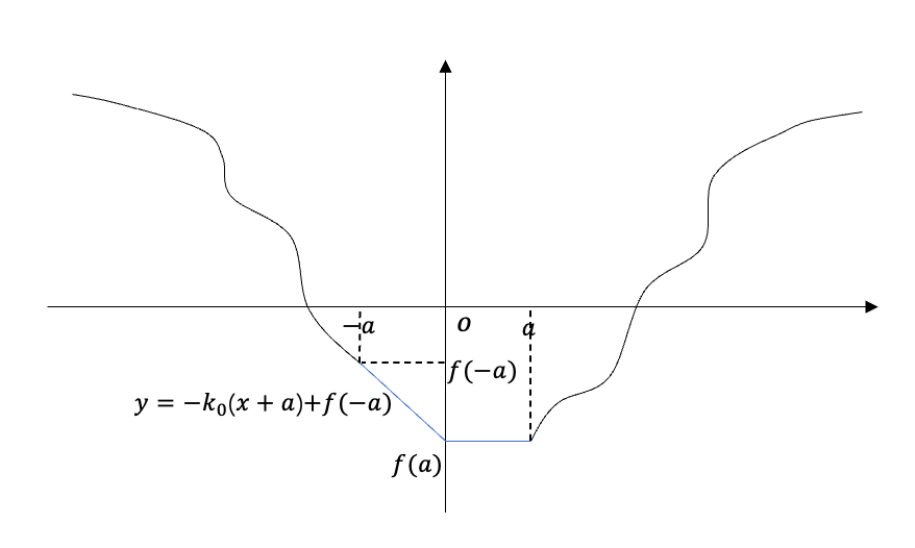}
			\caption{a graph of  $g(x)$}
			\label{5}
		\end{figure}
		\noindent We note that $g(x)$ satisfies (i) with  Lipschitz constant $2k$, i.e.,
		\begin{equation}\label{eq:A.13}
			\tag{A.13}
			\forall~ (x_{1},x_{2})\in \mathbb{R}\times \mathbb{R},~~|g(x_{1})-g(x_{2})|\leq 2k|x_{1}-x_{2}|.
		\end{equation}
		By the definition of $g(x)$, we get
		\begin{equation}\label{eq:A.14}
			\tag{A.14}
		\forall~ x\in \mathbb{R},~~~g(x)\leq f(x)+\sup\limits_{[-a,a]}|f(x)|\leq \phi(|x|)+\phi(a).
	\end{equation}

		{\bf  Step 2.}   We prove that $g(x)$ satisfies the following inequality
		\begin{equation}\label{eq:A.15}
			\tag{A.15}
			\frac{g(x_{1})-\theta g( x_{2})}{1-\theta}\leq 2k\left|\frac{x_{1}-\theta x_{2}}{1-\theta} \right|  +\phi(|x_{2}|)+\phi(a).
		\end{equation}
		Given arbitrarily $(x_{1},x_{2})\in \mathbb{R}\times \mathbb{R}$ and $\theta\in(0,1)$.  We need to consider the following six cases: $x_{1}\geq x_{2}\geq 0$, $x_{1}\leq x_{2}\leq0$,  $x_{2}\geq x_{1}\geq 0$,  $x_{2}\leq x_{1}\leq0$,    $x_{1}\leq 0\leq x_{2}$ and $x_{2}\leq 0\leq x_{1}$.

		{\bf (a) The case of $x_{1}\geq x_{2}\geq 0$:} By inequality (\ref{eq:A.13}) and (\ref{eq:A.14}), we have
		$$
		\begin{aligned}
			\frac{g(x_{1})-\theta g( x_{2})}{1-\theta}&=\frac{g(x_{1})- g( x_{2})}{1-\theta}+g(x_{2})\leq 2k\left|\frac{x_{1}-x_{2}}{1-\theta}\right|+g(x_{2})\\
			&= 2k\frac{x_{1}-\theta x_{2}+\theta x_{2}-x_{2}}{1-\theta} +g(x_{2})= 2k\frac{x_{1}-\theta x_{2}}{1-\theta}-2kx_{2} +g(x_{2})\\
			&\leq 2k\left|\frac{x_{1}-\theta x_{2}}{1-\theta} \right|  +g(x_{2})\leq 2k\left|\frac{x_{1}-\theta x_{2}}{1-\theta} \right|  +\phi(|x_{2}|)+\phi(a).
		\end{aligned}
		$$

		{\bf (b) The case of $x_{1}\leq x_{2}\leq0$:} On account of inequality (\ref{eq:A.13}) and (\ref{eq:A.14}), we have
		$$
		\begin{aligned}
			\frac{g(x_{1})-\theta g( x_{2})}{1-\theta}&=\frac{g(x_{1})- g( x_{2})}{1-\theta}+g(x_{2})\leq 2k\left|\frac{x_{1}-x_{2}}{1-\theta} \right|+g(x_{2})\\
			&= 2k\frac{-x_{1}+\theta x_{2}-\theta x_{2}+x_{2}}{1-\theta} +g(x_{2})= 2k\frac{-x_{1}+\theta x_{2}}{1-\theta}+2kx_{2} +g(x_{2})\\
			&\leq 2k\left|\frac{x_{1}-\theta x_{2}}{1-\theta} \right|+g(x_{2})\leq 2k\left|\frac{x_{1}-\theta x_{2}}{1-\theta} \right|  +\phi(|x_{2}|)+\phi(a).
		\end{aligned}
		$$
		
			{\bf (c) The case of $x_{2}\geq x_{1}\geq 0$:} In view of (2), we have
		$$
		\frac{g(x_{1})-\theta g( x_{2})}{1-\theta}=\frac{g(x_{1})- g( x_{2})}{1-\theta}+g(x_{2})\leq g(x_{2})\leq \phi(|x_{2}|)+\phi(a).
		$$
		
			{\bf (d) The case of $x_{2}\leq x_{1}\leq0$:} Using (2), we have
		$$
		\frac{g(x_{1})-\theta g( x_{2})}{1-\theta}=\frac{g(x_{1})- g( x_{2})}{1-\theta}+g(x_{2})\leq  g(x_{2})\leq \phi(|x_{2}|)+\phi(a).
		$$
		
		{\bf (e)	The case of $x_{1}\leq 0\leq x_{2}$:} It follows from $(c)$  that
		$$
		\frac{g(0)-\theta g( x_{2})}{1-\theta}
		\leq \phi(|x_{2}|)+\phi(a).
		$$
		Then, combining inequality (\ref{eq:A.13}) and the last inequality, we have
		$$
		\begin{aligned}
			\frac{g(x_{1})-\theta g( x_{2})}{1-\theta}&=\frac{g(x_{1})-g(0) }{1-\theta}+\frac{g(0)-\theta g(x_{2})}{1-\theta}\\
			&\leq 2k\left|\frac{x_{1}}{1-\theta}\right| +\phi(|x_{2}|)+\phi(a)\leq 2k\left|\frac{x_{1}-\theta x_{2}}{1-\theta} \right|+\phi(|x_{2}|)+\phi(a).\\
		\end{aligned}
		$$
		
		{\bf (f)	The case of $x_{2}\leq 0\leq x_{1}$:} According to $(d)$, it follows  that
		$$
		\frac{g(0)-\theta g(x_{2})}{1-\theta}
		\leq \phi(|x_{2}|)+\phi(a).
		$$
		Then, using inequality (\ref{eq:A.13}) and the above inequality, we have
		$$
		\begin{aligned}
			\frac{g(x_{1})-\theta g( x_{2})}{1-\theta}&=\frac{g(x_{1})-g(0) }{1-\theta}+\frac{g(0)-\theta g(x_{2})}{1-\theta}\\
			&\leq 2k\left|\frac{x_{1}}{1-\theta}\right| +\phi(|x_{2}|)+\phi(a)\leq 2k\left|\frac{x_{1}-\theta x_{2}}{1-\theta} \right|+\phi(|x_{2}|)+\phi(a).\\
		\end{aligned}
		$$
		All in all, the inequality (\ref{eq:A.15}) holds.

		{\bf Step 3.}  We prove that $f(x)$  satisfies inequality (\ref{eq:A.12}). Set $h(x):=f(x)-g(x)$. Then $h(x)$ is a Lipschitz continuous function with   Lipschitz constant $2k$, and  $h(x)\equiv 0$ when $|x|\geq a$. Note that $h(x)$ has a bound $M:=2\sup\limits_{[-a,a]}|f(x)|\leq 2\phi(a)$,  for each $(x_{1},x_{2})\in\mathbb{R} \times \mathbb{R}$ and $\theta\in (0,1)$, we have
		$$
		\begin{aligned}
			|h(\theta x_{2})-h(x_{2})|&=|h(\theta x_{2})-h(x_{2})|{\bf{1}}_{\left\{\theta\in\left(0,1/2\right]\right\}}+|h(\theta x_{2})-h(x_{2})|{\bf{1}}_{\left\{\theta\in\left(1/2,1\right)\right\}}\\
			&\leq (1-\theta)\frac{2M}{1-\theta}{\bf{1}}_{\left\{\theta\in\left(0,1/2\right]\right\}}+2(1-\theta)k|x_{2}|{\bf{1}}_{\left\{|x_{2}|\leq 2a\right\}}{\bf{1}}_{\left\{\theta\in\left(1/2,1\right)\right\}}\\
			&\leq (1-\theta)(4M+4ka).
		\end{aligned}
		$$
		Then,
		\begin{equation}\label{eq:A.16}
			\tag{A.16}
			\begin{aligned}
				\frac{h(x_{1})-\theta h(x_{2})}{1-\theta}&\leq\frac{|h(x_{1})- h(\theta x_{2})|}{1-\theta}+\frac{|h(\theta x_{2})- h( x_{2})|}{1-\theta}+|h(x_{2})|\\
				&\leq 2k \left|\frac{x_{1}-\theta x_{2}}{1-\theta}\right|+5M+4ka
				\leq 2k \left|\frac{x_{1}-\theta x_{2}}{1-\theta}\right|+4ka+10\phi(a).
			\end{aligned}
		\end{equation}
		Finally, in view of $f(x)=g(x)+h(x)$, inequalities (\ref{eq:A.15}) and  (\ref{eq:A.16}), we deduce that
		$$
		\begin{aligned}
			\frac{f(x_{1})-\theta f(x_{2})}{1-\theta}&=\frac{g(x_{1})-\theta g(x_{2})}{1-\theta}+\frac{h(x_{1})-\theta h(x_{2})}{1-\theta}\\
			&\leq 4k\left|\frac{x_{1}-\theta x_{2}}{1-\theta}\right|+4ka+11\phi(a)+\phi(|x_{2}|).
		\end{aligned}
		$$
		Therefore, the inequality (\ref{eq:A.12}) holds.
	\end{proof}

  \section*{Declarations}

    \noindent\textbf{Conflict of interest~} The authors declare that they have no known competing financial interests or personal relationships that could have appeared to influence the work reported in this paper.

    \noindent\textbf{Data Availability~} Data sharing is not applicable to this article as no new data were generated or analyzed in this study.

	
	\bigskip
	
	\end{CJK}
\end{document}